\newcommand{\cmark}{\ding{51}}%
\newcommand{\xmark}{\ding{55}}%
\newcolumntype{M}[1]{>{\centering\arraybackslash}m{#1}}
\definecolor{Gray}{gray}{0.7}
\pgfplotsset{compat=1.18}
\newcommand{\suchthat}{\;\ifnum\currentgrouptype=16 \middle\fi|\;}
\DeclareMathOperator*{\argmin}{arg\,min}
\newtheorem{assumption}{Assumption}
\newtheorem*{theorem*}{Theorem}
\newtheorem*{proposition*}{Proposition}
\newtheorem*{lemma*}{Lemma}
\title[Online Markov Decision Processes with Terminal Law Constraints]{Online Markov Decision Processes with Terminal Law Constraints}
\begin{document}
% Use to hide author names
%\texttt{\textbackslash documentclass[anon]\{alt2026\}}

\maketitle

\begin{abstract}%
Traditional reinforcement learning usually assumes either episodic interactions with resets or continuous operation to minimize average or cumulative loss. While episodic settings have many theoretical results, resets are often unrealistic in practice. The infinite-horizon setting avoids this issue but lacks non-asymptotic guarantees in online scenarios with unknown dynamics. In this work, we move towards closing this gap by introducing a reset-free framework called the \emph{periodic} framework, where the goal is to find \emph{periodic policies}: policies that not only minimize cumulative loss but also return the agents to their initial state distribution after a fixed number of steps. We formalize the problem of finding optimal periodic policies and identify sufficient conditions under which it is well-defined for tabular Markov decision processes. To evaluate algorithms in this framework, we introduce the \emph{periodic regret}, a measure that balances cumulative loss with the terminal law constraint. We then propose the first algorithms for computing periodic policies in two multi-agent settings and show they achieve sublinear periodic regret of order $\tilde{O}(T^{3/4})$. This provides the first non-asymptotic guarantees for reset-free learning in the setting of $M$ homogeneous agents, for $M > 1$.
\end{abstract}

\begin{keywords}%
 Online learning, Markov decision processes, infinite-horizon, terminal law constraint.
\end{keywords}

\section{Introduction}\label{sec:introduction}

Markov Decision Processes (MDPs) have long been used to model Reinforcement Learning (RL) problems, where one or more agents interact with an environment by taking sequential actions, leading to state transitions and observed losses. Typically, the agents interact with the environment over episodes of fixed length or until a termination condition is met, after which they are reset to an initial state drawn from a fixed initial distribution, which is known as the episodic RL setting \citep{Rosenberg2019, jin_bandit, luo_dilated_bonus}. Alternatively, the agents may operate continuously without resets, and the learner aims to minimize either the long-term average loss (infinite-horizon average-reward setting \citep{UCRL-2}) or the total loss over a specified number of steps (infinite-horizon setting \citep{neu_2010}). The episodic framework for online MDPs is well studied, with many algorithms giving theoretical guarantees for adversarial losses and unknown dynamics, but with the disadvantage of requiring agents to be restarted. On the other hand, the infinite-horizon framework is more realistic but more difficult to solve, as finite-time guarantees for online tasks have only been shown when the dynamics are known \citep{dai_ftrl}.

In an effort to bridge the gap between the well-studied episodic setting and the more realistic but less theoretically developed infinite-horizon setting, we introduce the \emph{periodic} setting, a reset-free framework where the goal is to learn a policy that minimizes the total loss while ensuring that, after a predefined period of time steps, the agents return to a state distribution equal to their initial distribution. We refer to a policy that satisfies this condition (returning the agents to their initial distribution after a fixed period) as a \emph{periodic policy} (formally defined in Sec.~\ref{sec:learning_problem}). We call each such period an episode, though unlike in episodic RL, there are no resets: learning is continuous, as in infinite-horizon RL, but with the requirement of returning to the initial distribution. This setting allows us to establish the first non-asymptotic guarantees for learning without resets in the case of $M$ homogeneous agents with $M > 1$ (see Table~\ref{table:comparisons} for a comparison with existing approaches).

\textbf{Motivations.} The reset-free periodic framework provides a practical approximation to infinite-horizon problems by considering a restricted, more tractable class of policies, which in turn allows finite-time guarantees in online settings with unknown dynamics. An example of a real-world problem that fits this model is that of energy demand control in smart grids. As storing energy is challenging, thermostatically controlled electrical devices, such as water-heaters, can adjust their consumption to help balance supply and demand \citep{marinmoreno_energy}. For that, an electricity utility sends signals through smart meters to control devices’ On-Off states so that their average consumption follows a target profile consumption, with each device’s state defined by its temperature and On-Off status. Daily control is needed, but episodic algorithms fail because devices cannot be reset to the same temperature distribution each day, while infinite-horizon methods fail to handle adversarial losses or unknown transitions caused by supply fluctuations and unknown user behavior. A periodic framework is better suited: devices naturally follow daily cycles, and optimizing over periodic policies enables infinite-horizon control without resets. The periodic framework also provides a natural model for tasks that inherently exhibit periodicity. Consider, for instance, autonomous robots operating in a warehouse, tasked with transporting and sorting goods. Given a list of assignments, their goal is to maximize task completion within a limited time while ensuring they autonomously return to their charging station at the end. The robot's state refers to their position, and all other  factors are captured within the objective function. 

\textbf{Challenges.} The main challenge of finding the optimal periodic policy arises when the environment dynamics are unknown, forcing the learner to rely on estimates to compute a policy. As a result, the learner cannot guarantee that the computed policy is truly periodic, \emph{i.e.}, if it returns the agents to their initial distribution in the true environment. This can cause the agents to drift gradually over successive episodes, potentially preventing a return to the initial distribution. 
\begin{table*}
  \begin{center}
    \resizebox{1\textwidth}{!}{
  \begin{tabular}{|Sc|Sc|Sc|Sc|Sc|}
    \cline{1-5}
   Framework &Resets  &Transition & Algorithm & Regret \\ \cline{1-5}
     Episodic  &  \multirow{2}{1em}{\cmark} & \multirow{1}{2em}{known} & \cite{neu_2013}  & $\widetilde{O}\big(N \sqrt{T}\big)$  \\\cline{3-5} 
                                        & & \multirow{2}{4em}{unknown} & \cite{neu12}  & $\widetilde{O}\big(N^2 |\mathcal{X}| |\mathcal{A}| \sqrt{T}\big)$  \\ 
                                                                        & & & \cite{Rosenberg2019}&  $\widetilde{O}\big(N^2 |\mathcal{X}| \sqrt{|\mathcal{A}| T} \big)$ \\
                                                                        \cline{1-5} 
                                                                       
     Infinite-horizon & \multirow{2}{1em}{\xmark} & \multirow{4}{2em}{known}  & \cite{even-dar}  & (ergodic) $\widetilde{O}\big(\tau^2 \sqrt{T}\big)$ \\ 
                                                                     & & & \cite{communicating} & (commu.+ ineff.) $\widetilde{O}\big(D^2 \sqrt{|\mathcal{X}| T}\big)$ \\
                                                                      & & & \cite{dai_ftrl} & (commu. + eff.) $\widetilde{O}\big(|\mathcal{A}|^{1/2} (|\mathcal{X}| D)^{2/3} T^{5/6}\big)$ \\
                                                                       & & & \cite{zhao} & (ergodic) $\widetilde{O}\big(\sqrt{\tau T}\big)$ \\  \cline{3-5} 
                                                    & & unknown & None  &  \\ 
                                                     \cline{1-5} 
       \textbf{Periodic (this work)}  & \xmark & \multirow{2}{4em}{unknown} & \textbf{MDPP-K} (\textbf{Alg.}~\ref{alg:main1}) & (Ass.~\ref{ass:contraction}) $\widetilde{O}\big((1-\alpha)^{-1/2}N^3 |\mathcal{A}|^{1/4} |\mathcal{X}|^{5/4} T^{3/4} \big)  $ \\
                                              & & & \textbf{MDPP-U} (\textbf{Alg.}~\ref{alg:main2}) & (Ass.~\ref{ass:ergodicity})  $\widetilde{O}\big((1-\alpha)^{-1}N^3 |\mathcal{A}|^{1/4} |\mathcal{X}|^{5/4} T^{3/4} \big)  $ \\
    \cline{1-5}
  \end{tabular}
  }
      \caption{Online MDP frameworks with theoretical guarantees under adversarial losses and stochastic transitions; all notations are in App.~\ref{notations}, $\tau = -(\log(\alpha))^{-1}$, $D$ is the MDP diameter, `commu.' denotes a communicating MDP, `ineff.' an inefficient algorithm, and `eff.' an efficient one.}
          \label{table:comparisons}
  \end{center}
\end{table*}

\textbf{Outline and contributions.}  In this paper, we introduce and study the problem of computing periodic policies in online MDPs with unknown transitions and adversarial loss functions, \emph{i.e.}, that can vary arbitrarily across episodes and are unknown to the learner in advance \citep{even-dar}. In Sec.~\ref{sec:learning_problem} we formally define the problem of finding an optimal periodic policy and identify sufficient conditions on the MDP for this problem to be well-posed. To evaluate the performance of algorithms designed for this setting, we introduce a novel metric, the \emph{periodic regret}. In Sec.~\ref{sec:algorithm} and~\ref{sec:algorithm_unknown_rhot} we design algorithms for computing periodic policies in environments with unknown dynamics and adversarial losses, considering the case of $M > 1$ agents under two distinct settings, each based on different underlying assumptions. We show both cases achieve a periodic regret of order $\smash{\widetilde{O}(T^{3/4})}$ while having low computational complexity. We present empirical results in Sec.~\ref{sec:experiments}. 

\section{The learning problem}\label{sec:learning_problem}

Our first contribution is to formalize the task of finding an optimal periodic policy and to establish sufficient conditions on the MDP under which this problem is well-defined. To address a broader range of tasks, such as those explored in our experiments (Sec.~\ref{sec:experiments}), we formulate the problem within the convex RL framework \citep{hazan2019, geist2022}, meaning that we consider any convex loss over the state-action distribution induced by the agent’s policy, rather than linear losses (classic RL). \textbf{Notations:} For any finite set $\mathcal{S}$, we denote by $\Delta_\mathcal{S}$ the simplex induced by this set, and by $|\mathcal{S}|$ its cardinality. For all $d \in \mathbb{N}$, let $[d] := \{1, \ldots, d\}$. Let $\|\cdot\|_1$ be the $L_1$ norm, and for all $v := (v_n)_{n \in [N]}$, such that $\smash{v_n \in \mathbb{R}^{|\mathcal{X}| |\mathcal{A}|}}$, we set $\|v\|_{\infty,1} := \sup_{n \in [N]} \|v_n\|_1.$ We denote the index of an episode by $t$, and a time step within an episode by $n$. A list of notations is in App.~\ref{notations}.

\subsection{Preliminaries: finite-horizon (convex) MDP} 

For clarity, we define here the structure of a Markov decision process (MDP) within an episode without the notation $t$. We define a MDP with finite state and action spaces $\mathcal{X}$ and $\mathcal{A}$, a horizon of length $N$, an initial state-action distribution $\rho \in \Delta_{\mathcal{X} \times \mathcal{A}}$, and a time-dependent dynamics $p := (p_n)_{n \in [N]}$, with $p_n : \mathcal{X} \times \mathcal{A} \times \mathcal{X} \rightarrow [0,1]$ the probability transition function at time step $n$ where $p_n(x'|x,a)$ denotes the probability that an agent moves to state $x'$ when performing action $a$ at state $x$ at time step $n$. The agent chooses an action at time step $n$ by sampling from a non-stationary random Markov policy $\pi := (\pi_n)_{n \in [N]}$, with $\pi_n : \mathcal{X} \rightarrow \Delta_\mathcal{A}$, and $a_n \sim \pi_n(\cdot|x_n) $. We let $\smash{\Pi := (\Delta_\mathcal{A})^{\mathcal{X} \times N}}$ denote the space of policies. We define the state-action distribution sequence $\mu^{\pi, \rho} := (\mu_n^{\pi, \rho})_{n \in [N]}$ induced by playing the policy $\pi$ and starting at the initial distribution $\rho$ recursively through the following forward equation for all $(x,a) \in \mathcal{X} \times \mathcal{A}$, with $\mu_0^{\pi, \rho}(x,a) := \rho(x,a)$, and for all $n \in [N]$,
\begin{equation}\label{mu_induced_pi}
\textstyle{\mu_{n+1}^{\pi, \rho}(x,a) := \sum_{x',a'} \mu_n^{\pi, \rho}(x',a') p_{n+1}(x|x', a') \pi_{n+1}(a|x).}
\end{equation}
%The state-action distribution is often called the occupancy-measure in the literature and satisfies ${\mu_n^{\pi, \rho}(x,a) = \mathbb{E}[ \mathds{1}_{\{x_n = x, a_n = a \}} | (x_0, a_0) \sim \rho ; \pi ; p]}$.
% There are many possible objective functions that can be considered in finite-horizon MDPs. Reinforcement learning \cite{RL, bertsekas2019reinforcement} considers a loss sequence $\ell := (\ell_n)_{n \in [N]}$, with ${\ell_n : \mathcal{X} \times \mathcal{A} \rightarrow [0,1]}$, and $\ell_n(x,a)$ representing the loss at time $n$ for action $a$ in state $x$. The goal of the learner is then to find a policy $\pi$ that minimizes the expected sum of losses, ${\langle \ell, \mu^{\pi,\rho} \rangle := \mathbb{E}_{(x_n,a_n) \sim \mu_n^{\pi, \rho}} \left[ \sum_{n=1}^N \ell_n(x_n,a_n) \right]}$. In target tracking, a problem often common on energy management tasks \cite{busic_kl, marinmoreno_energy}, a large population of agents follows the MDP dynamics. The goal of the learner is to find a policy $\pi$ inducing the agents to be distributed as close as possible to a certain target distribution $\gamma := (\gamma_n)_{n \in [N]}$, with $\gamma_n \in \Delta_{\mathcal{X} \times \mathcal{A}}$. At time step $n$, the loss function suffered by the control unit is often quadratic, i.e., $(\mu_n^{\pi, \rho} - \gamma_n)^2$, or a Kullback-Leibler divergence. 

At each time step, the learner incurs a loss $f_n(\mu_n^{\pi, \rho})$, where $f_n : \mathbb{R}^{\mathcal{X} \times \mathcal{A}} \rightarrow \mathbb{R}$ is any convex and $\ell$-Lipschitz function with respect to the norm $\|\cdot\|_1$ in the state-action distribution. The learner's objective is then to find a policy $\pi$ that minimizes the loss $\smash{F(\mu^{\pi, \rho}) := \sum_{n=1}^N f_n(\mu_n^{\pi, \rho})}$. This formulation is known as the convex RL problem \citep{pmlr-v238-moreno24a}. In the case where $f_n(\mu) = \langle \ell_n, \mu_n \rangle$ for $\ell_n: \mathcal{X} \times \mathcal{A} \rightarrow \mathbb{R}$ a loss function, we retrieve the classic RL case \citep{neu_2013}. Considering a general convex function over the state-action distribution instead of a linear one increases the range of applicable scenarios, for example the energy optimization task discussed in the previous section \citep{busic_energy_2023,marinmoreno_energy} where the loss is typically quadratic on the distribution, as well as the experiments in Sec.~\ref{sec:experiments}, where we compare our methods with episodic RL approaches by incorporating a regularizer into the loss function of the episodic algorithm, thereby breaking the linearity of the RL problem.

Note that $F$ is not convex on the policy $\pi$. To convexify the problem we introduce the set of sequences of state-action distributions initialized with $\rho \in \Delta_{\mathcal{X} \times \mathcal{A}}$ and satisfying the dynamics $p$:
\begin{equation}\label{eq:bellman_flow}
\textstyle{\mathcal{M}^p_\rho := \Big\{\mu := (\mu_n)_{n \in [N]} \; \Big| \; \mu_0 = \rho \text{ and } \sum_{a} \mu_{n+1}(x,a) := \sum_{x',a'} \mu_n(x',a') p_{n+1}(x|x', a') \;   \Big\}.}
\end{equation}
Since $\mathcal{M}_\rho^p$ is a set of affine constraints, it is convex \citep{puterman2014markov}. For any $\mu \in \mathcal{M}_\rho^p$, there is a policy $\pi$ such that $\mu^{\pi, \rho} = \mu$. It suffices to take $\pi_n(a|x) =\mu_n(x,a)/\sum_{a'} \mu_n(x,a')$, if the denominator is non-zero, and arbitrarily defined if not. Thus, finding an optimal policy minimizing $F(\mu^{\pi, \rho})$ (a non-convex problem) is equivalent to minimizing $F(\mu)$ over $\mathcal{M}_\rho^p$ (a convex problem).

\subsection{Periodic policies}
For $n \in [N]$, let $y_n := (x_n,a_n)$, and for all $\pi \in \Pi$, let $P^\pi_n(y_{n-1}, y_n) := p_n(x_n|y_{n-1}) \pi_n(a_n|x_n)$ be the probability to move from $y_{n-1}$ to $y_n$ when following $\pi$. We introduce for all $y_0, y_N\in \mathcal{X} \times \mathcal{A}$,
\begin{equation}\label{eq:markov_chain_proba}
    \begin{split}
\textstyle{        P_\pi(y_0, y_N) := \sum_{y_1}P^\pi_1(y_0, y_1) \ldots \sum_{y_{N-1}} P^\pi_{N-1}(y_{N-2}, y_{N-1}) P^\pi_N(y_{N-1}, y_N),}
    \end{split}
\end{equation}
as the probability to move from the initial state $y_0$ to the final state $y_N$. Interpreting \( P_\pi \) as a \( |\mathcal{X}| |\mathcal{A}| \times |\mathcal{X}| |\mathcal{A}| \) matrix, for any \(\nu \in \mathbb{R}^{|\mathcal{X}|  |\mathcal{A}|}\), we define \(\nu P_\pi\) as the vector satisfying for each \((x,a)\),  $\nu P_\pi(x,a) = \sum_{x',a'} \nu(x',a') P_\pi(x',a'; x,a).$

We consider agents that interact repeatedly with the finite-horizon MDP over a sequence of $T$ episodes by playing a sequence of policies $(\pi_t)_{t \in [T]}$, with each $\pi_t := (\pi_{t,n})_{n \in [N]}$ a policy over the horizon $N$. At the first episode, the agents are initialized following the initial state-action distribution $\rho$. In episodic settings, the system restarts the agents at the distribution $\rho$ after the end of each episode. However, in real-world, resets are often prohibitive. Instead, we consider that the system never resets to the distribution $\rho$. Thus, at episode $t+1$, the initial distribution, that we denote by $\rho_{t+1}$, is equal to the final distribution on the previous episode, \emph{i.e.}, $\rho_{t+1} := \rho_{t} P_{\pi_t} = \mu^{\pi_t, \rho_t}_N$. 
\begin{definition}[Periodic policy]\label{def:periodic_policy}
    We call a policy $\pi := (\pi_n)_{n \in [N]}$ periodic in the MDP with initial state-action distribution $\rho$, if $\rho P_\pi = \rho$. 
\end{definition}

The set of periodic policies can be seen as the set of policies whose stationary distribution under the MDP transition is \(\rho\). Note that for a sequence of periodic policies, \(\rho_t = \rho\) for all \(t \in [T]\). In this paper, we study the online learning problem of computing a sequence of approximately \emph{periodic} policies \smash{\((\pi_t)_{t \in [T]}\)} over \(T\) episodes, aiming to minimize the total loss \smash{\(\sum_{t=1}^T F_t(\mu^{\pi_t, \rho})\)}. The loss functions \smash{\((F_t)_{t \in [T]}\)} are an arbitrary sequence of convex losses over the state-action distribution and are unknown to the learner in advance. At the end of each episode \(t\), the loss function \(F_t\) is fully revealed to the learner (full-information setting). We also assume the probability kernel $p$ is unknown and must be learned. This problem is well-defined only under the following assumption:  
\begin{assumption}\label{ass:feasibility}
    The set of periodic policies of the MDP with initial distribution $\rho$ is non-empty. %, \emph{i.e.}, there is $\pi$ such that $\rho P_\pi = \rho$. 
\end{assumption}

Although Ass.~\ref{ass:feasibility} cannot be verified for an arbitrary $\rho$, one can construct a suitable $\rho$ that satisfies it and use this distribution to run the proposed algorithms. For example, under an ergodicity assumption, running any policy for sufficiently many episodes yields an approximate stationary distribution, for which the set of periodic policies is non-empty. Moreover, some processes are inherently periodic, with the nominal policy naturally returning the state to a stationary distribution. For instance, in the energy demand control example in Sec.~\ref{sec:introduction}, the uncontrolled water temperature distribution in electric water-heaters follows a periodic daily cycle due to the seasonality of user behavior.

Due to uncertainties in the unknown dynamics, the learner cannot guarantee that the computed policy sequence is strictly periodic. As a result, in practice, \(\rho_t\) may deviate from \(\rho\). To justify that we can compute a policy that induces a state-action distribution at the end of an episode close enough to \(\rho\) when starting from \(\rho_t \neq \rho\) (\emph{i.e.}, \(\rho_t P_\pi \approx \rho\)), we impose the following contraction assumption:
% Assuming the existence of a periodic policy is different than assuming the MDP is ergodic. An ergodic MDP means that for any policy $\pi$, the Markov chain with probability transition matrix $P_\pi$ is ergodic, i.e., irreducible and aperiodic (CITE). Such Markov chain converges to the unique stationary distribution independent of the starting state. In the other hand, all we ask is that there exists a policy that induces $\rho$ as the stationary distribution under the Markov chain with transition $P_\pi$.
\begin{assumption}\label{ass:contraction}
    There exists some $0 \leq \alpha < 1$ such that for all periodic policies $\pi$ and distributions $\nu, \nu' \in \Delta_{\mathcal{X} \times \mathcal{A}}$, $\|\nu P_\pi - \nu' P_\pi \|_1 \leq \alpha \|\nu - \nu'\|_1.$
\end{assumption}

Ass.~\ref{ass:contraction} can be understood as requiring that the Markov chain induced by any periodic policy be ergodic, \emph{i.e.}, irreducible and aperiodic \citep{mixing_times}. Consequently, the Markov chain under a periodic policy converges to a unique stationary distribution (Ass.~\ref{ass:contraction}), which in this case is \(\rho\) (Ass.~\ref{ass:feasibility}), independent of the initial state. This type of assumption is common in infinite-horizon MDPs \citep{even-dar,neu_2010,chen2022learninginfinitehorizonaveragerewardmarkov}.

We evaluate the learner's performance through a new regret that we call the \emph{periodic regret}, which compares the learner's total loss to that of any periodic policy \(\pi\) in the MDP with the initial state-action distribution \(\rho\), and also penalizes the distance between the final distribution in episode $t-1$, $\rho_{t}$, for all $t \in \{2, \ldots, T+1\}$, and $\rho$, scaled by a factor of $\gamma > 0$, as follows:
 \begin{equation}\label{eq:periodic_regret}
     \textstyle{R_T(\pi) :=  \sum_{t=1}^T \big[ F_t(\mu^{\pi_t, \rho_t})  - F_t(\mu^{\pi, \rho}) \big] + \gamma \sum_{t=1}^T \|\rho_t - \rho \|_1.}
 \end{equation}

Our aim is to find a near-optimal periodic policy, one that returns the agents to the prescribed initial distribution at the end of each episode while minimizing the cumulative loss. Accordingly, it is natural to define a static regret that compares the learner’s performance to that of the best periodic policy in hindsight, augmented with a penalty term that captures deviations from the desired initial distribution. However, since a non-periodic policy may achieve a lower total loss than the optimal periodic policy, the penalty term must be scaled by a constant $\gamma$.

\subsection{Learning frameworks}

We consider $M > 1$ homogeneous agents and observe an independent trajectory from one of them per episode to estimate the dynamics. This independence is essential for applying standard martingale concentration bounds. Unlike the episodic setting, where restarts ensure independence, here it is obtained through trajectories observed from different agents every episode. A sufficiently large $M$ is needed to guarantee independence across sampled trajectories. When $M=1$, the conditional expectation of the initial distribution inducing the occupancy measure in the current episode given the past trajectories collapses to a Dirac mass at the last observed state at time step $N$, instead of the true initial distribution $\rho_t$ (see the proof of Prop.~\ref{prop:mdp_martingale} for more details). This prevents the use of martingale difference sequence arguments typically employed to control the regret term associated with learning the dynamics. Introducing a second independent agent resolves this issue starting from the second episode, and the same reasoning then applies inductively across episodes.

In practice, under standard ergodicity assumptions, the system can be expected to eventually ``forget'' its past, making it reasonable to reuse agents in the estimation procedure after a finite number of episodes. The multi-agent assumption is therefore justified both as a natural first step toward estimating dynamics in the online infinite-horizon setting, and by the fact that it already encompasses many practical applications, including those discussed in the introduction. We leave the single-agent case for future work.

We examine two frameworks in this paper. In the first, we assume that the learner has access to the initial state-action distribution \(\rho_t\) of the agents at the beginning of each episode \(t\). We introduce an algorithm for this setting in Sec.~\ref{sec:algorithm} and establish its periodic regret guarantee. 
%This is the case, for example, with the energy problem in Sec.~\ref{sec:introduction}:  a large population of electrical appliances is to be controlled, with the controller having access to their initial state every day allowing good estimation of the initial distribution, but having limited access to individual trajectories to preserve privacy.

While observing \(\rho_t\) can be justified for large agent populations, it may not always be feasible in practice. To address this limitation, our second framework eliminates the assumption of direct access to \(\rho_t\). Instead, we introduce an estimated distribution \(\tilde{\rho}_t\) and apply the same algorithm from Sec.~\ref{sec:algorithm}, replacing \(\rho_t\) with its estimate. However, this introduces additional challenges, as errors now accumulate across episodes due to both unknown transition probabilities and unknown initial distributions, motivating the need for additional assumptions. First, that the MDP must satisfy the contraction property of Ass.~\ref{ass:contraction} for all policies \(\pi\), not just periodic ones. Additionally, we assume that one randomly selected agent can be reset at the start of each episode, which we use \emph{exclusively} to estimate the initial distribution $\rho_t$ for all $t \in [T]$. Note that this is still a weaker requirement than resetting all agents to the initial distribution as is done in the episodic case. The assumption is also realistic in certain applications. For example, in the energy demand control scenario from Sec.~\ref{sec:introduction}, some consumers may have special electricity contracts allowing the utility to reset their thermic devices to a desired temperature distribution at the end of each day, whereas resetting all devices for all consumers would be impractical. In Sec.~\ref{sec:algorithm_unknown_rhot}, we provide a detailed discussion of this framework. 

% \begin{algorithm}[ht]
% \caption{Learner's Online Protocol}\label{alg:online_protocol}
% \begin{algorithmic}
% \STATE {\bfseries Input:} initial state-action distribution $\rho$, initial strategy sequence $\pi^1$.
%     \FOR{$t= 1,\ldots,T$}
%     \FOR{$j =1, \ldots, M$}
%     \STATE the $j$-th agent playing episode $t$ starts at $(x_0^{j,t}, a_0^{j,t}) \sim \mu_0(\cdot)$    
%     \FOR{$n = 1, \ldots,N$}
%         \STATE environment draws new state $x_{n}^{j,t} \sim p_{n}(\cdot|x_{n-1}^{j,t}, a_{n-1}^{j,t}) $
%         \STATE agent $j$ chooses an action $a_n^{j,t} \sim \pi^{t}_n(\cdot|x_n^{j,t})$
%     \ENDFOR
%     \ENDFOR
%     \STATE learner computes, for all $n \in [N]$, new estimate $\hat{p}^{t+1}_n$ from data $(o^{j,s})_{s \in [t], j \in [M]}$
%     \STATE objective function $F^t$ is exposed
%     \STATE learner computes $\pi^{t+1}$ using the available information and send to all agents
%     \ENDFOR
% \end{algorithmic}
% \end{algorithm}

\textbf{Online protocol (Alg.~\ref{alg:online_protocol}).} The learning process proceeds in episodes, but unlike episodic RL, the agents are not reset between episodes. We consider \( M > 1 \) independent agents that do no interact with each other. At the start of each episode \( t \), the learner selects a policy \( \pi_t \), sends it to all agents, and then observes the trajectory of an agent uniformly sampled from the $M$ available, independently of previous observations. Using the independent trajectories observed up to episode \( t \), the learner computes \( \hat{p}_{t+1} \), an estimate of the dynamics \( p \). For the first episode, the initial state-action pair for each agent $j \in [M]$, \((x_{1,0}^j, a_{1,0}^j)\), is drawn from the initial distribution \(\rho \sim \Delta_{\mathcal{X} \times \mathcal{A}}\). In subsequent episodes, the initial state-action pair is carried over from the final step of the previous episode: $ (x_{t+1,0}^j, a_{t+1,0}^j) = (x_{t,N}^j, a_{t,N}^j).$ The initial distribution at episode \( t+1 \) evolves according to \( \rho_{t+1} =  \rho_t P_{\pi_t} \). At the end of each episode, the learner observes the full loss function \( F_t \), and computes the next policy $\pi_{t+1}$ using different information depending on the framework, as illustrated in the learner protocol in Alg.~\ref{alg:online_protocol}.
\begin{algorithm}[ht]
\caption{Multi-agent Online Protocol}\label{alg:online_protocol}
\begin{algorithmic}[1]
\STATE {\bfseries Setting:} initial policy $\pi_1$, $M$ agents
   \FOR{$t= 1,\ldots,T$}
     \STATE All agents follow policy $\pi_t$ for an episode
     \STATE Observe the trajectory of an agent independently sampled from the population of $M$ agents
     \STATE Use the trajectory to update the estimate $\hat{p}_{t+1}$ of $p$
   \vspace{-0.4cm}
   \begin{multicols}{2}
   
    \textbf{Framework 1: known $\rho_{t+1}$}
   \STATE Observe $\rho_{t+1}$ and $F_t$
   \STATE Compute $\pi_{t+1}$ using $\rho_{t+1}, F_t$ and $\hat{p}_{t+1}$
\columnbreak

 \textbf{Framework 2: unknown $\rho_{t+1}$}
   \STATE Compute $\tilde{\rho}_{t+1}$ estimate of $\rho_{t+1}$, observes $F_t$
   \STATE Compute $\pi_{t+1}$ using $\tilde{\rho}_{t+1}, F_t$ and $\hat{p}_{t+1}$
   \end{multicols}
      \vspace{-0.4cm}
      \ENDFOR
   \end{algorithmic}
 \end{algorithm}
%\begin{itemize}[noitemsep,topsep=0pt,leftmargin=10pt]
%    \item \textbf{Framework 1: known $\rho_{t+1}$.} The learner has access to \( \rho_{t+1} \) and uses it, along with \( F_t \) and \( \hat{p}_{t+1} \), to compute the next policy \( \pi_{t+1} \).  
%    \item \textbf{Framework 2: unknown $\rho_{t+1}$.} The learner does not observe \( \rho_{t+1} \). Instead, it estimates \( \tilde{\rho}_{t+1} \) using the trajectories of a designated agent that we are allowed to reset in the start of each episode (detailed in Sec.~\ref{sec:algorithm_unknown_rhot}). The next policy \( \pi_{t+1} \) is then computed based on \( \tilde{\rho}_{t+1} \), \( F_t \), and \( \hat{p}_{t+1} \).  
%\end{itemize}
\section{Framework 1: known initial distributions}\label{sec:algorithm}

We first study the case where the learner observes the initial distribution at the start of each episode.
\subsection{Preliminaries}

We aim to formulate the learning problem as the one of finding the optimal policy per episode within a constrained MDP (C-MDP), where the final state-action distribution must satisfy an equality constraint. Leveraging the equivalence between the policy space and the space of state-action distributions satisfying the MDP dynamics as in Eq.~\ref{eq:bellman_flow}, we would like to solve at episode $t$
\begin{equation}\label{offline_problem}
\textstyle{\min_{\mu \in \mathcal{M}_{\rho_t}^p} \quad F_t(\mu), \quad \text{s.t.} \quad \mu_N := \rho_t P_\pi = \rho,}
\end{equation}  
where \( \pi \) is the policy corresponding to the distribution \( \mu \) in the sense of Eq.~\eqref{mu_induced_pi}.

However, in practice, we encounter several challenges when considering Eq.~\eqref{offline_problem}: $(1)$ The sequence of objective functions is adversarial. $(2)$ The probability kernel is unknown.  $(3)$ The problem may be ill-posed because it is not necessarily feasible. Ass.~\ref{ass:feasibility} ensures the existence of a policy satisfying \(\rho P_\pi = \rho\), but due to the uncertainties on the probability kernel, the  distribution \(\rho_t\) at the end of episode \(t-1\) may not exactly match \(\rho\). Moreover, there is no guarantee that there is a policy $\pi$ such that \(\rho_t P_\pi = \rho\).

While numerous works in RL have addressed challenges \(1\) and \(2\) (see Table~\ref{table:comparisons}), they do not consider constrained problems or periodic regret. Although we model our algorithm as a C-MDP, previous works \citep{efroni2020explorationexploitation, qiu2020, castiglioni, ding2020provably, stradi24, muller2024truly} differ on that they focus on either episodic or average-reward infinite-horizon tasks rather than on bounding periodic regret. Some works on the literature of stochastic control investigate constraints on the probability distribution of the terminal state \citep{daudin,bouchard_elie,quantile,pfeiffer_2020,pfeiffer_2021}. However, these studies primarily focus on deriving necessary optimality conditions under known dynamics. As such, they do not address challenge \(1\), nor \(2\). We present the first approach to address the three challenges with provable non-asymptotic guarantees. 

\subsection{Algorithm}\label{subsec:algo}
At each episode, the algorithm uniformly and independently selects one of the $M$ agents, and observe its trajectory that we denote by  $(x_{t,n}, a_{t,n})_{n \in [N]}$. Since agents are not reset between episodes, it is essential to ensure that observations remain independent across episodes. We define $\smash{N_{t,n}(x,a) = \sum_{s=1}^{t-1} \mathds{1}_{\{x_{s,n} = x, a_{s,n} = a\}}}$, $\smash{M_{t,n}(x'|x,a) = \sum_{s=1}^{t-1} \mathds{1}_{\{x_{s,n+1} = x' ,x_{s,n} = x, a_{s,n} = a\}}}$, as the counters of the number of visits of the pair $(x,a)$, and the triple $(x,a,x')$. The learner's estimate for the transition kernel at the end of episode $t-1$, to be used in episode $t$, is 
\begin{equation}\label{eq:proba_est}
\textstyle{    \hat{p}_{t,n+1}(x'|x,a) := \frac{M_{t,n}(x'|x,a)}{\max\{1, N_{t,n}(x,a)\}}.}
\end{equation}
From Eq.~\eqref{eq:bellman_flow}, we let $\smash{\mathcal{M}_t := \mathcal{M}^{\hat{p}_t}_{\rho_t}}$. For any initial distribution $\nu \in \Delta_{\mathcal{X} \times \mathcal{A}}$, we define  \(\hat{\mu}^{\pi, \nu}_t := (\hat{\mu}^{\pi,\nu}_{t,n})_{n \in [N]}\) the sequence of state-action distributions induced by the policy \(\pi\) in the estimated MDP $\hat{p}_t$ at episode $t$, where for all $(x,a) \in \mathcal{X} \times \mathcal{A}$,  $\hat{\mu}_{t,0}^{\pi, \nu}(x,a) := \nu(x,a)$, and for all $n \in [N]$, 
\[
\textstyle{\hat{\mu}_{t,n+1}^{\pi, \nu}(x,a) := \sum_{x',a'} \hat{\mu}_{t,n}^{\pi, \nu}(x',a') \hat{p}_{t,n+1}(x|x', a') \pi_{n+1}(a|x).}
\]

We present a bonus-based exploration iterative algorithm for adversarial convex constrained MDPs in Eq.~\eqref{iteration_md_solver}. To address the challenge of handling convex adversarial objective functions, the learner performs a Mirror Descent-like (MD) \citep{MD} iteration in each episode \( t \). Given that the true transition kernel is unknown, the learner solves the optimization problem over the set of state-action distributions induced by the current estimated MDP \(\hat{p}_t\). 
%We do not perform optimization steps over a confidence set around \(\hat{p}_t\) for exploration, as done in upper confidence RL approaches \cite{UCRL-2}. 
% Since the policy computed by the learner does not fully satisfy the constraint on the terminal law, the optimization problem is carried out over a set of state-action distributions with a different initial distribution for each episode.

To ensure accurate estimation of \( p \), we need to properly explore the environment. To achieve this, we adjust the gradient of the objective function in MD by subtracting a sequence of vectors \( \bar{b}_t := (\bar{b}_{t,n})_{n \in [N]} \), which we denote by bonus vectors, where \( \bar{b}_{t,n} \in \mathbb{R}^{\mathcal{X} \times \mathcal{A}} \) is defined in Eq.~\eqref{eq:bonus}. To ensure the feasibility of the problem, we must carefully construct the constraint set to consider at each episode. Ass.~\ref{ass:feasibility} guarantees the existence of a periodic policy in the true MDP, starting from the distribution $\rho$. However, this guarantee does not extend to the estimated MDP, as we cannot ensure that the same policy remains periodic there. To address this, we transform the initial equality constraint in the terminal law in an inequality constraint using a second bonus vector $b_t := (b_{t,n})_{n \in [N]}$ also defined in Eq.~\eqref{eq:bonus}. Recall that $\ell$ is the Lipschitz constant of $f_{t,n}$ with respect to $\|\cdot\|_1$ for all $t \in [T]$ and $n \in [N]$. We set
\begin{equation}\label{eq:bonus}
    \textstyle{\bar{b}_{t,n}(x,a) =  \frac{\ell (N-n) C_\delta}{\sqrt{\max{\{1,N_{t,n}(x,a)}\}} }, \quad \text{and} \quad b_{t,n}(x,a) = \frac{ C_\delta}{\sqrt{\max{\{1,N_{t,n}(x,a)}\}} },}
\end{equation}
where $\smash{C_\delta = (2 |\mathcal{X}| \log\big(|\mathcal{X}| |\mathcal{A}| N T/\delta\big)})^{1/2}$, for $\delta \in (0,1)$. 

The structure of the bonus terms can be interpreted as follows. The term $\bar{b}$ is subtracted from the MD subgradient in Eq.~\eqref{iteration_md_solver} to encourage exploration: intuitively, a state-action pair $(x,a)$ that has been visited only a few times (and thus has a large $\bar{b}_n(x,a)$) is treated as having a lower loss, which promotes further sampling. The specific form of $\bar{b}$ follows from the concentration bound on the $L_1$ distance between the true and estimated transition dynamics established in Lemma~\ref{lemma:proba_difference}, and the underlying intuition is analogous to the role of UCB bonuses in multi-armed bandits \citep{lattimore}. The bonus term $b$ is added to the constraints of the MD scheme to ensure feasibility of the optimization problem with respect to the estimated MDP. It explicitly accounts for the uncertainty in the transition model $p$. More details on the structure of both bonus terms becomes clear through the regret analysis.

Adding a bonus to the initial constraint raises another feasibility issue, as we cannot guarantee that the policy computed satisfies the equality constraint on the terminal distribution when applied in practice. This implies that the initial distribution at episode $t$, $\rho_t$, may differ from $\rho$, and we cannot ensure the existence of a policy $\pi$ that brings $\rho_t$ back to $\rho$, \emph{i.e.}, $\rho_t P_\pi = \rho$. To tackle this issue, we make use of Ass.~\ref{ass:contraction} to build a feasible constraint set as we later show in Lemma~\ref{lemma:feasibility}. 

Let $\pi_t$ be the policy played at episode $t$. For short let $\mu_t := \hat{\mu}^{\pi_t, \rho_t}_t$, and $\ell_t := \nabla F_t(\mu_t)$. Let $D_\psi$ be any Bregman divergence over a sequence of state-action distributions, with $\psi$ the function inducing it. Let $0 \leq \bar{\alpha} < 1$. At episode $t+1$, with some $\eta> 0$ to be tuned later, the learner computes
\begin{equation}\label{iteration_md_solver}
\begin{aligned}
 \mu_{t+1} \in \argmin_{\mu \in \mathcal{M}_{t+1}} \quad &\Big\{ \eta \langle \ell_{t} - \bar{b}_t, \mu \rangle +  D_\psi(\mu, \mu_{t}) \Big\} \\
\textrm{s.t.} \quad  &\| \mu_N - \rho \|_1 \leq \langle \mu, b_t \rangle +  \bar{\alpha} \|\rho_t - \rho\|_1,
\end{aligned}
\end{equation}
and sends the policy $\pi_{t+1}$ associated with $\mu_{t+1}$ to all agents. We show that the constraint set is sufficiently large to ensure that the problem solved at each iteration is feasible (see Lemma~\ref{lemma:feasibility}), yet sufficiently small to ensure that the cumulative distance between the initial distribution at each episode, $\rho_t$, and the desired distribution, $\rho$, remains small (see Lemma~\ref{lemma:almost_equal_dist}). This balance is crucial for analyzing the periodic regret in Sec.~\ref{sec:regret_analysis}. The proof of both Lemmas is in App.~\ref{app:feasibility}. We define MDPP-K (Mirror Descent for Periodic Policies - Known initial distributions) in Alg.~\ref{alg:main1} at App.~\ref{app:algo_scheme} as a method that solves one iteration of Eq.~\eqref{iteration_md_solver} at each episode $t$.
\begin{lemma}\label{lemma:feasibility}
    For any $\bar{\alpha} \geq \alpha$, for $\alpha$ as in Ass.~\ref{ass:contraction}, the problem in Eq.~\eqref{iteration_md_solver} is feasible with high-probability.
\end{lemma}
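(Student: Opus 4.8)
The plan is to establish feasibility by exhibiting an explicit feasible point for the program in Eq.~\eqref{iteration_md_solver}, namely the rollout in the \emph{estimated} MDP of a periodic comparator policy. Concretely: by Assumption~\ref{ass:feasibility} there is a policy $\pi^\star$ that is periodic in the true MDP with initial distribution $\rho$, i.e. $\rho P_{\pi^\star} = \rho$. At the episode under consideration, whose initial state-action distribution $\rho_t$ is a genuine element of $\Delta_{\mathcal{X}\times\mathcal{A}}$ (it is obtained from $\rho$ by repeated application of stochastic kernels), I would take as candidate the sequence $\mu := \hat\mu^{\pi^\star,\rho_t}_t$ induced by $\pi^\star$ in the estimated MDP $\hat p_t$ started from $\rho_t$. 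By construction $\mu \in \mathcal{M}^{\hat p_t}_{\rho_t}$, so it automatically satisfies the Bellman-flow constraints of Eq.~\eqref{eq:bellman_flow} defining the feasible set; the whole content of the lemma is therefore the terminal inequality $\|\mu_N - \rho\|_1 \le \langle \mu, b_t\rangle + \bar\alpha\|\rho_t - \rho\|_1$.

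To control $\|\mu_N - \rho\|_1$ I would compare $\mu$ against $\bar\mu := \mu^{\pi^\star,\rho_t}$, the rollout of the \emph{same} policy $\pi^\star$ in the \emph{true} MDP, also started from $\rho_t$, so that $\bar\mu_N = \rho_t P_{\pi^\star}$, and split $\|\mu_N - \rho\|_1 \le \|\mu_N - \bar\mu_N\|_1 + \|\bar\mu_N - \rho\|_1$ by the triangle inequality. The second term is handled by the contraction: since $\pi^\star$ is periodic, Assumption~\ref{ass:contraction} applies with $\nu = \rho_t$ and $\nu' = \rho$, giving $\|\bar\mu_N - \rho\|_1 = \|\rho_t P_{\pi^\star} - \rho P_{\pi^\star}\|_1 \le \alpha\|\rho_t - \rho\|_1 \le \bar\alpha\|\rho_t - \rho\|_1$, and this last step is precisely where the hypothesis $\bar\alpha \ge \alpha$ enters. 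The first term is handled by a ``simulation lemma'': expanding the two forward recursions of Eq.~\eqref{mu_induced_pi} one step at a time — they share the policy $\pi^\star$ and the starting point $\rho_t$ — and telescoping yields $\|\mu_N - \bar\mu_N\|_1 \le \sum_n \sum_{x,a}\mu_n(x,a)\,\|\hat p_{t,n+1}(\cdot|x,a) - p_{n+1}(\cdot|x,a)\|_1$. On the high-probability event of Lemma~\ref{lemma:proba_difference} — obtained by a union bound over all episodes, time steps and state-action pairs, which is what the $\log(|\mathcal{X}||\mathcal{A}|NT/\delta)$ factor inside $C_\delta$ pays for — each transition-estimation error is at most $C_\delta/\sqrt{\max\{1,N_{t,n}(x,a)\}} = b_{t,n}(x,a)$, so the double sum collapses to $\langle\mu, b_t\rangle$; this is the reason the constraint bonus in Eq.~\eqref{eq:bonus} has exactly the inverse-square-root-of-visit-counts shape. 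Adding the two bounds yields the terminal inequality, so $\mu$ is feasible on that event, i.e. with probability at least $1-\delta$.

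The step I expect to need the most care is the bookkeeping in the simulation lemma: the add-and-subtract in the recursion must be arranged so that the coefficient multiplying each transition error is the \emph{estimated} occupancy $\mu_n = \hat\mu^{\pi^\star,\rho_t}_{t,n}$ — the one that actually appears in $\langle\mu, b_t\rangle$ for our candidate — rather than the true occupancy $\bar\mu_n$, and the endpoint indices ($\sum_{n=0}^{N-1}$ versus the inner product over $n\in[N]$) must be reconciled; both are routine once the ordering is fixed. The one genuinely delicate point is that Assumption~\ref{ass:contraction} is available only for policies periodic in the \emph{true} MDP, so the contraction must be invoked on the true-dynamics leg $\bar\mu$ and never on $\mu$ itself or on the executed (only approximately periodic) policies; comparing the estimated rollout of $\pi^\star$ with its own true rollout is exactly what makes this legitimate, and is also the structural reason why one cannot simply pick $\mu$ to be a periodic policy of the estimated MDP (no such guarantee exists).
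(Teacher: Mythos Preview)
Your proposal is correct and matches the paper's proof essentially line for line: the paper also takes the periodic policy from Assumption~\ref{ass:feasibility}, rolls it out in the estimated MDP from $\rho_t$, splits $\|\rho_t\widehat P^t_{\pi}-\rho\|_1$ via the intermediate point $\rho_t P_\pi$, applies Assumption~\ref{ass:contraction} to the second piece, and bounds the first piece by $\langle \hat\mu^{\pi,\rho_t}_t,b_t\rangle$ using exactly the simulation-lemma argument you describe (packaged there as Lemma~\ref{lemma:mu_bonus}). Your anticipated bookkeeping point---that the coefficient in the telescoping must be the \emph{estimated} occupancy so that it matches $\langle\mu,b_t\rangle$---is precisely how the paper's Lemma~\ref{lemma:bound_norm_mu_diff_mu0} is applied.
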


\begin{lemma}\label{lemma:almost_equal_dist}
Let $\pi_t$ be the policy obtained from $\mu_t$ solution of Eq.~\eqref{iteration_md_solver} at episode $t$. Recall that $\rho_1 = \rho$, and for all $t \in [T]$, $\rho_{t+1} = \rho_t P_{\pi_t}$. Then, with high probability, 
\[
  \textstyle{\sum_{t=1}^T \| \rho_{t+1} - \rho \|_1 \leq   \tilde{O}\Big( \frac{N^2}{1- \alpha} |\mathcal{X}|^{3/2} \sqrt{|\mathcal{A}| T} \Big).}
\]
\end{lemma}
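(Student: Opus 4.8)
The plan is to set up a recursion for the quantity $e_{t} := \|\rho_{t} - \rho\|_1$ and then sum the resulting bound over $t \in [T]$. The key observation is that $\rho_{t+1} = \rho_t P_{\pi_t}$, so I want to compare $\rho_t P_{\pi_t}$ with $\rho$ by inserting the estimated terminal distribution $\hat{\mu}^{\pi_t,\rho_t}_{t,N} = (\mu_t)_N$ as an intermediate term. By the triangle inequality,
\[
\|\rho_{t+1} - \rho\|_1 = \|\rho_t P_{\pi_t} - \rho\|_1 \leq \|\rho_t P_{\pi_t} - (\mu_t)_N\|_1 + \|(\mu_t)_N - \rho\|_1.
\]
The first term is the error between the true terminal law under $\pi_t$ started at $\rho_t$ and the estimated terminal law under $\hat{p}_t$ started at the same $\rho_t$; this is controlled by a simulation-lemma-type argument that unrolls the forward equation over the $N$ steps and invokes the concentration bound on $\|p_n(\cdot|x,a) - \hat{p}_{t,n}(\cdot|x,a)\|_1$ from Lemma~\ref{lemma:proba_difference} together with the definition of the bonus $b_t$ in Eq.~\eqref{eq:bonus}. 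In fact, by the way $b_{t,n}$ is constructed, I expect $\|\rho_t P_{\pi_t} - (\mu_t)_N\|_1 \leq \langle \mu_t, b_t\rangle$ (up to the high-probability event), so this term is exactly the right-hand side ingredient that appears in the constraint of Eq.~\eqref{iteration_md_solver}. The second term, $\|(\mu_t)_N - \rho\|_1$, is bounded directly by the feasibility constraint satisfied by $\mu_t$: it is at most $\langle \mu_t, b_t\rangle + \bar\alpha\|\rho_t - \rho\|_1$.

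Combining the two, and taking $\bar\alpha = \alpha$ (which is admissible by Lemma~\ref{lemma:feasibility}), I obtain the contraction-style recursion
\[
e_{t+1} \leq 2\langle \mu_t, b_t\rangle + \alpha\, e_t,
\]
with $e_1 = \|\rho_1 - \rho\|_1 = 0$. Unrolling this geometric recursion gives $e_{t+1} \leq 2\sum_{s=1}^{t} \alpha^{t-s}\langle \mu_s, b_s\rangle$, and hence, summing over $t \in [T]$ and exchanging the order of summation,
\[
\sum_{t=1}^T e_{t+1} \leq 2\sum_{t=1}^T\sum_{s=1}^t \alpha^{t-s}\langle \mu_s, b_s\rangle \leq \frac{2}{1-\alpha}\sum_{s=1}^T \langle \mu_s, b_s\rangle.
\]
It then remains to bound $\sum_{s=1}^T \langle \mu_s, b_s\rangle = \sum_{s=1}^T \sum_{n=1}^N \sum_{x,a} \mu_{s,n}(x,a)\, C_\delta / \sqrt{\max\{1, N_{s,n}(x,a)\}}$. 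Since $\mu_s = \hat\mu^{\pi_s,\rho_s}_s$ is the occupancy measure in the \emph{estimated} MDP rather than the empirical visitation frequency, I would first pass from $\mu_{s,n}$ to the true occupancy measure $\mu^{\pi_s,\rho_s}_{s,n}$ (again via the simulation-lemma error, absorbed into lower-order terms or the same high-probability event), and then use a standard pigeonhole / potential argument: $\sum_{s=1}^T \mu^{\pi_s,\rho_s}_{s,n}(x,a)/\sqrt{\max\{1,N_{s,n}(x,a)\}} = \tilde O(\sqrt{|\mathcal{X}||\mathcal{A}| T})$ per time step $n$, where the martingale step uses precisely the cross-episode independence of the sampled trajectories discussed in Sec.~\ref{sec:learning_problem}. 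Multiplying by $N$ time steps, the factor $C_\delta = \tilde O(\sqrt{|\mathcal{X}|})$, and the $(1-\alpha)^{-1}$ prefactor yields $\tilde O\big(\tfrac{N^2}{1-\alpha}|\mathcal{X}|^{3/2}\sqrt{|\mathcal{A}| T}\big)$, matching the claim. (One $N$ comes from the $N$ time steps in the inner sum, one extra power of $N$ from $\bar b$-free bookkeeping — more precisely from the fact that summing $b_{t,n}$ over $n$ already loses an $N$ — so I should keep careful track of which $N$'s come from where.)

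The main obstacle I anticipate is the first step: rigorously showing $\|\rho_t P_{\pi_t} - (\mu_t)_N\|_1 \leq \langle \mu_t, b_t\rangle$ on the good event. This requires an induction over $n$ bounding $\|\mu^{\pi_t,\rho_t}_{t,n} - \hat\mu^{\pi_t,\rho_t}_{t,n}\|_1$ in terms of the accumulated one-step transition errors, which in turn must be matched term-by-term against $\sum_{n} \sum_{x,a}\hat\mu_{t,n}(x,a) b_{t,n}(x,a)$; getting the constants and the $\max\{1,N_{t,n}(x,a)\}$ normalization to line up with Lemma~\ref{lemma:proba_difference} is the delicate part. A secondary subtlety is that all the ``high probability'' events (the concentration bound of Lemma~\ref{lemma:proba_difference}, the feasibility of Lemma~\ref{lemma:feasibility}, and the pigeonhole martingale bound) must be intersected with a union bound, which is why $C_\delta$ carries the $\log(|\mathcal{X}||\mathcal{A}|NT/\delta)$ factor; I would state the final bound as holding with probability at least $1-\delta$ after this union bound, consistent with the $\tilde O(\cdot)$ notation absorbing the logarithmic terms.
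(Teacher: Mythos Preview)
Your proposal is correct and follows essentially the same route as the paper: the triangle-inequality split $\|\rho_t P_{\pi_t}-\rho\|_1 \le \|\rho_t P_{\pi_t}-(\mu_t)_N\|_1 + \|(\mu_t)_N-\rho\|_1$, with the first term controlled by the bonus (this is exactly Lemma~\ref{lemma:mu_bonus}) and the second by the feasibility constraint, gives the recursion $e_{t+1}\le 2\langle\mu_t,b_t\rangle+\alpha e_t$, and the paper then unrolls, sums, and invokes Corollary~\ref{cor:bonus_analysis} to bound $\sum_t\langle\mu_t,b_t\rangle$ just as you outline. The only small clarification is your $N$-counting: both factors of $N$ enter through Corollary~\ref{cor:bonus_analysis}, one from summing over $n\in[N]$ and one from passing $\hat\mu_t^{\pi_t,\rho_t}$ to $\mu^{\pi_t,\rho_t}$ via Lemma~\ref{lemma:bound_norm_mu_diff_mu0}, rather than from the unrolling itself.
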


\textbf{Practical solution of Problem~\eqref{iteration_md_solver}.}
Our theoretical analysis assumes that Problem~\eqref{iteration_md_solver} can be solved optimally, which we can do in certain special cases of the Bregman divergence. We outline the key idea behind the practical solution, with full details provided in App.~\ref{app:practical_sol}. This approach is used in the experiments presented in Sec.~\ref{sec:experiments}. We adopt a Lagrangian approach that dualizes the constraint on the final distribution, leading to the following Lagrangian formulation:  
\[
\textstyle{\mathcal{L}_t(\mu, \lambda) := \langle \ell_t - \bar{b}_t, \mu \rangle + \frac{1}{\eta} D_\psi(\mu, \mu_t) + \lambda\big[ \|\mu_N - \rho\|_1 - \langle \mu, b_t \rangle - \bar{\alpha} \|\rho_t - \rho \|_1 \big],}
\]
where \( \lambda \in \mathbb{R}_+ \) is the Lagrange multiplier. From the feasibility result of Lemma~\ref{lemma:feasibility}, we know that the optimal solutions to  $\max_{\lambda > 0} \min_{\mu \in \mathcal{M}_{t+1}} \mathcal{L}_t(\mu,\lambda)$ are a pair of optimal primal and dual variables. For a fixed \( \lambda > 0 \) and for a specific Bregman divergence introduced in Eq.~\eqref{gamma:non_standard}, it can be shown that the policy inducing the optimal distribution in the unconstrained problem $\min_{\mu \in \mathcal{M}_{t+1}} \mathcal{L}_t(\mu,\lambda)$ has a closed-form solution stated in Eq.~\eqref{eq:closed_form} \citep{neu2017unifiedviewentropyregularizedmarkov}. Thus, we adopt a min-max approach: we initialize the Lagrange multiplier, compute the corresponding distribution by solving the associated unconstrained minimization problem, and then update the Lagrange multiplier using a gradient ascent step. This process continues iteratively until the resulting distribution satisfies the constraints.

Lemma~\ref{lemma:feasibility} established that the problem in Eq.~\eqref{iteration_md_solver} is feasible for any $\bar{\alpha} \geq \alpha$. In practice, the contraction parameter \( \alpha \) may be unknown. Nonetheless, since the algorithm only requires a value \( \bar{\alpha} \) that guarantees feasibility, a practical solution is to construct a grid over the interval \( (0,1) \) and choose the smallest \( \bar{\alpha} \) for which the problem is feasible. To do this, we can search over an exponentially spaced grid $\{\frac{1}{2}, \frac{3}{4},\frac{7}{8},\ldots \}$, and stop once the problem becomes feasible for the current value. This procedure yields a computational cost of $\smash{\log\big(\frac{1}{1-\alpha}\big)}$ per episode. To test feasibility, the $L_1$ constraint can be reformulated as a set of linear inequalities, allowing a solver to efficiently check feasibility. This introduces only a factor of $2$ in the regret, and in practice may perform even better as we may find a value $\bar{\alpha} < \alpha$ for which the problem is already feasible.

\subsection{Periodic regret analysis}\label{sec:regret_analysis}
The main result is in Thm.~\ref{thm:main_periodic_regret_known_rhot}, with its proof in App.~\ref{app:main_result}. We highlight here the novelties of our analysis.
\begin{theorem}\label{thm:main_periodic_regret_known_rhot}
    Running MDPP-K (Alg.~\ref{alg:main1}) for $T$ episodes in an MDP with unknown transition kernel $p$ and initial state-action distribution $\rho$, against adversarial objectives $\smash{F_t := \sum_{n=1}^N f_{t,n}}$, where each $f_{t,n}$ is convex and $\ell$-Lipschitz with respect to the norm $\|\cdot\|_1$, with parameter $\alpha$, the optimal choice of $\eta$, \( \|\nabla \psi(\mu_t)\|_{1, \infty} \leq \Psi \), guarantees that, for any periodic policy $\pi$, with high probability, 
    \[
\textstyle{R_T(\pi) \leq \tilde{O} \big(  \ell N^3|\mathcal{X}|^{5/4} |\mathcal{A}|^{1/4} \sqrt{\Psi(1 - \alpha)^{-1}} T^{3/4} +  (1-\alpha)^{-1} \ell N^4 |\mathcal{X}|^{2} \sqrt{|\mathcal{A}| T}   \big).}
\]
\end{theorem}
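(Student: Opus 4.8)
The plan is to split the periodic regret $R_T(\pi)$ into four pieces and bound each in turn. With $\mu_t := \hat{\mu}^{\pi_t,\rho_t}_t$ the occupancy measure of the played policy in the estimated kernel, and with comparator sequence $u_t := \hat{\mu}^{\pi,\rho_t}_t$ (the fixed periodic policy $\pi$ run from the \emph{actual} current distribution $\rho_t$ inside the estimated MDP $\hat{p}_t$), write
$R_T(\pi) = \sum_t\big[F_t(\mu^{\pi_t,\rho_t}) - F_t(\mu_t)\big] + \sum_t\big[F_t(\mu_t) - F_t(u_t)\big] + \sum_t\big[F_t(u_t) - F_t(\mu^{\pi,\rho})\big] + \gamma\sum_t\|\rho_t-\rho\|_1$.
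The last term is handled directly by Lemma~\ref{lemma:almost_equal_dist}, contributing (up to the constant $\gamma$) the $\tilde{O}\big((1-\alpha)^{-1}N^2|\mathcal{X}|^{3/2}\sqrt{|\mathcal{A}|T}\big)$ scale, and is the source of the second summand in the bound once a factor $\ell N^2 C_\delta$ enters from the analysis below.

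For the first and third pieces I would use a simulation argument: iterating the forward equation~\eqref{mu_induced_pi} and the transition concentration bound of Lemma~\ref{lemma:proba_difference}, $\|p_n(\cdot|x,a)-\hat{p}_{t,n}(\cdot|x,a)\|_1 \le C_\delta/\sqrt{\max\{1,N_{t,n}(x,a)\}}$, the error $\|\mu^{\pi_t,\rho_t}_n - \mu_{t,n}\|_1$ telescopes over the remaining $N-n$ steps, which is exactly why the exploration bonus $\bar{b}_{t,n}$ carries the factor $\ell(N-n)$. Combined with $\ell$-Lipschitzness of each $f_{t,n}$, the first piece is at most $\sum_t\langle\bar{b}_t,\mu_t\rangle$ up to lower-order terms, and the third is at most $\sum_t\langle\bar{b}_t,u_t\rangle$ plus the contribution of $u_t$ versus $\mu^{\pi,\rho}$ having different initial distributions; since transition kernels are $L_1$-nonexpansive, the latter is at most $N\ell\sum_t\|\rho_t-\rho\|_1$, again absorbed by Lemma~\ref{lemma:almost_equal_dist}.

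The second piece is the core Mirror-Descent argument. By convexity $F_t(\mu_t)-F_t(u_t) \le \langle\ell_t,\mu_t-u_t\rangle = \langle\ell_t-\bar{b}_t,\mu_t-u_t\rangle + \langle\bar{b}_t,\mu_t-u_t\rangle$, and the bonus part $\langle\bar{b}_t,\mu_t\rangle-\langle\bar{b}_t,u_t\rangle$ is merged with the sums from the previous paragraph. The update~\eqref{iteration_md_solver} is an MD step onto the episode-dependent constraint set $\mathcal{M}_{t+1}$ intersected with the relaxed terminal-law inequality; Lemma~\ref{lemma:feasibility} guarantees that $u_{t+1}$ lies in this set, since the slack $\langle\mu,b_t\rangle + \bar{\alpha}\|\rho_t-\rho\|_1$ is calibrated precisely so that the true periodic policy's occupancy, transported to $\hat{p}_{t+1}$ and $\rho_{t+1}$, stays feasible. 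The one-step MD inequality gives, for each $t$, $\eta\langle\ell_t-\bar{b}_t,\mu_t-u_{t+1}\rangle \le D_\psi(u_{t+1},\mu_t) - D_\psi(u_{t+1},\mu_{t+1}) + \eta^2\|\ell_t-\bar{b}_t\|_*^2$. Summing, a fixed comparator would telescope, but here $u_t$ drifts through $\rho_t$ and $\hat{p}_t$, producing an extra $\tfrac1\eta\sum_t\big(D_\psi(u_{t+1},\mu_{t+1}) - D_\psi(u_t,\mu_{t+1})\big) \le \tfrac{\Psi}{\eta}\sum_t\|u_{t+1}-u_t\|_{\infty,1}$ using $\|\nabla\psi(\mu_t)\|_{1,\infty}\le\Psi$. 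This drift is controlled by $L_1$-nonexpansiveness in the initial distribution together with $\sum_t\|\rho_{t+1}-\rho_t\|_1 \le 2\sum_t\|\rho_t-\rho\|_1$ (Lemma~\ref{lemma:almost_equal_dist}) and the slow variation $\sum_t\|\hat{p}_{t+1}-\hat{p}_t\|_{\infty,1} = \tilde{O}(|\mathcal{X}||\mathcal{A}|N)$ from a log/harmonic-series estimate, yielding a drift of order $\Psi(1-\alpha)^{-1}N^3|\mathcal{X}|^{3/2}\sqrt{|\mathcal{A}|T}/\eta$.

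Finally I would bound the bonus sums $\sum_t\langle\bar{b}_t,\mu_t\rangle$ and $\sum_t\langle b_t,\mu_t\rangle$. First one passes from the estimated occupancy $\mu_{t,n}(x,a)$ to realized visit counts, legitimate because the episode-$t$ trajectory is from an agent sampled independently of the past — this is Prop.~\ref{prop:mdp_martingale}, and the reason $M>1$ is required — so $\sum_t\mu_{t,n}(x,a)$ equals $N_{T+1,n}(x,a)$ up to an $\tilde{O}(\sqrt{T})$ martingale deviation; then $\sum_t \mu_{t,n}(x,a)/\sqrt{\max\{1,N_{t,n}(x,a)\}} = O\big(\sqrt{N_{T+1,n}(x,a)}\big)$ via $\sum_{k\le m}k^{-1/2}=O(\sqrt{m})$, and Cauchy--Schwarz over $(x,a)$ with $\sum_{x,a}N_{T+1,n}(x,a)=T$ gives $\sum_t\langle\bar{b}_t,\mu_t\rangle = \tilde{O}\big(\ell N^2 C_\delta\sqrt{|\mathcal{X}||\mathcal{A}|T}\big) = \tilde{O}\big(\ell N^2|\mathcal{X}|\sqrt{|\mathcal{A}|T}\big)$. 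Assembling the four pieces and choosing $\eta$ to balance $\tfrac{\Psi}{\eta}\cdot\tilde{O}(\sqrt{T})$ against $\eta\sum_t\|\ell_t-\bar{b}_t\|_*^2 = \tilde{O}\big(\eta T(\ell N C_\delta)^2\big)$ produces the $T^{3/4}$ rate with the stated dependence on $\Psi,\alpha,N,|\mathcal{X}|,|\mathcal{A}|$. The main obstacle is this MD analysis over a \emph{time-varying} constraint set coupled with the moving comparator $u_t$: one must show the relaxed set is simultaneously large enough for feasibility (Lemma~\ref{lemma:feasibility}) and tight enough that the drift, penalty, and bonus terms all remain sublinear — and, underpinning everything, establishing the martingale structure of Prop.~\ref{prop:mdp_martingale} that the estimation bounds rest on.
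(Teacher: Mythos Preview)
Your proposal is correct and follows the paper's approach closely: the same decomposition (labeled there $R_T^{\text{MDP}}$, $R_T^{\text{MD}}$, $R_T^{\text{diff.}\rho_t}$, $R_T^{\text{bonus}}$, plus the penalty), the same MD-with-drifting-comparator analysis (the paper's Lemma~\ref{lemma:aux_md}) using $u_t=\hat\mu^{\pi,\rho_t}_t$, the same bonus cancellation leaving only $\sum_t\langle\bar b_t,\mu_t\rangle$ to control, and the same martingale/harmonic-series bound via Prop.~\ref{prop:mdp_martingale}. Your minor slips --- the one-step MD inequality should compare $\mu_{t+1}$ (not $\mu_t$) to $u_{t+1}$ before adding $\langle z_t,\mu_t-\mu_{t+1}\rangle$, the occupancy-weighted kernel variation is $\tilde O(N)$ rather than $\tilde O(|\mathcal X||\mathcal A|N)$, and the drift from $\sum_t\|\rho_{t+1}-\rho_t\|_1$ carries an $N^2$ rather than $N^3$ --- do not affect the argument or the final rate.
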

To obtain this result, we begin by decomposing the periodic regret into two components: 
\begin{equation*}
    \begin{split}
      \displaystyle{   R_T(\pi) = \underbrace{\sum_{t=1}^T F_t(\mu^{\pi_t, \rho_t}) - F_t(\hat{\mu}^{\pi_t, \rho_t}_t)}_{R_T^{\text{MDP}}} + \underbrace{\sum_{t=1}^T F_t(\hat{\mu}^{\pi_t, \rho_t}_t) - F_t(\mu^{\pi, \rho})}_{R_T^{\text{policy}}}} + \gamma \sum_{t=1}^T \|\rho - \rho_t \|_1.
    \end{split}
\end{equation*}
We show that with high probability $R_T^{\text{MDP}} = \tilde{O}(\sqrt{T})$. As this analysis generalizes conventional techniques \citep{Rosenberg2019}, we leave the details in the appendix. Recall that $\ell_t := \nabla F_t(\mu_t)$ and $\mu_t := \hat{\mu}^{\pi_t, \rho_t}_t$. Using the convexity of $F_t$, we further upper bound $\smash{R_T^{\text{policy}}}$ with
\begin{equation*}
    \begin{split}
        \displaystyle{ \underbrace{\sum_{t=1}^T \langle \ell_t - \bar{b}_t, \mu_t - \hat{\mu}^{\pi, \rho_t}_t \rangle}_{R_T^{\text{MD}}} + \underbrace{\sum_{t=1}^T \langle \ell_t - \bar{b}_t, \hat{\mu}^{\pi, \rho_t}_t - \hat{\mu}^{\pi, \rho}_t \rangle}_{R_T^{\text{diff. }\rho_t}} + \underbrace{\sum_{t=1}^T \langle \bar{b}_t, \mu_t - \hat{\mu}^{\pi, \rho}_t \rangle + \langle \ell_t, \hat{\mu}^{\pi, \rho}_t - \mu^{\pi, \rho} \rangle}_{R_T^{\text{bonus}}}.}
    \end{split}
\end{equation*}
The bonus vector added to the gradient term in Eq.~\eqref{iteration_md_solver} ensures that the term  $R_T^{\text{bonus}} = \tilde{O}(\sqrt{T})$, and its analysis is generalized from the one in \citep{moreno_icml}, hence the details are left in App.~\ref{app:main_result}. The novelties of our analysis lie in the way we treat the term $\smash{R_T^{\text{MD}}}$ and the $\rho_t$ variation in $\smash{R_T^{\text{diff. }\rho_t}}$.

\textbf{$R_T^{\text{diff. }\rho_t}$ term.} This term accounts for the deviation of the initial distribution in each episode from the original target. In the episodic setting, where all agents reset at the end of each episode, this term does not appear. We upper bound it in Prop.~\ref{prop:diff_rho_rhot}, with the proof in App.~\ref{app:proof_rho_rhot}. 
%The idea is to first show that the inner product of the difference between state-action distributions induced by the same policy and probability kernel but with different initial distributions can be expressed solely in terms of the initial distribution difference. Combining this result with Lemma~\ref{lemma:almost_equal_dist} gives the desired upper bound.
\begin{proposition}\label{prop:diff_rho_rhot}
With high probability we have that
\[
\textstyle{R_T^{\text{diff. } \rho_t} \leq \tilde{O}\big( (1-\alpha)^{-1} \ell N^4 |\mathcal{X}|^2 \sqrt{|\mathcal{A}| T} \big).}
\]
\end{proposition}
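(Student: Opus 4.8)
\textbf{Proof proposal for Proposition~\ref{prop:diff_rho_rhot}.}
The plan is to bound the inner product $\langle \ell_t - \bar{b}_t, \hat{\mu}^{\pi,\rho_t}_t - \hat{\mu}^{\pi,\rho}_t \rangle$ by Hölder's inequality over the length-$N$ sequence, separating a uniform sup-norm bound on the ``gradient minus bonus'' vector from an $\ell_1$ bound on the difference of the two occupancy-measure sequences; then to observe that this latter difference is driven entirely by $\|\rho_t - \rho\|_1$ and is non-expansive under the common estimated dynamics $\hat{p}_t$; and finally to invoke Lemma~\ref{lemma:almost_equal_dist}. Concretely, using $\langle v, w\rangle = \sum_{n=1}^N \langle v_n, w_n\rangle \le \big(\sup_{n}\|v_n\|_\infty\big)\sum_{n=1}^N \|w_n\|_1$, the whole term $R_T^{\text{diff. }\rho_t}$ reduces to a product of three quantities: a per-step uniform bound on $\|\ell_{t,n} - \bar{b}_{t,n}\|_\infty$, a factor $N$ from summing over the horizon, and $\sum_{t=1}^T \|\rho_t - \rho\|_1$.

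First, since $f_{t,n}$ is convex and $\ell$-Lipschitz with respect to $\|\cdot\|_1$, its gradient satisfies $\|\ell_{t,n}\|_\infty = \|\nabla f_{t,n}(\mu_{t,n})\|_\infty \le \ell$ (the dual norm of $\|\cdot\|_1$ being $\|\cdot\|_\infty$); and since $\sqrt{\max\{1, N_{t,n}(x,a)\}} \ge 1$ and $C_\delta = \tilde{O}(\sqrt{|\mathcal{X}|})$, the bonus obeys $\|\bar{b}_{t,n}\|_\infty \le \ell N C_\delta = \tilde{O}(\ell N \sqrt{|\mathcal{X}|})$. Hence $\sup_{n\in[N]} \|\ell_{t,n} - \bar{b}_{t,n}\|_\infty = \tilde{O}(\ell N \sqrt{|\mathcal{X}|})$, uniformly in $t$. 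Second, writing $\hat{\mu}^{\pi,\nu}_{t,n} = \nu\, \hat{P}^\pi_{t,1}\cdots \hat{P}^\pi_{t,n}$ where $\hat{P}^\pi_{t,k}$ is the stochastic matrix on $\mathcal{X}\times\mathcal{A}$ with entries $\hat{p}_{t,k}(x|x',a')\pi_k(a|x)$, and using that multiplication by a stochastic matrix is non-expansive on signed measures in $\|\cdot\|_1$, one telescopes to get $\|\hat{\mu}^{\pi,\rho_t}_{t,n} - \hat{\mu}^{\pi,\rho}_{t,n}\|_1 = \|(\rho_t - \rho)\hat{P}^\pi_{t,1}\cdots \hat{P}^\pi_{t,n}\|_1 \le \|\rho_t - \rho\|_1$, so $\sum_{n=1}^N \|\hat{\mu}^{\pi,\rho_t}_{t,n} - \hat{\mu}^{\pi,\rho}_{t,n}\|_1 \le N\|\rho_t - \rho\|_1$.

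Combining these gives $R_T^{\text{diff. }\rho_t} \le \tilde{O}(\ell N^2 \sqrt{|\mathcal{X}|}) \sum_{t=1}^T \|\rho_t - \rho\|_1$, and since $\rho_1 = \rho$, Lemma~\ref{lemma:almost_equal_dist} yields $\sum_{t=1}^T \|\rho_t - \rho\|_1 \le \tilde{O}\big(\tfrac{N^2}{1-\alpha}|\mathcal{X}|^{3/2}\sqrt{|\mathcal{A}|T}\big)$ with high probability; multiplying the two bounds gives $R_T^{\text{diff. }\rho_t} = \tilde{O}\big((1-\alpha)^{-1}\ell N^4 |\mathcal{X}|^2 \sqrt{|\mathcal{A}|T}\big)$, as claimed. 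This argument is essentially routine; the only points requiring care are the norm bookkeeping across the length-$N$ sequence and tracking that the bonus $\bar{b}_{t,n}$ can be as large as $\ell N C_\delta$ pointwise (this is precisely what produces the extra $N\sqrt{|\mathcal{X}|}$ factor relative to the $\ell_\infty$ bound on $\ell_t$ alone), while the contraction step is immediate from stochasticity of $\hat{P}^\pi_{t,k}$ and the high-probability statement enters only through Lemma~\ref{lemma:almost_equal_dist}.
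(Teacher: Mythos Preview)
Your proposal is correct and follows essentially the same approach as the paper's proof: H\"older's inequality with the sup-norm bound $\|\ell_{t,n} - \bar{b}_{t,n}\|_\infty \le 2\ell N C_\delta$, the non-expansiveness bound $\|\hat{\mu}^{\pi,\rho_t}_{t,n} - \hat{\mu}^{\pi,\rho}_{t,n}\|_1 \le \|\rho_t - \rho\|_1$ (the paper obtains this via Lemma~\ref{lemma:bound_norm_mu_diff_mu0} with identical kernels, which reduces to exactly your stochastic-matrix argument), and finally Lemma~\ref{lemma:almost_equal_dist}. The factor tracking is identical.
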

\textbf{$R_T^{\text{MD}}$ term.}
This term measures the quality of our computations using an MD scheme, with an upper bound in Prop.~\ref{prop:bound_md_term} and proof in App.~\ref{app:md_proof}. Unlike in standard MD, this term is more challenging because each iteration uses an MDP induced by a different transition estimate $\hat{p}_t$, starting from a different initial distribution $\rho_t$. We assume $\|\nabla \psi(\mu_t)\|_{1, \infty} \leq \Psi$. This can be guaranteed in practice by mixing the output policy with a uniform policy, adding only logarithmic terms on $T$ to the regret. We do not elaborate on it further in this paper, as the technique is already described in works such as \cite{moreno_icml} and requires only straightforward additional calculations. 

\begin{proposition}\label{prop:bound_md_term}
    For each episode \( t \in [T] \), let \( \mu_t \) denote the solution of the optimization problem in Eq.~\eqref{iteration_md_solver}. Suppose further that \( \|\nabla \psi(\mu_t)\|_{1, \infty} \leq \Psi \), and let \( \ell \) be the Lipschitz constant of \( f_{t,n} \) with respect to $\|\cdot\|_1$ for all \( t \in [T] \) and $n \in [N]$. Then, with high probability,
    \[
\textstyle{R_T^{\text{MD}} \leq \tilde{O} \big( \ell N^3 |\mathcal{X}|^{5/4} |\mathcal{A}|^{1/4} \sqrt{\Psi (1 - \alpha)^{-1}} T^{3/4} \big).}
    \]
\end{proposition}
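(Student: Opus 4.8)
\textbf{Proof plan for Proposition~\ref{prop:bound_md_term}.}

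The plan is to adapt the standard Mirror Descent regret decomposition to our setting, where the key non-standard feature is that each iteration $t$ operates over a different constraint polytope $\mathcal{M}_{t+1} = \mathcal{M}^{\hat p_{t+1}}_{\rho_{t+1}}$, so the comparator $\hat\mu^{\pi,\rho_t}_t$ also changes with $t$. I would first write the one-step inequality for the MD update in Eq.~\eqref{iteration_md_solver}: by the optimality conditions of the Bregman projection and the three-point identity, for any feasible $\mu$ in episode $t+1$,
\begin{equation*}
\eta \langle \ell_t - \bar b_t, \mu_{t+1} - \mu \rangle \le D_\psi(\mu, \mu_t) - D_\psi(\mu, \mu_{t+1}) - D_\psi(\mu_{t+1}, \mu_t).
\end{equation*}
The subtlety is that $\mu = \hat\mu^{\pi,\rho_t}_t$ need not be feasible in episode $t+1$, so I would instead apply this with $\mu = \hat\mu^{\pi,\rho_{t+1}}_{t+1}$ — which \emph{is} feasible by Lemma~\ref{lemma:feasibility} (since $\pi$ is periodic in the true MDP) — and then pay for the mismatch between $\hat\mu^{\pi,\rho_t}_t$ and $\hat\mu^{\pi,\rho_{t+1}}_{t+1}$ via a telescoping-type argument. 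This mismatch term is controlled by the sensitivity of induced occupancy measures to (i) the change in estimated kernel $\|\hat p_{t+1} - \hat p_t\|$ and (ii) the change in initial distribution $\|\rho_{t+1} - \rho_t\|_1$; both are summable-in-expectation by the standard counting argument behind Lemma~\ref{lemma:proba_difference} and by Lemma~\ref{lemma:almost_equal_dist} together with $\|\rho_t - \rho\|_1$ bounds.

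Next I would sum the one-step inequalities over $t \in [T]$. The Bregman terms $D_\psi(\hat\mu^{\pi,\rho_{t+1}}_{t+1}, \mu_t) - D_\psi(\hat\mu^{\pi,\rho_{t+1}}_{t+1}, \mu_{t+1})$ do \emph{not} telescope cleanly because the comparator shifts each step; here I would bound $D_\psi(\mu^{(t+1)}, \mu_{t+1}) - D_\psi(\mu^{(t)}, \mu_{t+1})$ using the assumed gradient bound $\|\nabla\psi(\mu_t)\|_{1,\infty} \le \Psi$ and the Lipschitz-in-comparator property of the Bregman divergence, incurring $\sum_t \Psi \|\hat\mu^{\pi,\rho_{t+1}}_{t+1} - \hat\mu^{\pi,\rho_t}_t\|_{\infty,1}$. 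Combined with the leading $\sum_t \Psi/\eta$ from the (almost-)telescoped head term and the curvature term $\sum_t \eta \|\ell_t - \bar b_t\|_\infty^2$ (where $\|\ell_t\|_\infty \le \ell$ and $\|\bar b_t\|_\infty$ is $O(\ell N C_\delta)$ but the sum over visited pairs is $\tilde O(\ell N \sqrt{|\mathcal{X}||\mathcal{A}|T})$ per the bonus-sum argument), and finally replacing $\hat\mu^{\pi,\rho_{t+1}}_{t+1} - \mu_{t+1}$ back by the quantity $\mu_t - \hat\mu^{\pi,\rho_t}_t$ appearing in $R_T^{\text{MD}}$ (another occupancy-sensitivity shift), I would get a bound of the shape $\tilde O(\Psi T/\eta + \eta \ell^2 (\cdot) + \ell N \sqrt{|\mathcal{X}||\mathcal{A}|T}\cdot(\cdot))$. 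Optimizing $\eta$ over the dominant $\Psi T/\eta + \eta \times (\text{curvature} \sim \ell^2 N^2 \sqrt{|\mathcal{X}||\mathcal{A}|T})$-type trade-off yields the stated $T^{3/4}$ rate with the $\sqrt{\Psi(1-\alpha)^{-1}}$ factor (the $(1-\alpha)^{-1}$ entering through the occupancy-shift terms, which depend on $\sum_t\|\rho_{t+1}-\rho\|_1$ bounded via Lemma~\ref{lemma:almost_equal_dist}) and the polynomial dependence $\ell N^3 |\mathcal{X}|^{5/4}|\mathcal{A}|^{1/4}$ after accounting for the $N$ summands per episode and the propagation of kernel/initial-distribution errors through the $N$-step forward recursion in Eq.~\eqref{mu_induced_pi}.

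The main obstacle I anticipate is the moving-comparator issue: quantifying $\|\hat\mu^{\pi,\rho_{t+1}}_{t+1} - \hat\mu^{\pi,\rho_t}_t\|_{\infty,1}$ and showing its sum over $t$ is $\tilde O(T^{1/2})$ (so it does not dominate). This requires a careful perturbation lemma for finite-horizon occupancy measures: unrolling Eq.~\eqref{mu_induced_pi} shows the difference is a sum of $N$ terms, each of the form (accumulated mass) $\times$ (one-step kernel difference $\hat p_{t+1} - \hat p_t$ at some stage) propagated forward, plus the initial-distribution difference term $\|\rho_{t+1}-\rho_t\|_1$ propagated by a stochastic matrix (hence non-expansive in $\|\cdot\|_1$, or contractive by $\alpha$ under Ass.~\ref{ass:contraction} for the periodic-policy factor). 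Summing the kernel-difference contributions over $t$ reduces, via $\hat p_{t+1,n}-\hat p_{t,n} = O(1/N_{t,n}(x,a))$ on visited pairs, to $\sum_t \sum_{(x,a)} 1/\max\{1,N_{t,n}(x,a)\}$, which is $\tilde O(|\mathcal{X}||\mathcal{A}|)$ per stage — a logarithmic (harmonic-sum) bound, not even $\sqrt T$ — and the initial-distribution contributions are handled by Lemma~\ref{lemma:almost_equal_dist}. Once this perturbation lemma is in hand, the rest is the bookkeeping described above; the rate is driven by the $\eta$-trade-off between the $\Psi T/\eta$ head term and the curvature term, which is what produces $T^{3/4}$ rather than $\sqrt T$.
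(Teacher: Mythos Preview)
Your overall architecture matches the paper's: start from the three-point Bregman inequality for the update~\eqref{iteration_md_solver}, apply it with the \emph{feasible} comparator $\nu_{t+1}=\hat\mu^{\pi,\rho_{t+1}}_{t+1}$ (Lemma~\ref{lemma:feasibility}), then pay a shift cost for $\nu_t\neq\nu_{t+1}$, bound that shift via $\|\nabla\psi(\mu_t)\|_{1,\infty}\le\Psi$ together with an occupancy-perturbation lemma (kernel change $\hat p_{t+1}-\hat p_t$ summing to $O(\log T)$; initial-distribution change via Lemma~\ref{lemma:almost_equal_dist}), and finally optimize $\eta$. Your ``main obstacle'' paragraph is exactly right and is the heart of the argument.

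Where your accounting goes wrong is in the $\eta$ trade-off, and the errors are not cosmetic:
\begin{itemize}
\item The comparator-shift cost $\Psi\sum_t\|\nu_{t+1}-\nu_t\|_{\infty,1}$ comes from $D_\psi(\nu_{t+1},\mu_t)-D_\psi(\nu_t,\mu_t)$, which sits on the same side as the Bregman terms in your three-point inequality and therefore carries a factor $1/\eta$. This is the \emph{dominant} $1/\eta$ term: it is $\tfrac{\Psi}{\eta}\,\tilde O\!\big((1-\alpha)^{-1}N^2|\mathcal X|^{3/2}\sqrt{|\mathcal A|T}\big)$, not a side term without $1/\eta$.
\item There is no ``$\sum_t \Psi/\eta=\Psi T/\eta$'' head term. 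After the shift is accounted for, the telescoped part is just $D_\psi(\nu_1,\mu_1)/\eta=O(1)/\eta$.
\item The curvature term cannot be improved by a bonus-sum argument. Young's inequality gives $\eta\langle z_t,\mu_t-\mu_{t+1}\rangle-D_\psi(\mu_{t+1},\mu_t)\le \tfrac{\eta}{2}\|z_t\|_{1,\infty}^2$, a \emph{squared} norm; you need the uniform bound $\|z_t\|_{1,\infty}\le\zeta:=2\ell N^2 C_\delta$, giving $\eta\zeta^2 T\sim \eta\,\ell^2 N^4|\mathcal X|\,T$. The bonus-sum bound applies to $\sum_t\langle \bar b_t,\mu\rangle$-type linear terms (as in $R_T^{\text{bonus}}$), not here.
\end{itemize}
The correct balance is therefore $\tfrac{1}{\eta}\,\Psi(1-\alpha)^{-1}N^2|\mathcal X|^{3/2}\sqrt{|\mathcal A|T}\;+\;\eta\,\ell^2 N^4|\mathcal X|\,T$, which after optimizing $\eta$ yields exactly $\tilde O\big(\ell N^3|\mathcal X|^{5/4}|\mathcal A|^{1/4}\sqrt{\Psi(1-\alpha)^{-1}}\,T^{3/4}\big)$. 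Your two errors (putting a spurious $T$ in the $1/\eta$ part and a spurious $\sqrt T$ instead of $T$ in the $\eta$ part) happen to cancel in the $T$-exponent, but they misattribute the source of the $T^{3/4}$ and would give wrong dependencies on $N,|\mathcal X|,|\mathcal A|,(1-\alpha)^{-1}$.
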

\textbf{Discussion.} 
We remark that the factor $(1-\alpha)^{-1}$ scales polynomially with the mixing time $\tau$, commonly used to bound regret in infinite-horizon settings (see Table~\ref{table:comparisons}). The $\smash{T^{3/4}}$ dependency of MDPP-K comes from applying OMD over state-action distribution sequences initialized with varying distributions $\rho_t$ per episode. Lower bounds for the adversarial infinite-horizon problem are known only in the case of known dynamics, yielding $O(\sqrt{T})$ in the full information setting \citep{communicating}. For unknown dynamics, establishing a lower bound remains an open problem.

Regarding the dependence of the regret on the MDP parameters, it is known that exploration bonuses lead to an upper bound with an $N^3$ dependence \citep{luo_dilated_bonus}, introducing an extra factor of $N$ compared to the episodic RL lower bound. In our setting, we show that MDPP-K exhibits the same dependence. Concerning the dependence on the number of states, prior work \citep{moreno_icml} shows that, in episodic RL, combining exploration bonuses with OMD in the occupancy-measure space yields a $|\mathcal{X}|^{3/2}$ dependence, an additional $\sqrt{|\mathcal{X}|}$ over the lower bound arising from the constants in the bonus terms (see Eq.~\eqref{eq:bonus}). In our setting, OMD must additionally handle changing initial state distributions, and the exploration bonuses appearing in the constraints influence these variations. As a result, the same constant is incurred once more, leading to an overall $|\mathcal{X}|^{5/4}$ dependence.

The computational complexity of MDPP-K increases linearly with $|\mathcal{X}|$, $|\mathcal{A}|$, and $N$, due to the structure of the closed-form output policy in Eq.~\eqref{eq:closed_form}. When we refer to the method as having low computational complexity, we mean that it admits a closed-form solution, as detailed in App.~\ref{app:practical_sol}, so no additional optimization step or external solver is required.

\section{Framework 2: unknown initial distributions}\label{sec:algorithm_unknown_rhot}

In this setting, the initial distribution $\rho_t$ is unknown. Thus, we approximate it by $\tilde{\rho}_t$. We then apply the iterative scheme from Eq.~\eqref{iteration_md_solver} as if $\tilde{\rho}_t$ were the true initial distribution. As $\rho_t = \rho_{t-1} P_{\pi_{t-1}}$, an accurate estimation relies on a good transition kernel estimate. In Framework 1, we needed a transition kernel estimate $\hat{p}_t$ that is accurate on the state–action pairs visited by agents initialized from the \emph{true} initial distribution $\rho_t$ under the computed policy $\pi_t$ (see the analysis of $R_T^{\text{MDP}}$ in App.~\ref{prop:R_T_mdp}). Since agents without resets are initialized from $\rho_t$, observing independent trajectories in each episode was sufficient in Sec.~\ref{sec:algorithm}.
%To compute a reliable estimate of the transition kernel $\hat{p}_t$ for the iterative scheme in Eq.~\eqref{iteration_md_solver}, we need accurate estimates over the state–action pairs visited by agents initialized from the \emph{true} distribution $\rho_t$ under the computed policy $\pi_t$ (see the analysis of $R_T^{\text{MDP}}$ in App.~\ref{prop:R_T_mdp}). In this case, observing independent agent trajectories every episode is sufficient, as in Framework 1.

However, using $\hat{p}_t$ to estimate the initial distribution $\rho_t$ does not yield a good approximation. Let $\tilde{\rho}_t$ denote its estimate at episode $t$. To reliably estimate the initial distribution for episode $t+1$, we need a transition kernel estimate accurate over the state–action pairs visited by an agent initialized from the previously \emph{estimated} initial distribution $\tilde{\rho}_t$ (see the proof of Prop.~\ref{prop:diff_rho_rhot} in App.~\ref{app:setting2}). The difficulty is that, without resets, agents actually start from the true distribution $\rho_t$, not the estimated one $\tilde{\rho}_t$.

To address this challenge, we introduce two additional assumptions. First, we strengthen Ass.~\ref{ass:contraction} so that it applies to all policies, not just periodic ones, which corresponds to the standard ergodicity assumption often used in infinite-horizon MDPs \citep{chen2022learninginfinitehorizonaveragerewardmarkov}. Second, at the start of each episode $t$, we allow a randomly selected agent to be restarted from a state–action pair sampled according to the \emph{estimated} initial distribution $\tilde{\rho}_t$. This agent can be the same each episode or vary across episodes. 

\textbf{Algorithm overview.}
Let \( \tilde{\rho}_1 = \rho \). For all \( t \geq 1 \), in the start of episode $t$, we reset a randomly selected agent to a initial state-action pair sampled from $\tilde{\rho}_t$. We observe the trajectory followed by the restarted agent at episode $t$, and define $\tilde{p}_{t+1}$, a second empirical estimation of $p$, such as in Eq.~\eqref{eq:proba_est}, but using the trajectories of the restarted agent instead. We denote by \(\smash{ \widetilde{P}^t_{\pi_t}} \) the stochastic matrix induced by the transition kernel \( (\tilde{p}_{t,n})_{n \in [N]} \) and the policy \( \pi_t \), as defined in Eq.~\eqref{eq:markov_chain_proba}. We take $\smash{\tilde{\rho}_{t+1} = \tilde{\rho}_t \widetilde{P}^t_{\pi_{t}} }$. Prop.~\ref{prop:estimate_rhot} provides a key result on the accuracy of this estimate and is the main result for extending the methods of the previous section. The proof and additional details are in App.~\ref{app:setting2}. The final method for this setting is MDPP-U (Mirror Descent for Periodic Policies - Unknown initial distributions) in Alg.~\ref{alg:main2} in App.~\ref{app:algo_scheme}. We state the main result of MDPP-U in Thm.~\ref{thm:main_periodic_regret_unknown_rhot}.
\begin{proposition}\label{prop:estimate_rhot}
For any $\delta \in (0,1)$, with probability at least $1-2\delta$,
       \[
  \textstyle{  \sum_{t=1}^T \|\rho_t - \tilde{\rho}_t \|_1 \leq (1-\alpha)^{-1} \big[3 N |\mathcal{X}| \sqrt{2 |\mathcal{A}| T \log\big(|\mathcal{X}| |\mathcal{A}| N T/\delta \big)} + 2 N |\mathcal{X}| \sqrt{2 T \log\big(N/\delta \big)}  \big].}
    \]
\end{proposition}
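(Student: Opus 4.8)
The plan is to unroll the recursion $\tilde\rho_{t+1} = \tilde\rho_t \widetilde P^t_{\pi_t}$ against the true recursion $\rho_{t+1} = \rho_t P_{\pi_t}$ and exploit the contraction hypothesis (the strengthened Ass.~\ref{ass:contraction}, now valid for \emph{all} policies) to prevent the per-episode errors from compounding. Concretely, for a fixed episode $t$ I would write the one-step error decomposition
\[
\|\rho_{t+1} - \tilde\rho_{t+1}\|_1
 = \|\rho_t P_{\pi_t} - \tilde\rho_t \widetilde P^t_{\pi_t}\|_1
 \le \|\rho_t P_{\pi_t} - \tilde\rho_t P_{\pi_t}\|_1
   + \|\tilde\rho_t P_{\pi_t} - \tilde\rho_t \widetilde P^t_{\pi_t}\|_1,
\]
where the first term is at most $\alpha\|\rho_t - \tilde\rho_t\|_1$ by contraction (applied to $\pi_t$, which is why we need the all-policies version), and the second term is a one-episode ``transition estimation error,'' namely the discrepancy between the true and estimated $N$-step kernels both acting on the \emph{estimated} distribution $\tilde\rho_t$. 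Iterating this bound from $\tilde\rho_1 = \rho = \rho_1$ gives $\|\rho_t - \tilde\rho_t\|_1 \le \sum_{s<t}\alpha^{t-1-s}\,\varepsilon_s$ where $\varepsilon_s := \|\tilde\rho_s P_{\pi_s} - \tilde\rho_s \widetilde P^s_{\pi_s}\|_1$, and then summing over $t$ and swapping the order of summation yields $\sum_{t=1}^T\|\rho_t - \tilde\rho_t\|_1 \le (1-\alpha)^{-1}\sum_{s=1}^T \varepsilon_s$, which already produces the $(1-\alpha)^{-1}$ prefactor.

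It then remains to bound $\sum_{s=1}^T \varepsilon_s$. Here I would telescope the $N$-step kernel difference into a sum of $N$ one-step differences along the estimated trajectory: writing $\widetilde P^s_{\pi_s}$ and $P_{\pi_s}$ as products of the per-timestep stochastic matrices $\widetilde P^s_{\pi_s,n}$ and $P^{\pi_s}_n$ (notation from Eq.~\eqref{eq:markov_chain_proba}), a standard hybrid argument bounds $\varepsilon_s$ by $\sum_{n=1}^N$ of the $L_1$ distance between a one-step propagated distribution under $\tilde p$ versus under $p$, which in turn is controlled by $\sum_{x,a}(\text{mass at }(x,a))\,\|\tilde p_{s,n+1}(\cdot|x,a) - p_{n+1}(\cdot|x,a)\|_1$. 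Now the concentration bound of Lemma~\ref{lemma:proba_difference} (or the estimate in Eq.~\eqref{eq:proba_est} applied to the restarted agent's counts) controls $\|\tilde p_{s,n}(\cdot|x,a) - p_n(\cdot|x,a)\|_1$ by roughly $|\mathcal{X}|^{1/2}C_\delta / \sqrt{\max\{1,\tilde N_{s,n}(x,a)\}}$ with probability $1-\delta$, where $\tilde N$ counts visits of the restarted agent. Crucially — and this is why the reset-to-$\tilde\rho_s$ mechanism is introduced — the restarted agent's visit counts are exactly the counts of trajectories started from $\tilde\rho_s$ under $\pi_s$, so $\sum_s(\text{mass of }\tilde\rho_s\text{-trajectory at }(x,a)) \cdot (\text{count}^{-1/2})$ is a sum we can bound by the usual pigeonhole/Cauchy–Schwarz device $\sum_s a_s/\sqrt{\sum_{r\le s}a_r} \le 2\sqrt{\sum_s a_s}$, giving $\sum_{s=1}^T \varepsilon_s = \tilde O(N|\mathcal{X}|\sqrt{|\mathcal{A}|T})$ for the estimation-error part. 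The second term in the stated bound, $2N|\mathcal{X}|\sqrt{2T\log(N/\delta)}$, should arise from a separate martingale/Azuma argument accounting for the sampling randomness of the single restarted trajectory per episode (the gap between the empirical one-step propagation realized and its conditional expectation), contributing the second $\delta$ in the $1-2\delta$ confidence level.

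The main obstacle I anticipate is the hybrid/telescoping step that reduces the $N$-step kernel discrepancy $\varepsilon_s$ to a weighted sum of one-step kernel errors while keeping the weights equal to the \emph{estimated}-trajectory occupancy measure (so that the counts $\tilde N_{s,n}$ appearing in the concentration bound match the measure multiplying them); getting this alignment right is exactly what forces both the all-policies contraction and the reset-to-$\tilde\rho_s$ assumptions, and a naive bound that uses the true occupancy $\mu^{\pi_s,\rho_s}$ instead would not close because those visit counts are not the ones driving $\tilde p$. A secondary technical point is handling the $\max\{1,\cdot\}$ truncation in the denominators and the low-count regime uniformly in $(x,a,n)$, which is routine but needs the union bound baked into $C_\delta$.
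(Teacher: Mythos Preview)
Your proposal is correct and follows essentially the same route as the paper: the paper also reduces to $\|\rho_t - \tilde\rho_t\|_1 \le \sum_{s<t}\alpha^{t-1-s}\,\|\tilde\rho_s(P_{\pi_s}-\widetilde P^s_{\pi_s})\|_1$ (it gets there by a direct product-telescoping identity rather than your one-step recursion, but the resulting bound is identical), then controls $\sum_s\varepsilon_s$ by decomposing the $N$-step kernel discrepancy via Lemma~\ref{lemma:bound_norm_mu_diff_mu0} into per-timestep errors weighted by the true-dynamics occupancy started at $\tilde\rho_s$, and applies Prop.~\ref{prop:mdp_martingale} to the restarted agent's trajectories---which packages exactly the harmonic-sum-plus-Azuma split you describe and is the source of the two terms and the $1-2\delta$ confidence.
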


\textbf{Discussion.}
Deriving theoretical results in complex settings usually starts with simplifying assumptions. In the online infinite-horizon setting, non-asymptotic guarantees have so far been obtained only under known dynamics, since estimating them without resets can lead to errors accumulating linearly as agents drift from their initial distribution. In our analysis in Sec.~\ref{sec:algorithm}, we show that if the initial distribution is known at each episode, with $M>1$ homogeneous agents and a well-posed constrained problem, the dynamics can be reliably estimated from independent trajectories. When the initial distribution is unknown, additional exploration strategies are necessary. We demonstrate that allowing a single random agent to be reset each episode from a given distribution is sufficient to recover asymptotic guarantees even with unknown dynamics. This requirement is far weaker than resetting all agents simultaneously: for example, in the energy demand control scenario of Sec.~\ref{sec:introduction}, some consumers may have contracts permitting utilities to reset their water-heater temperature daily, whereas resetting all devices is impractical. Moreover, we believe that our analysis can provide insights for designing less constrained exploration strategies in future works.

Regarding the regret dependencies in Thm.~\ref{thm:main_periodic_regret_unknown_rhot}, since the MDPP-U algorithm does not assume knowledge of the initial distribution at each episode, an additional quantity must be learned. This results in an increased dependence on the mixing time in the regret bound. Nevertheless, this dependence remains polynomial. 

\begin{theorem}\label{thm:main_periodic_regret_unknown_rhot}
    Running MDPP-U (Alg.~\ref{alg:main2}) for $T$ episodes in an MDP with unknown transition kernel $p$ and initial state-action distribution $\rho$, against adversarial objectives $\smash{F_t := \sum_{n=1}^N f_{t,n}}$, where each $f_{t,n}$ is convex and $\ell$-Lipschitz with respect to the norm $\|\cdot\|_1$, with parameter $\alpha$, the optimal choice of $\eta$, \( \|\nabla \psi(\mu_t)\|_{1, \infty} \leq \Psi \), guarantees that, for any periodic policy $\pi$, with high probability, 
    \[
\textstyle{R_T(\pi) \leq \tilde{O} \big(  \ell N^3 (1-\alpha)^{-1} |\mathcal{X}|^{5/4} |\mathcal{A}|^{1/4} \Psi^{1/2} T^{3/4} +  \ell N^4 (1-\alpha)^{-2} |\mathcal{X}|^{2} \sqrt{|\mathcal{A}| T}   \big).}
\]
\end{theorem}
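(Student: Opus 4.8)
\textbf{Proof proposal for Theorem~\ref{thm:main_periodic_regret_unknown_rhot}.}
The plan is to mirror the decomposition used for MDPP-K (Theorem~\ref{thm:main_periodic_regret_known_rhot}) and track the additional error introduced by replacing the true initial distribution $\rho_t$ by its estimate $\tilde{\rho}_t$. First I would decompose $R_T(\pi)$ exactly as in Sec.~\ref{sec:regret_analysis}, but with $\mu_t := \hat{\mu}^{\pi_t, \tilde{\rho}_t}_t$ now the distribution induced by the estimated initial law, obtaining the terms $R_T^{\text{MDP}}$, $R_T^{\text{MD}}$, $R_T^{\text{diff. }\rho_t}$, and $R_T^{\text{bonus}}$. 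The key new ingredient is Prop.~\ref{prop:estimate_rhot}, which controls $\sum_t \|\rho_t - \tilde{\rho}_t\|_1$ by $\tilde{O}((1-\alpha)^{-1} N |\mathcal{X}| \sqrt{|\mathcal{A}| T})$; this is the replacement for the exact knowledge of $\rho_t$ used before. I would also need the analogue of Lemma~\ref{lemma:feasibility} and Lemma~\ref{lemma:almost_equal_dist} in this setting: feasibility still holds because the constraint in Eq.~\eqref{iteration_md_solver} is built with $\tilde{\rho}_t$ in place of $\rho_t$, and the strengthened version of Ass.~\ref{ass:contraction} (now for all policies) lets the contraction argument go through with the same structure, at the cost of an extra factor of $(1-\alpha)^{-1}$ because the drift of $\rho_t$ from $\rho$ is now driven both by the transition-estimate error and by the $\tilde{\rho}_t$-estimation error.

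The main work is re-deriving the bounds on $R_T^{\text{MD}}$ and $R_T^{\text{diff. }\rho_t}$. For $R_T^{\text{MD}}$ I would follow the proof of Prop.~\ref{prop:bound_md_term}, but wherever that proof compares $\hat{\mu}^{\pi,\rho_t}_t$ to $\hat{\mu}^{\pi,\rho}_t$ or controls the divergence between consecutive iterates through $\|\rho_t - \rho\|_1$, I would instead route through $\|\tilde{\rho}_t - \rho\|_1 \le \|\tilde{\rho}_t - \rho_t\|_1 + \|\rho_t - \rho\|_1$ and apply Prop.~\ref{prop:estimate_rhot} together with the (modified) Lemma~\ref{lemma:almost_equal_dist}. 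Each such substitution costs one more $(1-\alpha)^{-1}$ factor relative to Framework~1, which is exactly the source of the $(1-\alpha)^{-1}$ on the leading $T^{3/4}$ term and $(1-\alpha)^{-2}$ on the $\sqrt{T}$ term in the statement. For $R_T^{\text{diff. }\rho_t}$ I would reprove Prop.~\ref{prop:diff_rho_rhot} in App.~\ref{app:setting2}: the quantity $\langle \ell_t - \bar{b}_t, \hat{\mu}^{\pi,\tilde{\rho}_t}_t - \hat{\mu}^{\pi,\rho}_t\rangle$ is bounded by Lipschitzness and the telescoping-of-dynamics argument by $(\ell + \|\bar b_t\|_\infty) N \|\tilde{\rho}_t - \rho\|_1$, and then summing over $t$ and inserting Prop.~\ref{prop:estimate_rhot} and Lemma~\ref{lemma:almost_equal_dist} yields the claimed $\ell N^4 (1-\alpha)^{-2} |\mathcal{X}|^2 \sqrt{|\mathcal{A}| T}$.

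The term $R_T^{\text{MDP}}$ requires a separate argument, and I expect this to be the main obstacle. In Framework~1 the concentration of $\hat p_t$ on the pairs actually visited was immediate because agents really start from $\rho_t$ and we observe independent trajectories from $\rho_t$; here the MD iteration is run on the estimated MDP $\hat p_t$ with start $\tilde{\rho}_t$, so $R_T^{\text{MDP}} = \sum_t F_t(\mu^{\pi_t,\rho_t}) - F_t(\hat\mu^{\pi_t,\tilde{\rho}_t}_t)$ mixes a transition-estimation error and an initial-distribution-estimation error. I would split it as $\sum_t [F_t(\mu^{\pi_t,\rho_t}) - F_t(\hat\mu^{\pi_t,\rho_t}_t)] + \sum_t [F_t(\hat\mu^{\pi_t,\rho_t}_t) - F_t(\hat\mu^{\pi_t,\tilde{\rho}_t}_t)]$; the first sum is $\tilde O(\sqrt T)$ exactly as in App.~\ref{prop:R_T_mdp} using the independent (non-reset) trajectories to bound $\hat p_t$ against $p$ via Lemma~\ref{lemma:proba_difference}, and the second is $\le \ell \sum_t \|\hat\mu^{\pi_t,\rho_t}_t - \hat\mu^{\pi_t,\tilde{\rho}_t}_t\|_{\infty,1} \le \ell N \sum_t \|\rho_t - \tilde{\rho}_t\|_1$, again closed by Prop.~\ref{prop:estimate_rhot}. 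Assembling all four terms, choosing $\eta$ to balance $\Psi/\eta$ against $\eta$ times the squared-subgradient bound $\tilde O(\ell^2 N^2 |\mathcal{X}|^{1/2} |\mathcal{A}|^{1/2}\sqrt{T})$ arising from the bonus and the $\rho_t$-drift, and collecting the $(1-\alpha)$ powers gives the stated bound; the bookkeeping of the numerous $N$, $|\mathcal{X}|$, $|\mathcal{A}|$ exponents through the chained substitutions is the most error-prone part.
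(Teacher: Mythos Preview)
Your proposal is essentially correct and tracks the paper's argument closely: the same regret decomposition, the same recognition that Prop.~\ref{prop:estimate_rhot} is the key new ingredient, the same split of $R_T^{\text{MDP}}$ into a pure transition-error piece and a $\|\rho_t-\tilde\rho_t\|_1$ piece, and the same accounting that each substitution costs an extra $(1-\alpha)^{-1}$.

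The one place where your route differs from the paper is the order in which you control the drift quantities. You propose to bound $\|\tilde\rho_t-\rho\|_1$ via the triangle inequality $\|\tilde\rho_t-\rho\|_1\le\|\tilde\rho_t-\rho_t\|_1+\|\rho_t-\rho\|_1$, using Prop.~\ref{prop:estimate_rhot} for the first term and a ``modified Lemma~\ref{lemma:almost_equal_dist}'' for the second. The paper does the reverse: since the algorithm's constraint is stated in terms of $\tilde\rho_t$, the natural recursion is on $\|\tilde\rho_t-\rho\|_1$ itself, and the paper proves this directly (Lemma~\ref{lemma:diff_estimate_rhot_true_rho}), obtaining $\sum_t\|\tilde\rho_{t+1}-\rho\|_1=\tilde O\big((1-\alpha)^{-2}N^2|\mathcal{X}|^{3/2}\sqrt{|\mathcal{A}|T}\big)$, and only afterwards deduces $\sum_t\|\rho_t-\rho\|_1$ by triangle inequality (Corollary~\ref{cor:diff_rho_rhot}). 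Your order can be made to work, but the recursion for $\|\rho_{t+1}-\rho\|_1$ picks up extra $\|\rho_t-\tilde\rho_t\|_1$ terms (because the constraint references $\tilde\rho_t$, not $\rho_t$), so you would need to carry those through and close them with Prop.~\ref{prop:estimate_rhot} at the end. The paper's order avoids this cross-coupling and is cleaner; the $(1-\alpha)^{-2}$ in Lemma~\ref{lemma:diff_estimate_rhot_true_rho} also comes from a nested use of Corollary~\ref{cor:bonus_estimate_rhot} (which itself costs $(1-\alpha)^{-1}$), not only from the geometric sum, which is worth tracking explicitly.
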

%Note that the periodic regret bound for framework 2 matches that of framework 1 in \( T \) and in the MDP parameters, with an additional \( 1/(1-\alpha) \) term from the contraction assumption.

\section{Experiments}\label{sec:experiments}
We evaluate MDPP-K and MDPP-U on the \emph{max-entropy} and \emph{obstacle} tasks from \cite{geist2022}, without resets and with the added goal of returning agents to the initial state at the end of the episode. These tasks use a fixed objective. Adversarial MDPs are harder to implement due to the challenge of finding optimal stationary policies and limited benchmarks. The tasks under consideration could represent, for example, autonomous robots in a warehouse aiming to maximize task completion while ensuring they return to their charging stations at the end of each episode. For instance, in the \emph{obstacle} experiment, the robot’s objective is to reach a yellow target (representing a task) and then autonomously return to its initial state (the charging station) by the episode’s end.\footnote{All the code to reproduce the empirical results
is available at: \url{https://github.com/biancammoreno/OnlineMDPConstraintsLaw/}.}

We compare our periodic framework with the episodic framework, using the Bonus O-MD-CURL baseline from \cite{moreno_icml}, adding a regularization term $\gamma \| \rho_t - \rho\|_1$ to the objective of the episodic algorithm to allow fair comparison. Note that incorporating a $L_1$ regularizer with respect to the final distribution is only possible for an episodic algorithm in the convex RL case, as this regularizer is not a linear function of the state-action distribution. 
\begin{figure}
    \centering
    \begin{minipage}{.5\textwidth}
    \centering
    \subfigure[]{ \includegraphics[scale=0.2]{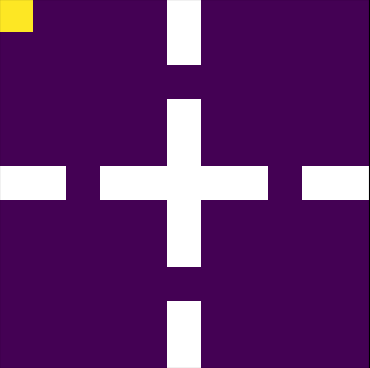}}
     \subfigure[]{ \includegraphics[scale=0.2]{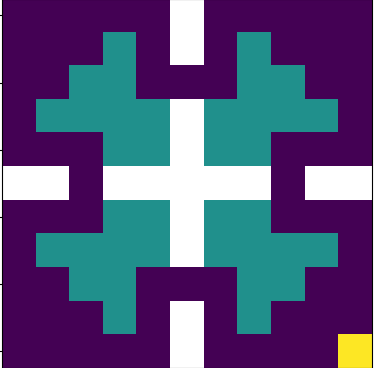}}
     \caption{[left] Initial agent dist.; [right] \textit{Obstacles} (reward in yellow, constraints in blue).}
        \label{fig:auxiliary_images}
    \end{minipage}%
    \begin{minipage}{0.5\textwidth}
        \centering
     \subfigure[]{ \input{img/max_entropy/periodic_regret_entropy_max.txt}}
     \subfigure[]{ \input{img/constrained/periodic_regret_constrained.txt}}
    \caption{Periodic regret for each environment.}
    \label{fig:regrets}
    \end{minipage}
\end{figure}

The environment is an $11 \times 11$ four-room grid world, where rooms are connected by single doors. The agent moves in cardinal directions or stays in place: $x_{n+1} = x_n + a_n + \varepsilon_n$, where $\varepsilon_n$ adds random displacement to a neighboring state. The initial state distribution is a Dirac located in the upper-left corner of the grid, as in Fig.~\ref{fig:auxiliary_images} [left]. We set the learning rate $\eta = 0.01$, the dual learning rate $\eta_\lambda = 0.01$, the contraction parameter $\alpha = 0.1$, and $\gamma = 1000$. \textit{Max-entropy:} The objective is to maximize entropy, aiming to reach a uniform distribution over the state space. At each step, $f_{t,n}(\mu_n^{\pi,\rho_t}) := \langle \mu_n^{\pi,\rho_t}, \log(\mu_n^{\pi,\rho_t}) \rangle$, and we take $N=40$. \textit{Obstacles:} The goal is to concentrate the state distribution on the yellow target in Fig.~\ref{fig:auxiliary_images} [right] while avoiding the constraint states in blue. The objective function is defined as \(\textstyle{ f_{t,n}(\mu_n^{\pi,\rho_t}) := -\langle r, \mu_n^{\pi,\rho_t} \rangle + (\langle \mu_n^{\pi,\rho_t}, c \rangle)^2 }\), where \(\smash{ r, c \in \mathbb{R}^{|\mathcal{X}| \times |\mathcal{A}|}_+ }\). Here, \( r \) and \(c\) are zero everywhere except at the target and constraint states respectively, and $N = 80$.
 
\begin{figure}[t!]
\centering

\subfigure[$n = 1$]{ \includegraphics[scale=0.18]{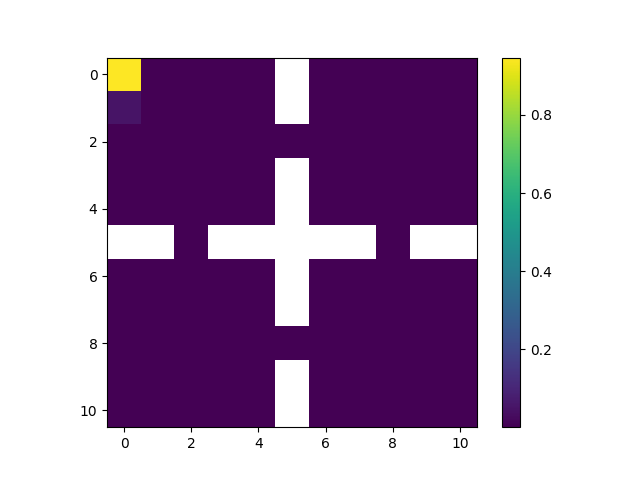}}
\subfigure[$n = 14$]{\includegraphics[scale=0.18]{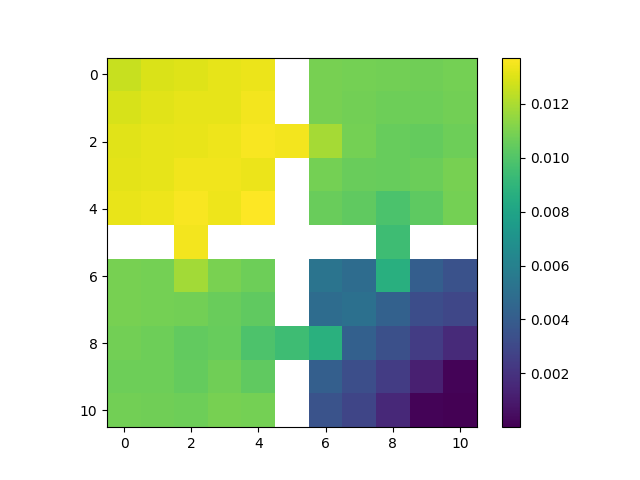}}
\subfigure[$n = 24$]{\includegraphics[scale=0.18]{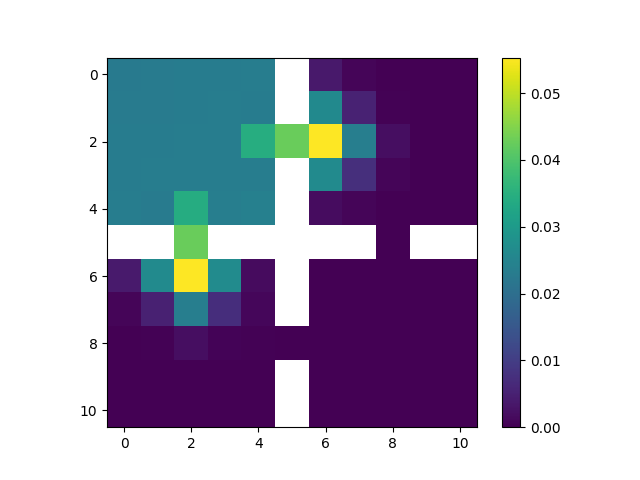}}
\subfigure[$n = 30$]{\includegraphics[scale=0.18]{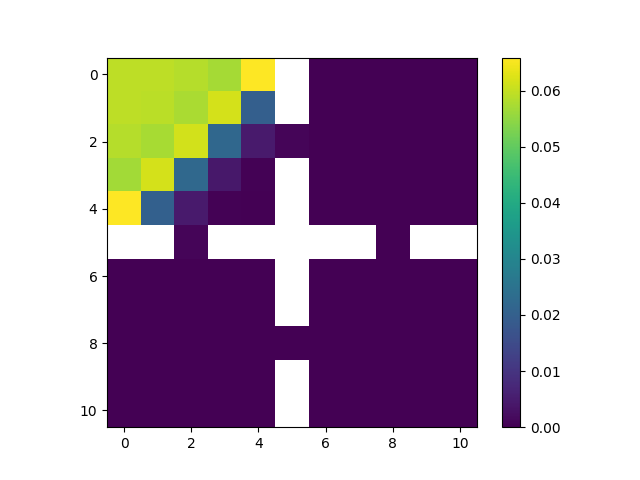}}
\subfigure[$n = 40$]{\includegraphics[scale=0.18]{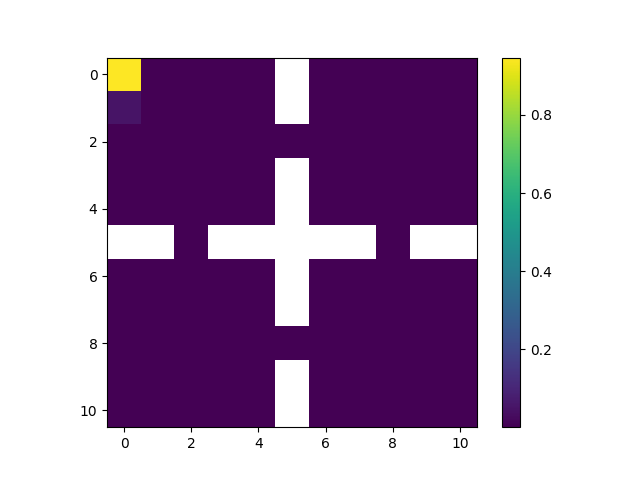}}

\subfigure[$n = 1$]{\includegraphics[scale=0.18]{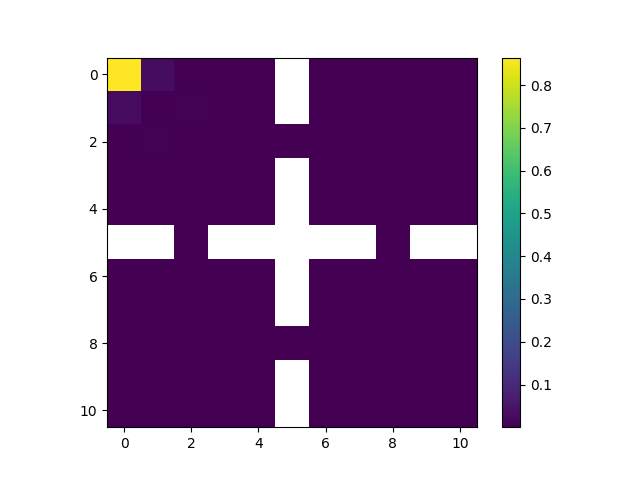}}
\subfigure[$n = 14$]{\includegraphics[scale=0.18]{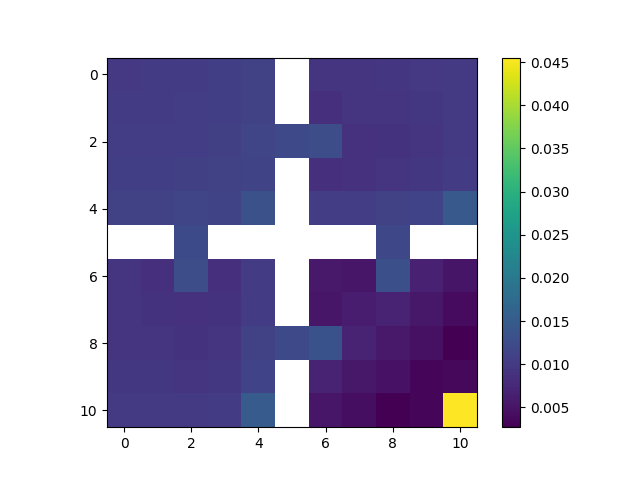}}
\subfigure[$n = 24$]{\includegraphics[scale=0.18]{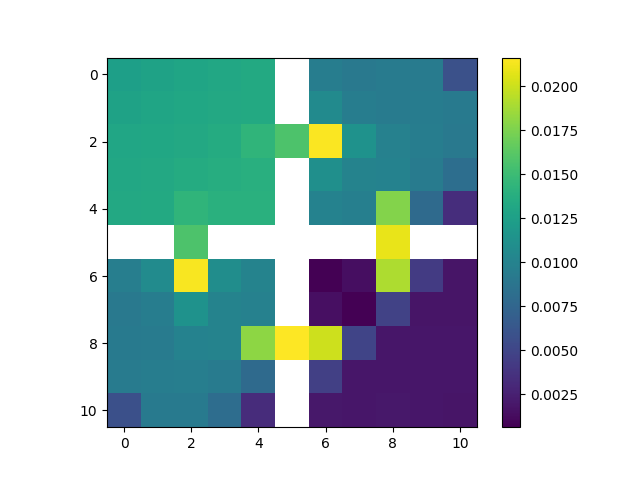}}
\subfigure[$n = 30$]{\includegraphics[scale=0.18]{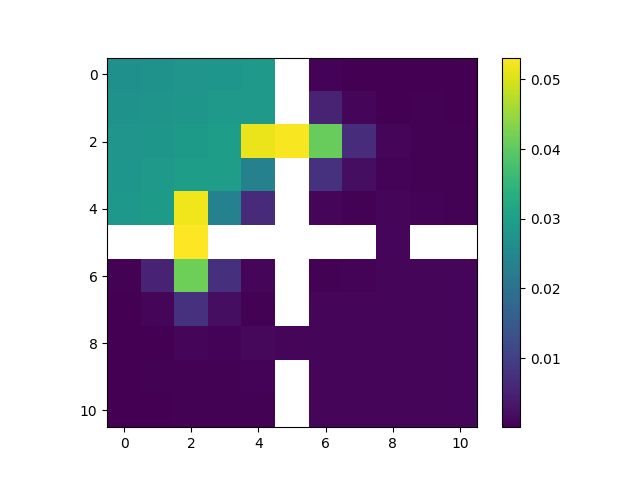}}
\subfigure[$n = 40$]{\includegraphics[scale=0.18]{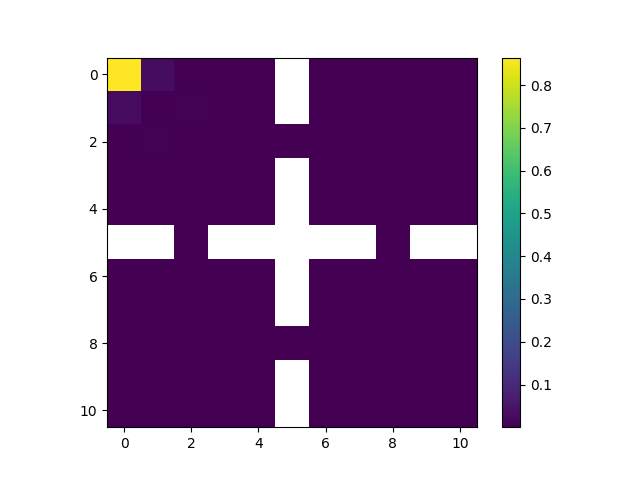}}

    \caption{\emph{Max-entropy}: state dist. after $5000$ eps. of MDPP-K [up] and Bonus MD-CURL [down].}
    \label{fig:max_entropy}
\end{figure}

% \begin{figure}[t!]
% \centering
%     \begin{subfigure}[ht]{0.18\textwidth}
%         \centering
%          \includegraphics[scale=0.18]{}
%          \caption{$n = 1$}
%     \end{subfigure}
%     \begin{subfigure}[ht]{0.18\textwidth}
%         \centering
%           \includegraphics[scale=0.18]{}
%           \caption{$n = 50$}
%     \end{subfigure}
%         \begin{subfigure}[ht]{0.18\textwidth}
%         \centering
% \includegraphics[scale=0.18]{}
% \caption{$n = 68$}
%     \end{subfigure}
%         \begin{subfigure}[ht]{0.18\textwidth}
%         \centering
% \includegraphics[scale=0.18]{}
% \caption{$n = 80$}
%     \end{subfigure}
%     \begin{subfigure}[ht]{0.18\textwidth}
%         \centering
% \input{img/multi_objectives/periodic_regret_multi_objectives.txt}
%     \end{subfigure}
%     \caption{Multi-objective task: state distribution after $1000$ iterations and the periodic regret.}
%     \label{fig:multi_obj}
% \end{figure}

\begin{figure}[t!]
\centering
\subfigure[$n = 1$]{ \includegraphics[scale=0.17]{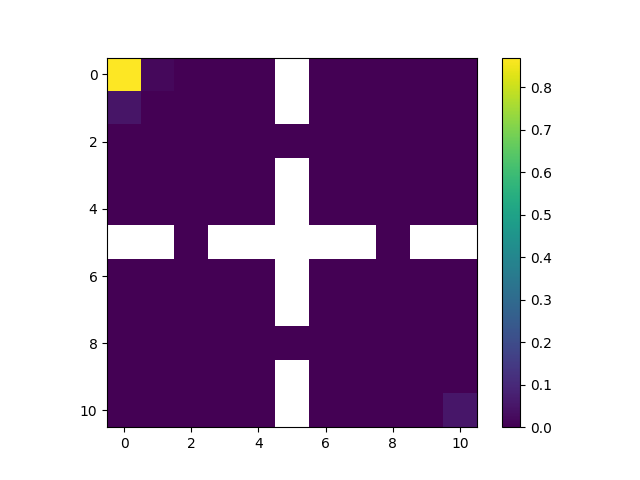}}
\subfigure[$n = 20$]{ \includegraphics[scale=0.17]{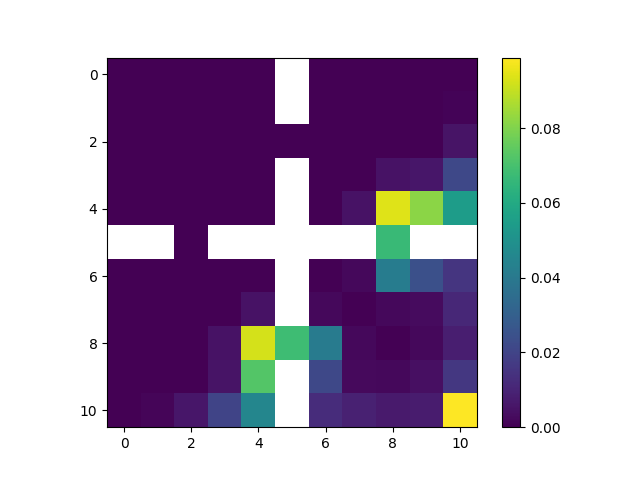}}
\subfigure[$n = 30$]{ \includegraphics[scale=0.17]{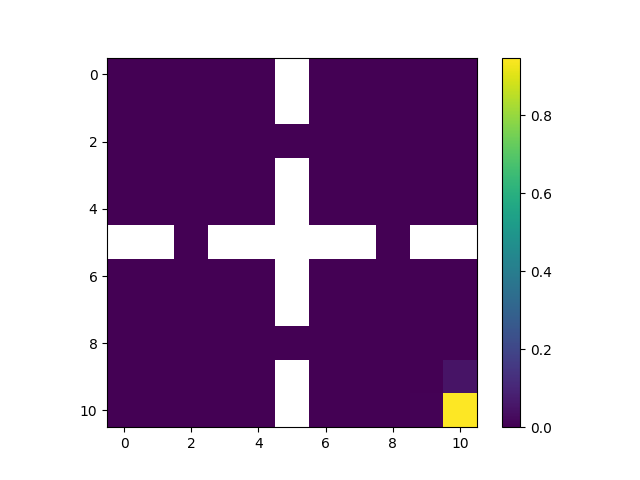}}
\subfigure[$n = 70$]{ \includegraphics[scale=0.17]{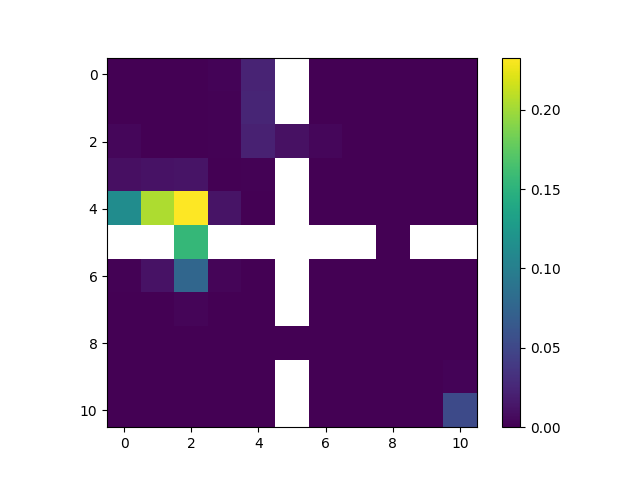}}
\subfigure[$n = 80$]{ \includegraphics[scale=0.17]{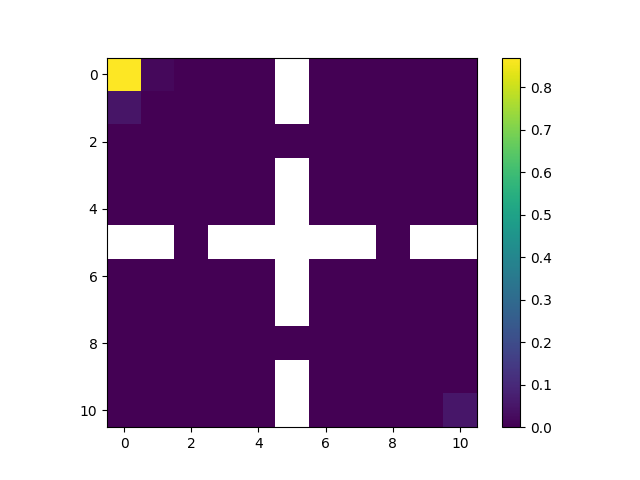}}

\subfigure[$n = 1$]{ \includegraphics[scale=0.17]{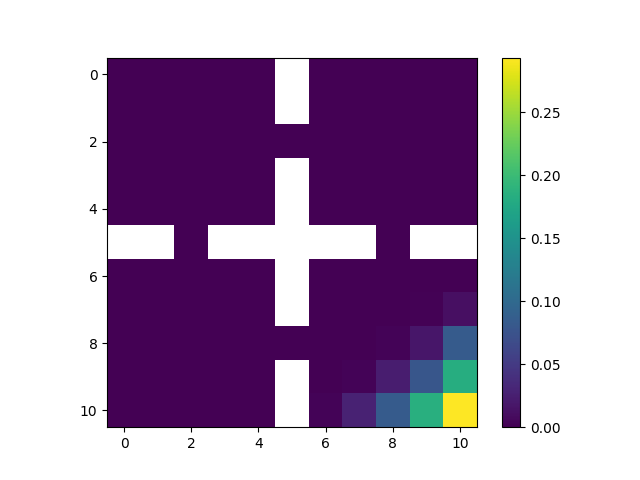}}
\subfigure[$n = 20$]{ \includegraphics[scale=0.17]{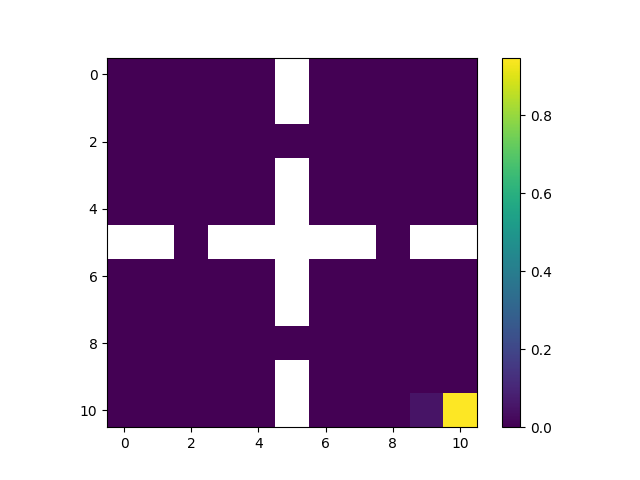}}
\subfigure[$n = 40$]{ \includegraphics[scale=0.17]{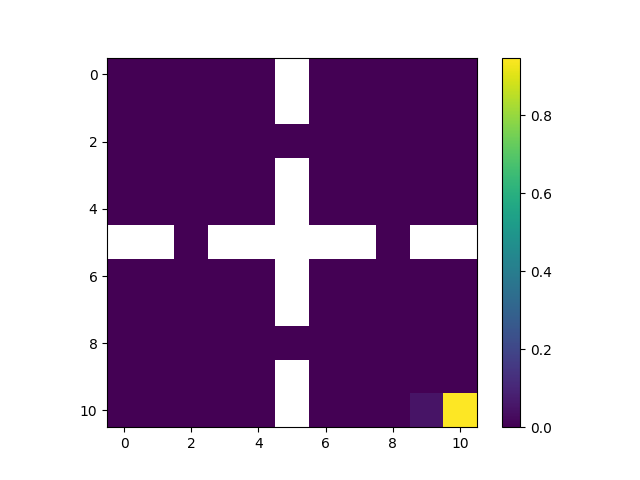}}
\subfigure[$n = 70$]{ \includegraphics[scale=0.17]{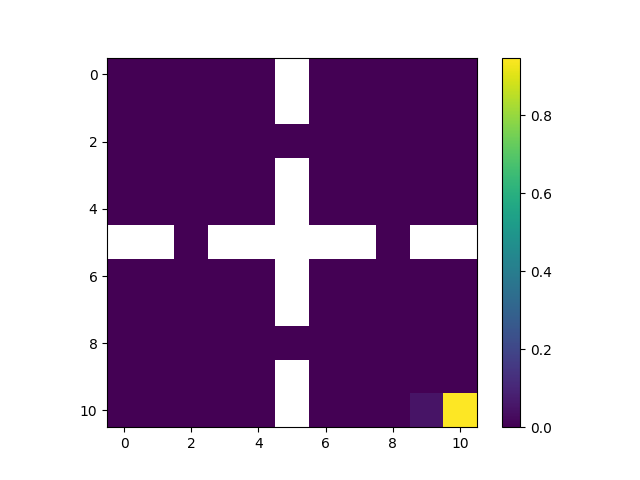}}
\subfigure[$n = 80$]{ \includegraphics[scale=0.17]{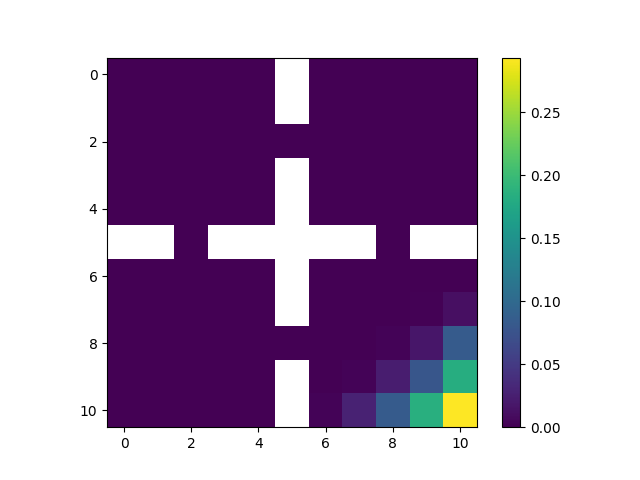}}
    
    \caption{\emph{Obstacles}: state dist. after $1000$ eps. for MDPP-K [up] and Bonus MD-CURL [down].}
    \label{fig:constrained}
\end{figure}

We plot the state distributions at different time steps in Fig.~\ref{fig:max_entropy}, and Fig.~\ref{fig:constrained}, comparing MDPP-K [up] with Bonus O-MD-CURL [down]. For \emph{max-entropy}, both methods lead agents to reach their goals midway through the episode and return to the initial state by the end. For the \emph{obstacle} task, MDPP-K reaches the target and then returns to the initial state, whereas the episodic approach becomes trapped in the target square. To compute the optimal periodic policy in the periodic regret, we solve the constrained problem in Eq.~\eqref{offline_problem} assuming known dynamics. In Fig.~\ref{fig:regrets} we see that MDPP-K and MDPP-U both achieve sub-linear regret, with MDPP-U suffering higher regret as it also has to estimate $\rho_t$. In contrast, Bonus O-MD-CURL incurs linear regret on both tasks.

Regularizers perform poorly in an episodic algorithm because such algorithms assume agents are reset at the end of each episode, an assumption that does not hold in our experiments. This leads to practical errors and makes tuning the regularization parameter difficult. For \emph{max-entropy}, we observe that the regularizer $\gamma \|\rho_t - \rho\|_1$ is too strong, preventing correct optimization, while that in the \emph{obstacle} task, it is to weak, failing to ensure a return to the initial state. Since agents actually start from a different distribution $\rho_t$ rather than the assumed $\rho$, a fixed regularizer is ineffective; an adaptive $\gamma_t$ would be needed, but computing it is infeasible without known dynamics. Our method, in contrast, effectively implements an implicit adaptive regularization, handling imperfect resets by automatically adjusting the Lagrange multipliers at each episode.

%While our two approaches maintain sublinear regret, the episodic method suffers from linear periodic regret, likely due to convergence to a local minimum. This shortcoming arises because the episodic algorithm optimizes under the assumption that agents are reset to the initial state distribution in the end of each episode, an assumption that is not true in practice. Especially during early iterations when dynamics are still uncertain, if agents drift too far from the initial distribution, the episodic method cannot recover, even with a regularization added in the objective function. We also observe that Alg.~\ref{alg:main2} eventually learns the initial state distribution at each episode, and matches the performance of Alg.~\ref{alg:main1} after a few iterations. Additional plots in Appendix~\ref{app:extra_experiments} illustrate that, during the early episodes when the dynamics are still being learned, the agent has difficulty returning to the initial state.

% Acknowledgments---Will not appear in anonymized version
\acks{This work was supported by funding from the French government, managed by the National Research Agency (ANR), under the France 2030 program, reference ANR-23-IACL-0006.}

\newpage
\bibliography{bib}

\newpage
\appendix

\hrule
\begin{center}
    {\huge \bfseries Appendix}
\end{center}
\hrule

\vspace*{0.5cm}

This appendix is organized as follows: 
\begin{itemize}[nosep, label={--}]
    \item Appendix~\ref{notations}: summary of the notations used in the main paper and in the analysis
    \item Appendix~\ref{app:related_work}: additional discussion on related work
    \item Appendix~\ref{app:algo_scheme}: algorithms schemes for MDPP-K and MDPP-U
    \item Appendix~\ref{app:auxiliary}: auxiliary lemmas
    \item Appendix~\ref{app:feasibility}: proofs of the feasibility results in the main paper
    \item Appendix~\ref{app:md_proof}: details on the upper bound of the Mirror descent regret term
    \item Appendix~\ref{app:main_result}: proof of the main result in Sec.~\ref{sec:algorithm} - MDPP-K
    \item Appendix~\ref{app:practical_sol}: details on the practical implementation of the algorithms
    \item Appendix~\ref{app:setting2}: proofs of Sec.~\ref{sec:algorithm_unknown_rhot} - MDPP-U
\end{itemize}

The code for reproducing the experiments in Sec.~\ref{sec:experiments} can be found at: \url{https://github.com/biancammoreno/OnlineMDPConstraintsLaw/}

\section{List of Notations}\label{notations}
In this appendix, we recall all notations that are used throughout the paper.

Below are generic notations for any finite sets $\mathcal{S}, \mathcal{D}$:

\begin{itemize}
    \item $[s] := \{1, \ldots, s\}$
    \item $|\mathcal{S}| = $ cardinality of a set $\mathcal{S}$
    \item $\Delta_\mathcal{S} := \left\{ v \in \mathbb{R}^{|\mathcal{S}|} \mid \sum_{i=1}^{|\mathcal{S}|} v(i) = 1 \right\}$
    \item $(\Delta_{\mathcal{S}})^\mathcal{D} := \left\{ v = (v_d)_{d \in [D]} \mid  v_d \in \mathbb{R}^{|\mathcal{S}|}, \; \sum_{i=1}^{|\mathcal{S}|} v_d(i) = 1, \forall d \in [D] \right\}$
    \item Given functions $f, g : \mathbb{N} \rightarrow [0, \infty)$ define $f(n) = O\big(g(n) \big) \Leftrightarrow \limsup_{n \to \infty} \frac{f(n)}{g(n)} < \infty$ 
    \item $f(n) = \tilde{O}(g(n))  \Leftrightarrow \limsup_{n \to \infty} \frac{f(n) }{g(n) \log^p(n)} < \infty$, for some power $p \in \mathbb{N}$
    \item $\|\cdot\|_1$ is the $L_1$ norm
    \item For all $v := (v_n)_{n \in [N]}$, such that $\smash{v_n \in \mathbb{R}^{\mathcal{X} \times \mathcal{A}}}$, we define $\|v\|_{\infty,1} := \sup_{n \in [N]} \|v_n\|_1.$
\end{itemize}

Below are notations related to the Markov decision process:

\begin{itemize}
    \item $\mathcal{X} $ finite state space
    \item $\mathcal{A}$ finite action space
    \item $N := $ episode length
    \item $n := $ time step index within an episode, $n \in [N]$
    \item $\pi := (\pi_n)_{n \in [N]}$ with, for all $n \in [N], x \in \mathcal{X}$, $\pi_n(\cdot|x) \in \Delta_\mathcal{A} =$ policy at time step $n$ conditioned on state $x$
    \item $\Pi := (\Delta_{\mathcal{A}})^{N \times \mathcal{X}} = $ space of policies (defined using the generic notations from above)
    \item $p := (p_n)_{n \in [N]} = $ true probability transition kernel with, for all $n \in [N], (x,a) \in \mathcal{X} \times \mathcal{A}$, $p_n(\cdot|x,a) \in \Delta_\mathcal{X}$
    \item $\rho \in \Delta_{\mathcal{X} \times \mathcal{A}}=$ target initial state-action distribution
    \item For any $\pi \in \Pi$, and $\nu \in \Delta_{\mathcal{X} \times \mathcal{A}}$, $\mu^{\pi,\nu} := (\mu_n^{\pi,\nu})_{n \in [N]}$, where for all $(x,a)$, $\mu_0^{\pi,\nu}(x,a) = \nu(x,a)$, and for all $n$,
    \[
\mu_{n+1}^{\pi, \nu}(x,a) := \sum_{x',a'} \mu_n^{\pi, \nu}(x',a') p_{n+1}(x|x', a') \pi_{n+1}(a|x)
    \]
    \item For any $\nu \in \Delta_{\mathcal{X} \times \mathcal{A}}$, and for any probability transition $p$, 
    \[
    \mathcal{M}^p_{\nu} :=  \bigg\{ \mu \big| \; \mu_0 = \nu \;, \sum_{a' \in \mathcal{A}} \mu_{n}(x',a') = \sum_{x \in \mathcal{X} , a \in \mathcal{A}} p_{n}(x'|x,a) \mu_{n-1}(x,a)\;, \forall x' \in\mathcal{X}, 
         \forall n \in [N] \bigg\} 
         \]
         \item  For all $n \in [N]$, for $y_n := (x_n,a_n)$, and for all policy $\pi \in \Pi$, $P^\pi_n(y_{n-1}, y_n) := p_n(x_n|y_{n-1}) \pi_n(a_n|x_n)$ 
         \item $P_\pi(y_0, y_N) := \sum_{y_1}P^\pi_1(y_0, y_1) \ldots \sum_{y_{N-1}} P^\pi_{N-1}(y_{N-2}, y_{N-1}) P^\pi_N(y_{N-1}, y_N)$
         \item For all $\nu \in \Delta_{\mathcal{X} \times \mathcal{A}}$, $\nu P_\pi(x,a) = \sum_{x',a'} \nu(x',a') P_\pi(x',a'; x,a)$
         \item $M$ = number of agents
         \item $\alpha$ = contraction parameter defined in Ass.~\ref{ass:contraction}
\end{itemize}

Below are notations related to the online settings:

\begin{itemize}
	\item $T :=$ number of episodes
	\item $t:= $ episode index, $t \in [T]$
		    \item $\pi_t :=$ policy played at episode $t$ associated to the state-action distribution sequence solution of Eq.~\eqref{iteration_md_solver} at episode $t$
		\item $\rho_t \in \Delta_{\mathcal{X} \times \mathcal{A}}=$ initial state-action distribution at episode $t$
	\item $\hat{p}_t$ empirical probability transition kernel estimated at episode $t$ following Eq.~\ref{eq:proba_est}
	\item For any $\pi \in \Pi$, and $\nu \in \Delta_{\mathcal{X} \times \mathcal{A}}$, $\hat{\mu}^{\pi,\nu}_t := (\hat{\mu}_{t,n}^{\pi,\nu})_{n \in [N]}$, where for all $(x,a)$, $\hat{\mu}_{t,0}^{\pi,\nu}(x,a) = \nu(x,a)$, and for all $n$,
    \[
\hat{\mu}_{t,n+1}^{\pi, \nu}(x,a) := \sum_{x',a'} \hat{\mu}_{t,n}^{\pi, \nu}(x',a') \hat{p}_{t,n+1}(x|x', a') \pi_{n+1}(a|x)
    \]
    \item $\mathcal{M}_t := \mathcal{M}^{\hat{p}_t}_{\rho_t}$
    \item $\mu_t := \hat{\mu}^{\pi_t, \rho_t}_t$, the state-action distribution sequence solution of Eq.~\eqref{iteration_md_solver} at episode $t$
    \item $\ell_t := \nabla F_t(\mu_t)$
	\item For all $t \in [T], n \in [N]$, $f_{t,n} : \mathbb{R}^{\mathcal{X} \times \mathcal{A}} \rightarrow \mathbb{R}$ any convex and $\ell$-Lipschitz function with respect to $\|\cdot\|_1$
	\item $F_t := \sum_{n=1}^N f_{t,n}$ 
	\item $\gamma :=$ penalization parameter of the periodic regret defined in Eq.~\eqref{eq:periodic_regret}
    \item $\eta = $ online mirror descent learning rate
    \item $\eta_\lambda = $ dual learning rate
    \item $\lambda = $ Lagrange multiplier
    \item $\tilde{p}_t = $ empirical probability transition kernel estimated using the trajectories of the restarted agent in framework 2
    \item $\tilde{\rho}_t= $ estimate of the initial state-action distribution at episode $t$ used in framework 2
\end{itemize}

\section{Related Work}\label{app:related_work}

Table~\ref{table:comparisons} provides an overview of existing approaches to online MDPs with adversarial losses in both episodic and infinite-horizon settings, highlighting the distinctions with our periodic framework. Below, we discuss additional RL works that share certain similarities with our approach.

\paragraph{Autonomous RL:} Many lines of work in RL address continual learning tasks from a practical point of view, without offering theoretical guarantees; see \cite{autonomous_rl} for an overview. Most of the deep RL approaches to continual learning concentrate on learning to reset. For instance, \cite{eysenbach2017leavetracelearningreset, sharma2022statedistributionmatchingapproachnonepisodic} propose ideas of state distribution matching similar to ours. These methods involve learning both a ``forward policy'' to optimize the task and a ``backward policy'' to drive the agent toward a predefined state distribution. However, they treat task optimization and distribution matching as distinct objectives, and do not assume a fixed episode length in which the agent must jointly minimize cumulative loss and return to the initial distribution by the episode's end, as we do. Furthermore, these works do not have the theoretical guarantees provided by our approach.

As for theoretical works addressing non-episodic settings, they typically focus on optimizing the infinite-horizon average loss \cite{rmax,UCRL-2,regal,neorl}, which differs from minimizing periodic regret. Moreover, these works assume a fixed loss across episodes, in contrast to our setting, which considers an adversarially varying sequence of losses. 

\paragraph{Stochastic control with constraints in law:} Some works on the literature of stochastic control investigate constraints on the probability distribution of the terminal state \cite{daudin,bouchard_elie,quantile,pfeiffer_2020,pfeiffer_2021}. However, these studies primarily focus on deriving necessary optimality conditions under known dynamics. As such, they do not address learning problems, nor do they consider online settings with adversarially changing losses as we do in this paper. Such control problems naturally arise in economics and finance, where an agent seeks to minimize a cost subject to constraints on the terminal distribution (e.g., achieving a specified distribution of terminal wealth \cite{Guo}), as well as in energy management applications, as detailed in the main paper \cite{bourdais}.

\section{Algorithm schemes for both settings}\label{app:algo_scheme}

We outline below the algorithms for computing a sequence of policies that achieve low periodic regret. The procedure is detailed for the two settings considered in the main paper: \textcolor{red}{Framework 1}, where $\rho_t$ is observed at the beginning of each episode; and \textcolor{blue}{Framework 2}, where $\rho_t$ is unobserved and must be estimated by the agent. Steps specific to \textcolor{red}{Framework 1 are highlighted in red}, while those specific to \textcolor{blue}{Framework 2 are highlighted in blue}. All remaining steps are similar for both settings.

\begin{algorithm}[H]
    \caption{\textcolor{red}{MDPP-K: Framework $1$: known initial distributions}}\label{alg:main1}
    \begin{algorithmic}
        \STATE {\bfseries Input:} number of episodes $T$; length of each episode $N$; number of agents $M$; initial state-action distribution $\rho$; initial policy sequence $\pi_1$
        \STATE {\bfseries Initialize:} $\rho_1 = \rho$; $N_{1,n}(x,a) = M_{1,n}(x'|x,a) = 0$; $\hat{p}_{1,n}(x'|x,a) = 1/|\mathcal{X}|$ for all $(n, x,a,x') \in [N] \times \mathcal{X} \times \mathcal{A} \times \mathcal{X}$
        \STATE Each agent $j \in [M]$ starts at $(x_{1,0}^j, a_{1,0}^j) \sim \rho(\cdot)$
        \FOR{$t = 1, \ldots, T$}
            \FOR{$n = 1, \ldots, N$}
                \STATE Environment samples new state $x_{t,n}^j \sim p_n(\cdot|x_{t,n-1}^j, a_{t,n-1}^j)$ for all agents $j \in [M]$
                \STATE Each agent $j \in [M]$ takes an action $a_{t,n}^j \sim \pi_{t,n}(\cdot|x_{t,n}^j)$
            \ENDFOR
            \STATE Uniformly sample an agent $j \in [M]$ to ensure independence of the observed trajectories, and observe its state-action trajectory $(x_{t,n}^j,a_{t,n}^j)_{n \in [N]}$
            \STATE Update the counts using the entire trajectory
            \begin{align*}
        &N_{t+1,n-1}(x_{t,n-1}^j, a_{t,n-1}^j) \gets N_{t,n-1}(x_{t,n-1}^j, a_{t,n-1}) + 1 \\
        &M_{t+1,n-1}(x_{t,n}^j| x_{t,n-1}^j, a_{t,n-1}^j) \gets M_{t,n-1}(x_{t,n}^j| x_{t,n-1}^j, a_{t,n-1}) + 1
    \end{align*}
    \STATE Update the estimate $\hat{p}_{t+1}$ based on the updated counts following Eq.~\eqref{eq:proba_est}
    \STATE Observe the objective function $F_t$ (full-information)
    \STATE Recall that for each agent $j \in [M]$, $(x_{t+1,0}^j, a_{t+1,0}^j) = (x_{t,N}^j, a_{t,N}^j)$
    \STATE \textcolor{red}{Observe $\rho_{t+1} = \rho_t P_{\pi_t}$ (the initial state-action distribution at episode $t+1$)}
    \STATE \textcolor{red}{Compute $\pi_{t+1}$ solving Eq.~\eqref{iteration_md_solver} using $\rho_{t+1}$, $\hat{p}_{t+1}$ and $F_t$}
    \ENDFOR
    \end{algorithmic}
\end{algorithm}

\newpage

\begin{algorithm}[H]
    \caption{\textcolor{blue}{MDPP-U: Framework $2$: unknown initial distributions}}\label{alg:main2}
    \begin{algorithmic}[1]
        \STATE {\bfseries Input:} number of episodes $T$; length of each episode $N$; number of agents $M$; initial state-action distribution $\rho$; initial policy sequence $\pi_1$; \textcolor{blue}{restarted agent, assumed here to be the agent $M$, but in practice it can be a different agent per episode}
        \STATE {\bfseries Initialize:} $\rho_1 = \rho$; \textcolor{blue}{$\tilde{\rho}_1 = \rho$}; $N_{1,n}(x,a) = M_{1,n}(x'|x,a) = 0$; \textcolor{blue}{$\widetilde{N}_{1,n}(x,a) = \widetilde{M}_{1,n}(x'|x,a) = 0$}; $\hat{p}_{1,n}(x'|x,a) = 1/|\mathcal{X}|$; \textcolor{blue}{$\tilde{p}_{1,n}(x'|x,a) = 1/|\mathcal{X}|$} for all $(n, x,a,x') \in [N] \times \mathcal{X} \times \mathcal{A} \times \mathcal{X}$
        \STATE Each agent $j \in [M]$ starts at $(x_{1,0}^j, a_{1,0}^j) \sim \rho(\cdot)$
        \FOR{$t = 1, \ldots, T$}
            \FOR{$n = 1, \ldots, N$}
                \STATE Environment samples new state $x_{t,n}^j \sim p_n(\cdot|x_{t,n-1}^j, a_{t,n-1}^j)$ for all agents $j \in [M]$
                \STATE Each agent $j \in [M]$ takes an action $a_{t,n}^j \sim \pi_{t,n}(\cdot|x_{t,n}^j)$
            \ENDFOR
            \STATE Uniformly sample an agent $j \in [M]$ to ensure independence of the observed trajectories, and observe its state-action trajectory $(x_{t,n}^j,a_{t,n}^j)_{n \in [N]}$
            \STATE Update the counts using the entire trajectory
            \begin{align*}
        &N_{t+1,n-1}(x_{t,n-1}^j, a_{t,n-1}^j) \gets N_{t,n-1}(x_{t,n-1}^j, a_{t,n-1}^j) + 1 \\
        &M_{t+1,n-1}(x_{t,n}^j| x_{t,n-1}^j, a_{t,n-1}^j) \gets M_{t,n-1}(x_{t,n}^j| x_{t,n-1}^j, a_{t,n-1}^j) + 1
    \end{align*}
    \STATE Update the estimate $\hat{p}_{t+1}$ based on the updated counts following Eq.~\eqref{eq:proba_est}
    \STATE Observe the objective function $F_t$ (full-information)
    \STATE Recall that for each agent \textcolor{blue}{$j \in [M-1]$}, $(x_{t+1,0}^j, a_{t+1,0}^j) = (x_{t,N}^j, a_{t,N}^j)$
    \STATE \textcolor{blue}{Update the second set of counts using the trajectory of agent $M$}
     \begin{align*}
        &\widetilde{N}_{t+1,n-1}(x_{t,n-1}^M, a_{t,n-1}^M) \gets \widetilde{N}_{t,n-1}(x_{t,n-1}^M, a_{t,n-1}^M) + 1 \\
        &\widetilde{M}_{t+1,n-1}(x_{t,n}^M| x_{t,n-1}^M, a_{t,n-1}^M) \gets \widetilde{M}_{t,n-1}(x_{t,n}^M| x_{t,n-1}^M, a_{t,n-1}^M) + 1
    \end{align*}
    \STATE \textcolor{blue}{Update the estimate $\tilde{p}_{t+1}$ based on the second set of updated counts following Eq.~\eqref{eq:proba_est}}
    \STATE \textcolor{blue}{Compute the initial state-action distribution estimate $\tilde{\rho}_{t+1} = \tilde{\rho}_t \widetilde{P}_{\pi_t}$}
    \STATE \textcolor{blue}{Reset agent $M$ at $(x_{t+1,0}^M, a_{t+1,0}^M) \sim \tilde{\rho}_{t+1}(\cdot)$}
    \STATE Compute $\pi_{t+1}$ solving Eq.~\eqref{iteration_md_solver} \textcolor{blue}{but with $\tilde{\rho}_{t+1}$ instead of $\rho_{t+1}$}, $\hat{p}_{t+1}$ and $F_t$
    \ENDFOR
    \end{algorithmic}
\end{algorithm}

\newpage

\section{Auxiliary Lemmas}\label{app:auxiliary}

\begin{lemma}[Lem. 17 of \citealp{UCRL-2}]\label{lemma:proba_difference}
    For any $0 < \delta < 1$,  with a probability of at least $1-\delta$,
        \[ 
        {\|p_n(\cdot|x,a) - \hat{p}_n^t(\cdot|x,a)\|_1 \leq 
        \sqrt{\frac{2 |\mathcal{X}|\log\left(\frac{|\mathcal{X}| |\mathcal{A}| N T}{\delta}\right)}{\max{\{1,N_{n-1}^t(x,a)}\}} } }
        \]
    holds simultaneously for all $(t,n,x,a) \in [T] \times [N] \times \mathcal{X} \times \mathcal{A}$.
\end{lemma}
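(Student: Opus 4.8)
The statement is the classical $L_1$ concentration bound for empirical transition kernels, and the plan is to reduce it to a per-coordinate deviation inequality together with two union bounds. First I would fix a triple $(n,x,a)$ and isolate the subsequence of next-state observations that the learner collects whenever a recorded agent sits at state $x$ and plays action $a$ at step $n$. The crucial structural point — exactly what the multi-agent protocol of Alg.~\ref{alg:online_protocol} buys us — is that this subsequence is an i.i.d.\ sequence $Y_1,Y_2,\dots$ with common law $p_n(\cdot\mid x,a)$: since in each episode the learner records the trajectory of a freshly and independently sampled agent, the next-state draw at a given $(n,x,a)$ is independent of the past records (this is the same independence used in the proof of Prop.~\ref{prop:mdp_martingale}). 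Hence, on the event $\{N^t_{n-1}(x,a)=m\}$ with $m\ge 1$, the estimator $\hat p^t_n(\cdot\mid x,a)=\tfrac1m\sum_{i=1}^m \mathds{1}_{\{Y_i=\cdot\}}$ is the empirical measure of $m$ i.i.d.\ samples over the alphabet $\mathcal X$, of size $|\mathcal X|$.

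Next I would apply the standard $L_1$ deviation inequality for empirical measures: for $m$ i.i.d.\ samples from a distribution on a $d$-point set, $\mathbb P\big(\|\hat q_m - q\|_1 \ge \varepsilon\big)\le 2^{d} e^{-m\varepsilon^2/2}$. This can be derived self-contained by writing $\|\hat q_m-q\|_1 = 2\max_{S\subseteq \mathcal X}\big(q(S)-\hat q_m(S)\big)$, applying Hoeffding's inequality to each fixed $S$, and union-bounding over the $2^{d}$ subsets $S$. Setting the right-hand side equal to $\delta' := \delta/(|\mathcal X||\mathcal A| N T)$ and inverting yields $\varepsilon_m := \sqrt{2\big(d\log 2 + \log(1/\delta')\big)/m}$; using $d\log 2 \le d\log(1/\delta')$ together with $d=|\mathcal X|\ge 1$ and absorbing the resulting constant into the logarithm bounds this by $\sqrt{2|\mathcal X|\log(|\mathcal X||\mathcal A|NT/\delta)/m}$, which is precisely the claimed deviation and matches the bonus scale $C_\delta/\sqrt m$ of Eq.~\eqref{eq:bonus}.

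The final step is the union bound. Since $N^t_{n-1}(x,a)\le t-1 < T$ always, it suffices, for each $(n,x,a)$, to control the deviation of the empirical measure built from the first $m$ samples simultaneously over $m\in\{1,\dots,T\}$; note that on $\{N^t_{n-1}(x,a)=m\}$ the estimator $\hat p^t_n(\cdot\mid x,a)$ equals this $m$-sample empirical measure regardless of $t$, so no separate union over the episode index $t$ is needed. Taking a union over $(n,x,a,m)\in[N]\times\mathcal X\times\mathcal A\times[T]$ — a total of $|\mathcal X||\mathcal A|NT$ events, each of probability at most $\delta'$ — gives the claim with probability at least $1-\delta$. The residual case $N^t_{n-1}(x,a)=0$ is trivial: the $L_1$ distance between two probability vectors (or between $p_n(\cdot\mid x,a)$ and the uniform initialization of Eq.~\eqref{eq:proba_est}) is at most $2$, while the right-hand side is at least $\sqrt{2\log(|\mathcal X||\mathcal A|NT/\delta)}$, which exceeds $2$ in the horizon regime of interest.

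The main obstacle is not any individual inequality — each is routine — but making the reduction to i.i.d.\ samples rigorous in the reset-free, multi-agent setting: one must argue that conditioning on the random visit count $N^t_{n-1}(x,a)$ does not bias the observed next states. A clean way to do this, and the one I would adopt, is to fix the i.i.d.\ source sequence $(Y_i)_{i\ge 1}$ for each $(n,x,a)$ at the outset and express all counts and estimators as measurable functions of these sequences, so that the deviation events $\{\|\tfrac1m\sum_{i\le m}\mathds{1}_{\{Y_i=\cdot\}} - p_n(\cdot\mid x,a)\|_1\ge\varepsilon_m\}$ are defined before any randomness about which value of the count actually occurs; this is exactly the place where independence across episodes, hence the assumption $M>1$, enters.
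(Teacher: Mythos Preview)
Your proposal is correct and follows the standard derivation that underlies the cited result: apply the Weissman-type $L_1$ deviation bound $\mathbb{P}(\|\hat q_m-q\|_1\ge\varepsilon)\le 2^{|\mathcal X|}e^{-m\varepsilon^2/2}$ to the empirical next-state distribution at each fixed $(n,x,a)$ and count $m$, then union-bound over $(n,x,a,m)\in[N]\times\mathcal X\times\mathcal A\times[T]$. The paper itself does not prove this lemma at all --- it is stated in Appendix~\ref{app:auxiliary} purely as a citation of Lem.~17 of \citet{UCRL-2} and used as a black box --- so there is no ``paper's proof'' to compare against beyond noting that your argument is essentially the one in the cited reference.

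One small remark: your emphasis on the multi-agent protocol and independence across episodes is not actually needed for \emph{this} lemma. The Markov property alone guarantees that, conditionally on $(x_{s,n},a_{s,n})=(x,a)$, the next state $x_{s,n+1}$ is a fresh draw from $p_{n+1}(\cdot\mid x,a)$ independent of the entire past, so the subsequence of next-state observations at $(n,x,a)$ is i.i.d.\ regardless of how trajectories are correlated across episodes. The independence-across-episodes assumption $M>1$ is crucial elsewhere (Prop.~\ref{prop:mdp_martingale}, where one needs $\mathbb{E}[\mathds{1}_{\{x_{t,n}=x,a_{t,n}=a\}}\mid\mathcal F_t]=\mu_n^{\pi_t,\rho_t}(x,a)$), but not here.
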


\begin{lemma}\label{lemma:bound_norm_mu_diff_mu0}
        For any strategy $\pi \in (\Delta_\mathcal{A})^{\mathcal{X} \times N}$, for any two probability kernels $p = (p_n)_{n \in [N]}$ and $q = (q_n)_{n \in [N]}$ such that $p_n, q_n: \mathcal{X} \times \mathcal{A} \times \mathcal{X} \rightarrow [0,1]$, for any two initial state-action distributions $\nu, \nu' \in \Delta_{\mathcal{X} \times \mathcal{A}}$, and for all $n \in [N]$,
    \begin{equation*}
    \begin{split}
        &\|\mu_n^{\pi,p, \nu} - \mu_n^{\pi, q, \nu'}\|_1 \leq \\
        &\quad \quad \sum_{i=0}^{n-1} \sum_{x,a} \mu_i^{\pi, p, \nu}(x,a) \| p_{i+1}(\cdot| x,a) - q_{i+1} (\cdot| x,a) \|_1 + \|\nu - \nu'\|_1,
    \end{split}
\end{equation*}
where $\mu^{\pi,p,\nu}$ is the state-action distribution sequence induced by policy $\pi$, in the MDP with probability transition $p$, with initial state-action distribution $\nu$.
\end{lemma}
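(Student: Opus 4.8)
\textbf{Proof proposal for Lemma~\ref{lemma:bound_norm_mu_diff_mu0}.}

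The plan is to prove the bound by induction on $n$, tracking how the discrepancy between the two state-action distribution sequences accumulates one time step at a time. The base case $n = 0$ is immediate: $\mu_0^{\pi,p,\nu} = \nu$ and $\mu_0^{\pi,q,\nu'} = \nu'$, so $\|\mu_0^{\pi,p,\nu} - \mu_0^{\pi,q,\nu'}\|_1 = \|\nu - \nu'\|_1$, which matches the claimed bound with an empty sum. For the inductive step, I would write out the forward recursion (Eq.~\eqref{mu_induced_pi}) for both sequences at step $n+1$ and subtract:
\begin{equation*}
\mu_{n+1}^{\pi,p,\nu}(x,a) - \mu_{n+1}^{\pi,q,\nu'}(x,a) = \sum_{x',a'} \Big[ \mu_n^{\pi,p,\nu}(x',a') p_{n+1}(x|x',a') - \mu_n^{\pi,q,\nu'}(x',a') q_{n+1}(x|x',a') \Big] \pi_{n+1}(a|x).
\end{equation*}

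The key manipulation is the standard ``add and subtract a cross term'' trick: insert $\pm \mu_n^{\pi,p,\nu}(x',a') q_{n+1}(x|x',a')$ inside the bracket so it splits into $\mu_n^{\pi,p,\nu}(x',a')\big[p_{n+1}(x|x',a') - q_{n+1}(x|x',a')\big]$ plus $\big[\mu_n^{\pi,p,\nu}(x',a') - \mu_n^{\pi,q,\nu'}(x',a')\big] q_{n+1}(x|x',a')$. Then I would take the $L_1$ norm over $(x,a)$, apply the triangle inequality, and use two facts: first, $\sum_a \pi_{n+1}(a|x) = 1$, so summing over $a$ is harmless; second, $q_{n+1}(\cdot|x',a')$ is a probability distribution over $x$, so $\sum_x q_{n+1}(x|x',a') = 1$. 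The first term then collapses to $\sum_{x',a'} \mu_n^{\pi,p,\nu}(x',a') \|p_{n+1}(\cdot|x',a') - q_{n+1}(\cdot|x',a')\|_1$, which is exactly the new ($i = n$) term appearing in the sum, and the second term collapses to $\|\mu_n^{\pi,p,\nu} - \mu_n^{\pi,q,\nu'}\|_1$. Applying the induction hypothesis to this last quantity yields the claimed bound for $n+1$.

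The argument is essentially a routine telescoping/recursion estimate, so I do not anticipate a genuine obstacle; the only point requiring a little care is bookkeeping the index ranges (ensuring the freshly generated term indexed by $i = n$ slots correctly into the sum $\sum_{i=0}^{n}$, and that the $\|\nu - \nu'\|_1$ term is carried through unchanged). I would also make explicit that all the sums are finite and nonnegative so that Fubini-type interchange of summation order and the splitting of the $L_1$ norm are justified without comment. Note the asymmetry of the bound — the occupancy weights are those of the $p$-chain, $\mu_i^{\pi,p,\nu}$ — which is a direct consequence of which cross term we chose to insert; inserting the other cross term would give the symmetric statement with $\mu_i^{\pi,q,\nu'}$ weights instead, but the stated form is the one used downstream.
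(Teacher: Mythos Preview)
Your proposal is correct and follows exactly the same route as the paper: establish the one-step recursion $\|\mu_n^{\pi,p,\nu}-\mu_n^{\pi,q,\nu'}\|_1 \le \sum_{x,a}\mu_{n-1}^{\pi,p,\nu}(x,a)\|p_n(\cdot|x,a)-q_n(\cdot|x,a)\|_1 + \|\mu_{n-1}^{\pi,p,\nu}-\mu_{n-1}^{\pi,q,\nu'}\|_1$ via the add-and-subtract trick, then conclude by induction. The paper merely cites this one-step bound (referring to Lemma~D.1 of \cite{pmlr-v238-moreno24a}) without spelling out the cross-term manipulation, so your write-up is in fact more explicit than the original.
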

\begin{proof}
    Using that for $n \geq 1$, 
    \[
    \mu_n^{\pi, p, \nu}(x,a) = \sum_{x',a'} \mu_{n-1}^{\pi, p, \nu'}(x',a')p_n(x|x',a') \pi_n(a|x),
    \]
    we can show that (see for example Lemma D.1 from \cite{pmlr-v238-moreno24a})
    \[
    \|\mu_n^{\pi,p, \nu} - \mu_n^{\pi, q, \nu'}\|_1 \leq \sum_{x,a} \mu_{n-1}^{\pi, p, \nu}(x,a) \| p_{n}(\cdot| x,a) - q_{n} (\cdot| x,a) \|_1 + \|\mu_{n-1}^{\pi, p, \nu'} - \mu_{n-1}^{\pi, q, \nu} \|_1.
    \]
    Hence, by induction and using that $\mu_0^{\pi, p, \nu'} = \nu'$ and $\mu_0^{\pi, q, \nu} = \nu$, we conclude the proof.
\end{proof}

\begin{lemma}\label{lemma:mu_bonus}
    For any policy $\pi \in \Pi$, for any initial state-action distribution $\nu \in \Delta_{\mathcal{X} \times \mathcal{A}}$, for any $\delta \in (0,1)$, for any episode $t \in [T]$, we have that with probability at least $1-\delta$,
    \[
    \|\nu (P_{\pi} - \widehat{P}_{\pi}^t) \|_1 \leq \langle \nu  \widehat{P}_{\pi}^t, b_t \rangle = \langle \hat{\mu}_{t,N}^{\pi, \nu}, b_t \rangle,
    \]
    where $b_t$ is the bonus vector defined for each $n \in \{0,1,\ldots,N-1\}$, $(x,a) \in \mathcal{X} \times \mathcal{A}$, as
    \[
    b_{t,n}(x,a) := \frac{C_\delta}{\sqrt{\max\{1, N_{t,n}(x,a) \}}},
    \]
    with 
    \begin{equation}\label{eq:c_delta}
        C_\delta = \sqrt{2 |\mathcal{X}| \log\bigg(\frac{|\mathcal{X}| |\mathcal{A}| N T }{\delta} \bigg)}.
    \end{equation}
\end{lemma}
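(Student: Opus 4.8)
The plan is to reduce the statement to a step-by-step comparison of the two occupancy trajectories generated from $\nu$ — one driven by the true kernel $p$, the other by the estimate $\hat p_t$ — and then to insert the concentration bound of Lemma~\ref{lemma:proba_difference}. First I would record the identities $\nu P_\pi = \mu_N^{\pi,p,\nu}$ and $\nu\widehat{P}_\pi^t = \hat{\mu}_{t,N}^{\pi,\nu} = \mu_N^{\pi,\hat p_t,\nu}$, which follow directly from the definition of $P_\pi$ in \eqref{eq:markov_chain_proba} together with the forward recursions \eqref{mu_induced_pi}: unrolling the matrix product $\nu P_\pi$ one step at a time is exactly the forward equation, and likewise for $\nu\widehat{P}_\pi^t$ with $\hat p_t$ in place of $p$.

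Next I would apply Lemma~\ref{lemma:bound_norm_mu_diff_mu0} with the first kernel taken to be $\hat p_t$, the second kernel $p$, and both initial distributions equal to $\nu$, so that the $\|\nu-\nu'\|_1$ term drops. The one point worth flagging is the choice of which kernel plays the role of ``$p$'' in that lemma: picking $\hat p_t$ forces the occupancy weights on the right-hand side to be the \emph{estimated} occupancies $\hat{\mu}_{t,i}^{\pi,\nu}$ rather than the (inaccessible) true ones, which is what the statement requires. Evaluated at $n = N$ this gives
\[
\|\nu\widehat{P}_\pi^t - \nu P_\pi\|_1 \;\le\; \sum_{i=0}^{N-1}\sum_{x,a}\hat{\mu}_{t,i}^{\pi,\nu}(x,a)\,\big\|\hat p_{t,i+1}(\cdot|x,a) - p_{i+1}(\cdot|x,a)\big\|_1 .
\]

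Finally I would invoke Lemma~\ref{lemma:proba_difference}: on the event of probability at least $1-\delta$ on which it holds (simultaneously over all $(t,n,x,a)$), we have $\|\hat p_{t,i+1}(\cdot|x,a) - p_{i+1}(\cdot|x,a)\|_1 \le C_\delta/\sqrt{\max\{1,N_{t,i}(x,a)\}} = b_{t,i}(x,a)$, with $C_\delta$ as in \eqref{eq:c_delta}; note the index shift $n-1 \mapsto i$, which lines up the count $N_{t,i}$ with the $(i+1)$-th transition. Substituting yields $\|\nu\widehat{P}_\pi^t - \nu P_\pi\|_1 \le \sum_{i=0}^{N-1}\langle\hat{\mu}_{t,i}^{\pi,\nu}, b_{t,i}\rangle$, which is precisely $\langle\hat{\mu}_t^{\pi,\nu}, b_t\rangle = \langle\nu\widehat{P}_\pi^t, b_t\rangle$ once $\langle\cdot,\cdot\rangle$ is read as the sum of per-step inner products along the trajectory. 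I do not anticipate a genuine obstacle here: the proof is essentially bookkeeping, and the only things to get right are the kernel assignment in Lemma~\ref{lemma:bound_norm_mu_diff_mu0} (so that $\hat\mu$, not $\mu$, appears in the bound) and the off-by-one index linking the bonus $b_{t,i}$ to the visit count $N_{t,i}$.
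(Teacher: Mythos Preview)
Your proposal is correct and follows essentially the same approach as the paper's own proof: rewrite $\|\nu(P_\pi - \widehat P_\pi^t)\|_1$ as $\|\mu_N^{\pi,\nu} - \hat\mu_{t,N}^{\pi,\nu}\|_1$, apply Lemma~\ref{lemma:bound_norm_mu_diff_mu0} with the estimated kernel in the first slot so that $\hat\mu_{t,i}^{\pi,\nu}$ appears as the weight, and then plug in the concentration bound of Lemma~\ref{lemma:proba_difference}. Your attention to the kernel assignment and the index alignment between $b_{t,i}$ and $N_{t,i}$ matches exactly the care needed.
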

\begin{proof}
    Rewriting the initial $L_1$ norm in terms of the state-action distributions we have that
    \begin{equation*}
        \begin{split}
            \|\nu (P_{\pi} - \widehat{P}_{\pi}^t) \|_1  &= \|\mu_N^{\pi, \nu} - \hat{\mu}_{t,N}^{\pi, \nu} \|_1 \\
            &\underbrace{\leq}_{\text{Lemma~\ref{lemma:bound_norm_mu_diff_mu0}}} \sum_{i=0}^{N-1} \sum_{x,a} \hat{\mu}^{ \pi, \nu}_{t,i}(x,a) \|p_{i+1}(\cdot|x,a) - \hat{p}^t_{i+1}(\cdot|x,a) \|_1 \\
            &\underbrace{\leq}_{\text{Lemma~\ref{lemma:proba_difference}}} \sum_{i=0}^{N-1} \sum_{x,a} \hat{\mu}_{t,i}^{\pi,\nu}(x,a) \frac{C_\delta}{\sqrt{\max\{1, N_{t,i}(x,a) \}}} \\
            & = \langle \hat{\mu}^{\pi,\nu}_t, b_t \rangle.
        \end{split}
    \end{equation*}
\end{proof}

\begin{proposition}\label{prop:mdp_martingale}
   For \( t \in [T] \), let \( (x_{t,n}, a_{t,n})_{n \in [N]} \) represent the trajectory of an agent following policy \( \pi_t \), with the initial state-action pair sampled independently from previous trajectories according to a distribution \( \nu_t \in \Delta_{\mathcal{X} \times \mathcal{A}} \). Define the filtration \( \mathcal{F}_t := \sigma\big( (x_{s,n}, a_{s,n})_{n \in [N]}, s < t \big) \), capturing the sigma-algebra generated by all observations prior to the start of episode \( t \). Assume that both \( \pi_t \) and \( \nu_t \) are \( \mathcal{F}_t \)-measurable. Let \( N_{t,n}(x, a) := \sum_{s=1}^{t-1} \mathds{1}_{\{x_{s,n} = x, a_{s,n} = a\}} \) denote the number of times the state-action pair \( (x, a) \) has been visited at time step \( n \) up to the start of episode \( t \), and define \( \xi_{t,n}(x, a) \leq \frac{C}{\sqrt{\max\{1,N_{t,n}(x, a)}\}} \) for all \( (t, n, x, a) \), where \( C \) is a fixed constant, and $\xi_{t,n}(x,a)$ is $\mathcal{F}_t$-measurable. We also assume $\xi_{t,n}(x,a) \leq c$, for a fixed constant $c$ that may differ from $C$. Hence, for all $n \in [N]$, for all $\delta \in (0,1)$, with probability at least $1-\delta$,
    \[
    \sum_{t=1}^T \langle \mu^{\pi_t, \nu_t}_n, \xi_{t,n} \rangle := \sum_{t=1}^T \sum_{x,a} \mu^{\pi_t,\nu_t}_n(x,a) \xi_{t,n}(x,a) \leq   3 C \sqrt{|\mathcal{X}| |\mathcal{A}| T} +  c |\mathcal{X}| \sqrt{2 T \log\bigg(\frac{N}{\delta} \bigg)}.
    \]
\end{proposition}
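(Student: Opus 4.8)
The plan is to split the quantity of interest into a martingale part and a deterministic ``pigeonhole'' part, bound each separately, and add them. Fix $n \in [N]$. The cornerstone is the identity
\[
\mathbb{E}\big[\xi_{t,n}(x_{t,n},a_{t,n}) \mid \mathcal{F}_t\big] \;=\; \sum_{x,a}\mu_n^{\pi_t,\nu_t}(x,a)\,\xi_{t,n}(x,a) \;=\; \langle \mu_n^{\pi_t,\nu_t},\xi_{t,n}\rangle .
\]
Indeed, $\pi_t$ and $\nu_t$ are $\mathcal{F}_t$-measurable and the episode-$t$ trajectory is generated from the initial law $\nu_t$ \emph{independently of $\mathcal{F}_t$}, so a short induction over the within-episode steps (using $x_{t,m}\sim p_m(\cdot|x_{t,m-1},a_{t,m-1})$ and $a_{t,m}\sim\pi_{t,m}(\cdot|x_{t,m})$) shows that the conditional law of $(x_{t,n},a_{t,n})$ given $\mathcal{F}_t$ is exactly $\mu_n^{\pi_t,\nu_t}$; since $\xi_{t,n}$ is $\mathcal{F}_t$-measurable, the identity follows. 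Hence $Z_t := \langle \mu_n^{\pi_t,\nu_t},\xi_{t,n}\rangle - \xi_{t,n}(x_{t,n},a_{t,n})$ is a martingale difference sequence for the filtration $(\mathcal{F}_t)_t$, with increments bounded since both terms lie in $[0,c]$.

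First I would apply a Hoeffding--Azuma inequality to $\sum_{t=1}^T Z_t$. To surface the $|\mathcal{X}|$ factor and the $\log(N/\delta)$ dependence appearing in the statement, it is convenient to decompose $\langle\mu_n^{\pi_t,\nu_t},\xi_{t,n}\rangle=\sum_{x\in\mathcal{X}}\sum_a\mu_n^{\pi_t,\nu_t}(x,a)\xi_{t,n}(x,a)$ and $\xi_{t,n}(x_{t,n},a_{t,n})=\sum_{x\in\mathcal{X}}\mathds{1}_{\{x_{t,n}=x\}}\xi_{t,n}(x,a_{t,n})$; for each fixed $x$ the corresponding increment is a martingale difference bounded by $c$, so Azuma together with a union bound over $x$ and over $n$ gives, with probability at least $1-\delta$,
\[
\sum_{t=1}^T\langle\mu_n^{\pi_t,\nu_t},\xi_{t,n}\rangle \;\le\; \sum_{t=1}^T\xi_{t,n}(x_{t,n},a_{t,n}) \;+\; c\,|\mathcal{X}|\sqrt{2T\log(N/\delta)} ,
\]
up to the precise form of the constants and of the logarithmic argument, which is routine bookkeeping.

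Next I would bound the realized sum deterministically. Using $\xi_{t,n}(x_{t,n},a_{t,n})\le C/\sqrt{\max\{1,N_{t,n}(x_{t,n},a_{t,n})\}}$ and reorganizing the sum over episodes according to which state-action pair is visited at step $n$: if $(x,a)$ is visited $K_{x,a}:=N_{T+1,n}(x,a)$ times, its successive visits carry the weights $1/\sqrt{\max\{1,0\}},1/\sqrt{\max\{1,1\}},\dots,1/\sqrt{K_{x,a}-1}$, whose sum is at most $1+2\sqrt{K_{x,a}}\le 3\sqrt{K_{x,a}}$. Since every episode contributes exactly one visit at step $n$, $\sum_{x,a}K_{x,a}=T$, so Cauchy--Schwarz gives $\sum_{t=1}^T\xi_{t,n}(x_{t,n},a_{t,n})\le 3C\sum_{x,a}\sqrt{K_{x,a}}\le 3C\sqrt{|\mathcal{X}||\mathcal{A}|T}$. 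Adding this to the concentration bound yields the claimed inequality.

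The main obstacle is the very first step: justifying that, conditionally on $\mathcal{F}_t$, the episode-$t$ trajectory has law $\mu_n^{\pi_t,\nu_t}$ with $\nu_t$ the \emph{true} initial distribution (here $\rho_t$). This is exactly where the $M>1$ assumption enters — the observed trajectory at episode $t$ belongs to a freshly (uniformly) sampled agent whose starting state is independent of the past, so its conditional initial law is genuinely $\rho_t$; with a single reset-free agent the ``initial'' state of episode $t$ would be the $\mathcal{F}_t$-measurable endpoint of episode $t-1$, a Dirac mass rather than $\rho_t$, and $Z_t$ would fail to be a martingale difference sequence. Everything afterwards — the Azuma step, the pigeonhole estimate, and matching the exact constants and logarithmic term in the statement — is standard.
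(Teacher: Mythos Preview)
Your proposal is correct and follows essentially the same route as the paper: both split $\sum_t\langle\mu_n^{\pi_t,\nu_t},\xi_{t,n}\rangle$ into the realized sum $\sum_t\xi_{t,n}(x_{t,n},a_{t,n})$ plus a martingale difference, bound the former via the standard $\sum_{k<K}1/\sqrt{\max\{1,k\}}\le 3\sqrt{K}$ estimate together with Cauchy--Schwarz/Jensen over $(x,a)$, and bound the latter by Azuma--Hoeffding with a union bound over $n$. Your discussion of why the $M>1$ assumption is needed to make $(x_{t,n},a_{t,n})\mid\mathcal{F}_t\sim\mu_n^{\pi_t,\nu_t}$ is also exactly the point the paper makes; the only cosmetic difference is that the paper applies Azuma directly to the aggregate martingale (still writing the loose $c|\mathcal{X}|$ increment bound you were trying to reproduce) rather than decomposing per state.
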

\begin{proof}
    For a fixed $(x,a) \in \mathcal{X} \times \mathcal{A}$, we have from Lemma 19 of \cite{UCRL-2} that, 
    \[
    \sum_{t=1}^T \mathds{1}_{\{x_{t,n}=x,a_{t,n}=a\}} \xi_{t,n}(x,a) \leq C \sum_{t=1}^T  \frac{\mathds{1}_{\{x_{t,n}=x,a_{t,n}=a\}}}{\sqrt{\max\{N_{t,n}(x,a),1\}}} \leq 3 C \sqrt{N_{T,n}(x,a)}.
    \]

    Summing the inequality above over all possible $(x,a)$, using Jensen's inequality and that $\sum_{x,a} N_{T,n}(x,a) = T$, we obtain that
    \begin{equation}\label{eq:mdp_term_1}
        \sum_{t=1}^T \sum_{x,a} \mathds{1}_{\{x_{t,n}=x,a_{t,n}=a\}} \xi_{t,n}(x,a) \leq 3 C \sqrt{|\mathcal{X}| |\mathcal{A}| T}.
    \end{equation}

    Therefore, to conclude the proof we need to analyze the term
    \[
    \sum_{t=1}^T \sum_{x,a} \big( \mu_n^{\pi_t, \nu_t}(x,a) -  \mathds{1}_{\{x_{t,n}=x,a_{t,n}=a\}} \big) \xi_{t,n}(x,a).
    \]
   Note that, for all \( (x, a) \), \( \xi_{t,n}(x, a) \) is \( \mathcal{F}_t \)-measurable by construction, and, by assumption, so are \( \pi_t \) and \( \nu_t \). Consequently, we have \( \mathbb{E}[\mu_n^{\pi_t, \nu_t}(x, a) \xi_{t,n}(x, a) \mid \mathcal{F}_t] = \mu_n^{\pi_t, \nu_t}(x, a) \xi_{t,n}(x, a) \). Furthermore, since the trajectories in each episode are observed independently from agents initialized according to the distribution \( \nu_t \), it follows that  
\[
\mathbb{E}[\mathds{1}_{\{x_{t,n} = x, a_{t,n} = a\}} \xi_{t,n}(x, a) \mid \mathcal{F}_t] = \xi_{t,n}(x, a) \mathbb{E}[\mathds{1}_{\{x_{t,n} = x, a_{t,n} = a\}} \mid \mathcal{F}_t] = \xi_{t,n}(x, a) \mu_n^{\pi_t, \nu_t}(x, a).
\]

Define the sequence $M_{0,n} = 0$, and for all $t \in [T]$
\[
M_{t,n} := \sum_{s=1}^t \sum_{x,a} \big( \mu_n^{\pi_s, \nu_s}(x,a) -  \mathds{1}_{\{x_{s,n}=x,a_{s,n} = a\}} \big) \xi_{s,n}(x,a).
\]
From the arguments above, $(M_{t,n})_{t \in [T]}$ defines a martingale difference sequence. Recall that $|\xi_{t,n}(x,a)| \leq c$. Hence, from Azuma-Hoeffding inequality we obtain that, for any $\delta \in (0,1)$, with probability at least $1-\delta$,
\begin{equation}\label{eq:mdp_term_2}
    M_{T,n} \leq c |\mathcal{X}| \sqrt{2 T \log\bigg(\frac{N}{\delta} \bigg)}.
\end{equation}

Joining the bounds on Eq.~\eqref{eq:mdp_term_1} and~\eqref{eq:mdp_term_2}, we obtain that, with probability at least $1-\delta$,
\[
 \sum_{t=1}^T \langle \mu^{\pi_t, \nu_t}_n, \xi_{t,n} \rangle \leq  3 C \sqrt{|\mathcal{X}| |\mathcal{A}| T} +  c |\mathcal{X}| \sqrt{2 T \log\bigg(\frac{N}{\delta} \bigg)},
\]
concluding the proof.
\end{proof}

\begin{corollary}\label{cor:bonus_analysis}
    Let $(b_t)_{t \in [T]}$ and $(\bar{b_t})_{t \in [T]}$ be the two sequences of bonus vectors defined in Eq.~\eqref{eq:bonus}. Hence, for any $\delta \in (0,1)$, with probability at least $1-\delta$, 
    \begin{equation*}
    \textstyle{
            \sum_{t=1}^T \langle b_t, \mu^{\pi_t, \rho_t}  \rangle = \tilde{O}(N |\mathcal{X}|^{3/2} \sqrt{|\mathcal{A}|T}), \quad \text{ and } \quad  \sum_{t=1}^T \langle \bar{b}_t, \mu^{\pi_t, \rho_t}  \rangle = \tilde{O}(\ell N^2 |\mathcal{X}|^{3/2} \sqrt{|\mathcal{A}|T})
            }
    \end{equation*}
    and with probability at least $1-2\delta$, 
        \begin{equation*}
    \textstyle{
 \sum_{t=1}^T \langle b_t, \hat{\mu}^{\pi_t, \rho_t}_t  \rangle = \tilde{O} \big(N^2 |\mathcal{X}|^{3/2} \sqrt{|\mathcal{A}| T}\big) , \quad \text{ and } \quad \sum_{t=1}^T \langle \bar{b}_t, \hat{\mu}^{\pi_t, \rho_t}_t  \rangle = \tilde{O} \big(\ell N^3 |\mathcal{X}|^{3/2} \sqrt{|\mathcal{A}| T}\big).
            }
    \end{equation*}
\end{corollary}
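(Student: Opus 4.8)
The plan is to reduce each of the four bounds in Corollary~\ref{cor:bonus_analysis} to an application of Proposition~\ref{prop:mdp_martingale} with the appropriate choice of the quantity $\xi_{t,n}$ and the constants $C$ and $c$. First I would observe that for the bonus vectors $b_t$ we have $b_{t,n}(x,a) = C_\delta / \sqrt{\max\{1,N_{t,n}(x,a)\}}$ and for $\bar b_t$ we have $\bar b_{t,n}(x,a) = \ell(N-n) C_\delta / \sqrt{\max\{1,N_{t,n}(x,a)\}}$, with $C_\delta = \sqrt{2|\mathcal{X}|\log(|\mathcal{X}||\mathcal{A}|NT/\delta)} = \tilde O(\sqrt{|\mathcal{X}|})$. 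Both are of the form required by Proposition~\ref{prop:mdp_martingale}, with the first having constant $C = C_\delta$ (and uniform bound $c = C_\delta$ since $N_{t,n}\geq 0$) and the second having $C = \ell N C_\delta$ and $c = \ell N C_\delta$; importantly, $b_{t,n}$, $\bar b_{t,n}$, $\pi_t$ and $\rho_t$ are all $\mathcal{F}_t$-measurable since they are computed at the end of episode $t-1$. Applying Proposition~\ref{prop:mdp_martingale} at time step $n$ and summing over $n \in [N]$ (paying an extra factor $N$, and replacing $\delta$ by $\delta/N$ in the union bound, which only affects logarithmic terms absorbed in $\tilde O$) gives $\sum_{t=1}^T \langle b_t, \mu^{\pi_t,\rho_t}\rangle = \tilde O(N(C_\delta \sqrt{|\mathcal{X}||\mathcal{A}|T} + C_\delta |\mathcal{X}|\sqrt{T})) = \tilde O(N|\mathcal{X}|^{3/2}\sqrt{|\mathcal{A}|T})$, and similarly $\sum_{t=1}^T \langle \bar b_t, \mu^{\pi_t,\rho_t}\rangle = \tilde O(\ell N^2 |\mathcal{X}|^{3/2}\sqrt{|\mathcal{A}|T})$, which are exactly the first two claims. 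Here I use that $\sqrt{|\mathcal{X}||\mathcal{A}|T} \le |\mathcal{X}|\sqrt{|\mathcal{A}|T}$ and that the dominant term in $\sqrt{|\mathcal{X}|}\cdot\max(\sqrt{|\mathcal{X}||\mathcal{A}|T}, |\mathcal{X}|\sqrt T)$ is $|\mathcal{X}|^{3/2}\sqrt{|\mathcal{A}|T}$ when $|\mathcal{A}|\ge 1$.

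For the last two bounds, involving $\hat\mu^{\pi_t,\rho_t}_t$ rather than $\mu^{\pi_t,\rho_t}$, the point is that the empirical occupancy measure in the estimated MDP $\hat p_t$ is not directly governed by Proposition~\ref{prop:mdp_martingale}, which concerns the true occupancy measure. So I would first pass from $\hat\mu^{\pi_t,\rho_t}_t$ to $\mu^{\pi_t,\rho_t}$ by writing $\langle b_t, \hat\mu^{\pi_t,\rho_t}_t\rangle = \langle b_t, \mu^{\pi_t,\rho_t}\rangle + \langle b_t, \hat\mu^{\pi_t,\rho_t}_t - \mu^{\pi_t,\rho_t}\rangle$ and bounding the second term using Lemma~\ref{lemma:bound_norm_mu_diff_mu0} (with the same policy, same initial distribution, $p$ versus $\hat p_t$) together with Lemma~\ref{lemma:proba_difference}. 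Since $\|b_t\|_\infty \le C_\delta$, the correction term is at most $C_\delta \sum_{n} \|\hat\mu^{\pi_t,\rho_t}_{t,n} - \mu^{\pi_t,\rho_t}_n\|_1 \le C_\delta \sum_{n=1}^N \sum_{i=0}^{n-1} \sum_{x,a} \mu^{\pi_t,\rho_t}_i(x,a)\,\|p_{i+1}(\cdot|x,a) - \hat p_{t,i+1}(\cdot|x,a)\|_1 \le C_\delta \sum_{n=1}^N \sum_{i=0}^{n-1}\sum_{x,a}\mu^{\pi_t,\rho_t}_i(x,a)\, b_{t,i}(x,a)/C_\delta$ (upon recognizing the concentration bound of Lemma~\ref{lemma:proba_difference} as precisely $b_{t,i}(x,a)$); summing over $t$ this is again of the form handled by Proposition~\ref{prop:mdp_martingale} but with an additional factor $N$ from the double sum over $n$ and $i$, hence $\tilde O(N^2 |\mathcal{X}|^{3/2}\sqrt{|\mathcal{A}|T})$. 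Adding this to the already-established $\sum_t \langle b_t, \mu^{\pi_t,\rho_t}\rangle = \tilde O(N|\mathcal{X}|^{3/2}\sqrt{|\mathcal{A}|T})$ gives $\sum_t \langle b_t, \hat\mu^{\pi_t,\rho_t}_t\rangle = \tilde O(N^2|\mathcal{X}|^{3/2}\sqrt{|\mathcal{A}|T})$, which needs the union bound over the two high-probability events (the one from $b_t$ on the true measure and the one from Lemma~\ref{lemma:proba_difference}/Proposition~\ref{prop:mdp_martingale} on the correction term), explaining the $1-2\delta$ probability. The $\bar b_t$ case is identical, with the extra $\ell N$ factor, yielding $\tilde O(\ell N^3 |\mathcal{X}|^{3/2}\sqrt{|\mathcal{A}|T})$.

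The main obstacle I anticipate is purely bookkeeping rather than conceptual: making sure the measurability hypotheses of Proposition~\ref{prop:mdp_martingale} are cleanly satisfied at each invocation (in particular that $\xi_{t,n}$, which in the correction-term application is a weighted sum of bonus-type terms, is $\mathcal{F}_t$-measurable and admits both a $1/\sqrt{\max\{1,N_{t,n}\}}$-type bound and a uniform bound $c$), and that the constants $C$ and $c$ are tracked correctly through the $\ell(N-n)$ weight so that the final $N$-powers come out as $N^2$, $N^3$ and the $\ell$ factor appears exactly once. A second minor point is handling the $\max\{1,N_{t,n}\}$ and the degenerate case where a state-action pair is never visited — but this is already absorbed in the statement of Lemma~\ref{lemma:proba_difference} and Proposition~\ref{prop:mdp_martingale}, so no new argument is required. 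Everything else is a routine combination of the auxiliary lemmas already proved in Appendix~\ref{app:auxiliary}.
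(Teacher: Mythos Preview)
Your proposal is correct and follows essentially the same route as the paper's proof: directly apply Proposition~\ref{prop:mdp_martingale} with $\xi_{t,n}=b_{t,n}$ (resp.\ $\bar b_{t,n}$) for the true-occupancy bounds, then for the $\hat\mu$-bounds decompose as $\langle b_t,\hat\mu_t^{\pi_t,\rho_t}\rangle=\langle b_t,\mu^{\pi_t,\rho_t}\rangle+\langle b_t,\hat\mu_t^{\pi_t,\rho_t}-\mu^{\pi_t,\rho_t}\rangle$, control the difference via Lemma~\ref{lemma:bound_norm_mu_diff_mu0} and Lemma~\ref{lemma:proba_difference}, and apply Proposition~\ref{prop:mdp_martingale} once more to the resulting sum. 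The only cosmetic difference is that the paper, in the second application, takes $\xi_{t,n}(x,a)=\|p_{n+1}(\cdot\mid x,a)-\hat p_{t,n+1}(\cdot\mid x,a)\|_1$ with the sharper uniform bound $c=2$ rather than $c=C_\delta$, but this is irrelevant at the $\tilde O$ level.
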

\begin{proof}
    From Prop.~\ref{prop:mdp_martingale} with $\nu_t = \rho_t$, the policy $\pi_t$ solution of Eq.~\eqref{iteration_md_solver}, the trajectories observed in MDPP-K (Alg.~\ref{alg:main1}) that are independent across episodes, and $\xi_t = b_t$, such that $C  = c = C_\delta$ we obtain that, for any $\delta \in (0,1)$, with probability at least $1-\delta$,
\begin{equation*}
    \begin{split}
        \sum_{t=1}^T \langle b_t, \mu^{\pi_t, \rho_t} \rangle &\leq 3 N C_\delta \sqrt{|\mathcal{X}| |\mathcal{A}| T} + N C_\delta |\mathcal{X}| \sqrt{2 T \log\bigg(\frac{N}{\delta} \bigg)} \\
        &\leq N (3 |\mathcal{X}|+ 2|\mathcal{X}|^{3/2}) \sqrt{2 |\mathcal{A}| T \log\bigg(\frac{|\mathcal{X} |\mathcal{A}| N T }{\delta} \bigg) \log \bigg(\frac{N}{\delta} \bigg)} .
    \end{split}
\end{equation*}

For the bonus sequence \((\bar{b}_t)_{t \in [T]}\), we apply Prop.~\ref{prop:mdp_martingale} in a similar manner, this time with \(C = c = \ell N C_\delta\), hence we obtain that
\begin{equation*}
    \begin{split}
        \sum_{t=1}^T \langle \bar{b}_t, \mu^{\pi_t, \rho_t} \rangle \leq  \ell N^2 (3 |\mathcal{X}| + 2|\mathcal{X}|^{3/2} )\sqrt{2 |\mathcal{A}| T \log\bigg(\frac{|\mathcal{X} |\mathcal{A}| N T }{\delta} \bigg) \log \bigg(\frac{N}{\delta} \bigg)}.
    \end{split}
\end{equation*}

As for the terms involving the sum of the product between a bonus vector and $\hat{\mu}^{\pi_t, \rho_t}_t$, we have that
\[
\sum_{t=1}^T \langle b_t, \hat{\mu}^{\pi_t, \rho_t}_t  \rangle = \underbrace{ \sum_{t=1}^T \langle b_t, \hat{\mu}^{\pi_t, \rho_t}_t - \mu^{\pi_t, \rho_t}  \rangle}_{(i)} + \underbrace{\sum_{t=1}^T \langle b_t, \mu^{\pi_t, \rho_t}  \rangle}_{(ii)},
\]
where we have just shown a high-probability upper bound for term $(ii)$. 

We now analyze term $(i)$: using Holder's inequality, and that from the definition of the bonus vector, $\|b_{t,n}\|_\infty \leq C_\delta$, we have that
\begin{equation*}
    \begin{split}
        (i) &\leq \sum_{t=1}^T \sum_{n=1}^N \|b_{t,n} \|_\infty \|\hat{\mu}_{t,n}^{\pi_t, \rho_t} - \mu_n^{\pi_t, \rho_t} \|_1 \\
        &\underbrace{\leq}_{\text{Lemma~\ref{lemma:bound_norm_mu_diff_mu0}}} C_\delta  \sum_{t=1}^T \sum_{n=1}^N \sum_{i=0}^{n-1} \sum_{x,a} \mu_i^{\pi_t, \rho_t}(x,a) \|p_{i+1}(\cdot|x,a) - \hat{p}^t_{i+1}(\cdot|x,a) \|_1.
    \end{split}
\end{equation*}
We then apply Prop.~\ref{prop:mdp_martingale} again with the trajectories observed in MDPP-K (Alg.~\ref{alg:main1}), policy $\pi_t$, $\nu_t = \rho_t$, but now with $\xi_{t,n}(x,a) := \|p_{n+1}(\cdot|x,a) - \hat{p}_{t,n+1}(\cdot|x,a) \|_1 \leq \frac{C_\delta}{\max{\{1,\sqrt{N_{t,n}(x,a)}\}}}$, where $\xi_{t,n}(x,a) \leq 2$ by definition. Hence, we can take $C = C_\delta$ and $c = 2$ to obtain that, for any $\delta \in (0,1)$, with probability at least $1-\delta$,
\begin{equation*}
    \begin{split}
        (i) &\leq C_\delta N^2 \bigg[3 C_\delta \sqrt{|\mathcal{X}||\mathcal{A}|T} + 2|\mathcal{X}| \sqrt{2 T \log\bigg(\frac{N}{\delta}\bigg)} \bigg] \\
        &\leq 10 N^2 |\mathcal{X}|^{3/2} \sqrt{|\mathcal{A}| T} \log\bigg(\frac{|\mathcal{X} |\mathcal{A}| N T }{\delta} \bigg) \sqrt{\log \bigg(\frac{N}{\delta} \bigg)}.
    \end{split}
\end{equation*}
Joining both terms, we get that for any $\delta \in (0,1)$, with probability at least $1-2\delta$,
\begin{equation*}
    \begin{split}
        \sum_{t=1}^T \langle b_t, \hat{\mu}^{\pi_t, \rho_t}_t  \rangle &\leq 15  N^2  |\mathcal{X}|^{3/2} \sqrt{ |\mathcal{A}| T } \log\bigg(\frac{|\mathcal{X} |\mathcal{A}| N T }{\delta} \bigg) \sqrt{\log \bigg(\frac{N}{\delta} \bigg)} \\
        &= O\bigg(N^2 |\mathcal{X}|^{3/2}  \sqrt{ |\mathcal{A}| T } \log\bigg(\frac{|\mathcal{X}| |\mathcal{A}| N T}{\delta} \bigg) \bigg),
    \end{split}
\end{equation*}
where we use the definition of $C_\delta$ in Eq.~\eqref{eq:c_delta}.

Similarly, for the bonus sequence \(\bar{b}_t\), we have \(\|\bar{b}_{t,n} \|_\infty \leq \ell N C_\delta\). Therefore, by applying a similar reasoning with adjusted constants, we obtain that
\begin{equation*}
    \begin{split}
        \sum_{t=1}^T \langle \bar{b}_t, \hat{\mu}^{\pi_t, \rho_t}_t  \rangle &\leq  \ell C_\delta N^3  \bigg[3 C_\delta \sqrt{|\mathcal{X}||\mathcal{A}|T} + 2|\mathcal{X}| \sqrt{2 T \log\bigg(\frac{N}{\delta}\bigg)} \bigg]+ \sum_{t=1}^T \langle \bar{b}_t, \mu^{\pi_t, \rho_t} \rangle \\
        &\leq  10 \ell N^3 |\mathcal{X}|^{3/2} \sqrt{|\mathcal{A}| T} \log\bigg(\frac{|\mathcal{X} |\mathcal{A}| N T }{\delta} \bigg) \sqrt{\log \bigg(\frac{N}{\delta} \bigg)} + \sum_{t=1}^T \langle \bar{b}_t, \mu^{\pi_t, \rho_t} \rangle \\
        &\leq 15 \ell N^3 |\mathcal{X}|^{3/2} \sqrt{|\mathcal{A}| T} \log\bigg(\frac{|\mathcal{X} |\mathcal{A}| N T }{\delta} \bigg) \sqrt{\log \bigg(\frac{N}{\delta} \bigg)} \\
        &= O \bigg( \ell N^3 |\mathcal{X}|^{3/2} \sqrt{|\mathcal{A}| T} \log\bigg(\frac{|\mathcal{X}| |\mathcal{A}| N T}{\delta} \bigg)   \bigg).
\end{split}
\end{equation*}

\end{proof}

% \begin{lemma}\label{lemma:diff_laws_equal_diff_init}
% Let $\pi \in \Pi$ be any strategy, $p := (p_n)_{n \in [N]}$ be any probability transition kernel, and $\nu, \eta \in \Delta_{\mathcal{X} \times \mathcal{A}}$ be any two initial state-action distributions. For any sequence of vectors $v := (v_n)_{n \in [N]}$ such that $v_n \in \mathbb{R}^{\mathcal{X} \times \mathcal{A}}$, and $\|v_n\|_\infty \leq V$, 
% \[
% \langle v, \mu^{\pi, p, \nu} - \mu^{\pi,p,\eta} \rangle \leq N V \|\nu - \eta\|_1.
% \]
% \end{lemma}
% \begin{proof}
%       We start by introducing the notations $y_n := (x_n,a_n)$, and $K_n(y_{n-1}, y_n) := p_n(x_n|y_{n-1}) \pi_n(a_n|x_n)$ for all $n \in [N]$. Let
%     \begin{equation*}
%     \begin{split}
%     \Lambda_{n}(y_0) &:= \sum_{y_1} K_1(y_0, y_1) \ldots \\
%     &\sum_{y_{n-1}} K_{n-1}(y_{n-2}, y_{n-1}) \sum_{y_n} K_n(y_n, y_{n-1}) v_n(y_n) .
%     \end{split}
%     \end{equation*}
%     Note that $\langle \mu_n^{\pi, p, \nu}, v \rangle = \sum_{y_0} \nu(y_0) \Lambda_{n}(y_0)$. In addition, $\|\Lambda_{n} \|_\infty \leq \|v_n\|_\infty \leq V$. Therefore, 
%     \begin{equation*}\label{eq:lemma_aux_n_1}
%     \begin{split}
%         \langle \mu^{\pi, p, \nu} - \mu^{\pi, p, \eta}, v \rangle &= \sum_{n=1}^N \sum_{y_0} \big( \nu(y_0) - \eta(y_0) \big) \Lambda_{n}(y_0) \\
%         &\leq \sum_{n=1}^N \|\Lambda_{n}(y_0) \|_\infty \|\nu - \eta \|_1 \\
%         &\leq N V  \|\nu - \eta \|_1, \\
%     \end{split}
%     \end{equation*}
%     where we use Hölder's inequality to derive the first inequality.
% \end{proof}

\begin{lemma}\label{lemma:difference_consecutive_p}[Lemma A.3 from \cite{moreno_icml}]
    For all $n \in [N]$, $(x,a, x') \in \mathcal{X} \times \mathcal{A} \times \mathcal{X}$, and $t \in [T]$, let $\hat{p}_{t,n+1}(x'|x,a)$ be defined as in Eq.~\eqref{eq:proba_est}. Hence,
    \[
    \|\hat{p}_{t+1,n+1}(\cdot|x,a) - \hat{p}_{t,n+1}(\cdot|x,a) \|_1 \leq \frac{\mathds{1}_{\{x_{t,n} = x, a_{t,n} = a\}}}{\max\{1, N_{t+1,n}(x,a)\}}.
    \]
\end{lemma}

\begin{lemma}\label{lemma:kernel_diff_two_episodes}[Lemma A.4 from \cite{moreno_icml}]
    For $(n,x,a) \in [N] \times \mathcal{X} \times \mathcal{A}$, 
    let $(q_t)_{t \in [T]}$ be a sequence of probability transition kernels with $q_t := (q_{t,n})_{n \in [N]}$ such that
    \[
    \|q_{t+1,n}(\cdot|x,a) - q_{t,n}(\cdot|x,a) \|_1 \leq \frac{ c \mathds{1}_{\{x_{t,n-1}=x, a_{t,n-1}=a\}}}{\max{\{1, N_{t+1,n-1}(x,a)\}}}
    \]
    for some constant $c>0$. Then,
    \[
    \sum_{t=1}^T \|q_{t+1,n}(\cdot|x,a) - q_{t,n}(\cdot|x,a) \|_1 \leq e c \log(T) \,.
    \]
\end{lemma}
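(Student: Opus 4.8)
The plan is to sum the assumed per-episode bound over $t$ and recognize the resulting quantity as a truncated harmonic series in the visit count of $(x,a)$ at time step $n-1$. Summing the hypothesis term by term gives, for the fixed triple $(n,x,a)$,
\[
\sum_{t=1}^T \|q_{t+1,n}(\cdot|x,a) - q_{t,n}(\cdot|x,a) \|_1 \;\le\; c \sum_{t=1}^T \frac{\mathds{1}_{\{x_{t,n-1}=x, a_{t,n-1}=a\}}}{\max\{1, N_{t+1,n-1}(x,a)\}},
\]
so the whole problem reduces to bounding the right-hand sum.

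The key observation is that the indicator $\mathds{1}_{\{x_{t,n-1}=x, a_{t,n-1}=a\}}$ vanishes except on the episodes where the pair $(x,a)$ is visited at step $n-1$. Enumerate these episodes as $t_1 < t_2 < \cdots < t_K$, where $K := N_{T+1,n-1}(x,a) \le T$. By the definition of the counters, exactly $j-1$ such visits occur strictly before episode $t_j$, so $N_{t_j,n-1}(x,a) = j-1$ and hence $N_{t_j+1,n-1}(x,a) = j \ge 1$; in particular $\max\{1, N_{t_j+1,n-1}(x,a)\} = j$. Therefore only the $K$ episodes $t_1,\dots,t_K$ contribute, and episode $t_j$ contributes exactly $1/j$, giving
\[
\sum_{t=1}^T \frac{\mathds{1}_{\{x_{t,n-1}=x, a_{t,n-1}=a\}}}{\max\{1, N_{t+1,n-1}(x,a)\}} \;=\; \sum_{j=1}^K \frac{1}{j}.
\]

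To finish, I would bound the truncated harmonic series by $\sum_{j=1}^K 1/j \le 1 + \ln K \le 1 + \ln T$ (the case $K=0$ being trivial), and then absorb the additive constant using $1 + \ln T \le e \ln T$, which holds for $T \ge 2$ since $T \mapsto (e-1)\ln T - 1$ is increasing and positive at $T=2$. Combining the three displays yields $\sum_{t=1}^T \|q_{t+1,n}(\cdot|x,a) - q_{t,n}(\cdot|x,a) \|_1 \le e c \log T$, as claimed.

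I do not expect a genuine obstacle: the argument is a clean counting exercise. The only delicate points are the bookkeeping of whether $N_{t,n-1}$ or $N_{t+1,n-1}$ appears in the denominator — one must use that it is $N_{t+1,n-1}$, so that the contributing terms align with $1,2,\dots,K$ rather than $0,1,\dots,K-1$ (which would make the first term degenerate) — and the final constant-chasing needed to match the stated bound $e c \log T$ exactly rather than the marginally sharper $c(1+\log T)$.
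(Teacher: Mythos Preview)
Your proof is correct: the counting identification of the nonzero summands with the harmonic series $\sum_{j=1}^K 1/j$ is exactly right, and your bookkeeping on $N_{t_j+1,n-1}(x,a)=j$ is sound. The paper does not give its own proof of this lemma---it is stated as an auxiliary result cited verbatim from \cite{moreno_icml}---so there is no in-paper argument to compare against; your write-up is the standard and natural one for this bound, and the only caveat you already flagged (the bound is vacuous at $T=1$) is a non-issue in the paper's regime.
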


\section{Feasibility}\label{app:feasibility}

\subsection{Proof of Lemma~\ref{lemma:feasibility}}
\begin{proof}
    We show that for every episode $t$, there is a state-action distribution sequence $\mu \in \mathcal{M}_t$ that satisfies the constraint
    \begin{equation}\label{eq:aditional_constraints}
        \begin{split}
              \| \mu_N - \rho \|_1 \leq \langle \mu, b_{t} \rangle + \alpha \|\rho_t - \rho\|_1.
        \end{split}
    \end{equation}

    Recall that we denote by $\rho_t$ the initial state-action distribution at episode $t$. From the equivalence between solving the problem over the set of state-action distributions sequences satisfying the Bellman flow and the set of policies, we have that $\mu_N = \rho_{t} \widehat{P}_\pi^{t}$ for some policy $\pi$. 
    
    From Ass.~\ref{ass:feasibility}, there exists a periodic policy $\pi$ in the true MDP with stationary distribution $\rho$, meaning that $\rho P_\pi = \rho$. Thus, this periodic policy $\pi$ satisfies that
    \begin{equation*}
            \begin{split}
                \|\rho_{t} \widehat{P}^t_\pi - \rho \|_1 &= \|\rho_{t} \widehat{P}^t_\pi - \rho_t P_\pi + \rho_t P_\pi - \rho \|_1 \\
                &\leq  \|\rho_{t} \widehat{P}^t_\pi - \rho_t P_\pi \|_1  + \|  \rho_t P_\pi - \rho \|_1 \\
                &\leq \langle \hat{\mu}^{\pi, \rho_t}_t, b_{t} \rangle + \alpha \|\rho_t - \rho \|_1 \\
            \end{split}
    \end{equation*}
   where the last inequality follows from Lemma~\ref{lemma:mu_bonus} stating that for any $\delta \in (0,1)$, with probability $1-\delta$, $\|\rho_{t} (\widehat{P}^t_\pi - P_\pi) \|_1 \leq \langle \hat{\mu}^{\pi, \rho_t}_t, b_{t} \rangle$, and from Ass.~\ref{ass:contraction}. Hence, for $\bar{\alpha} = \alpha$, we showed the existence of a state-action distribution sequence in $\mathcal{M}_t$ satisfying the constraints in Eq.~\eqref{eq:aditional_constraints}.
\end{proof}

\subsection{Proof of Lemma~\ref{lemma:almost_equal_dist}}

\begin{proof}
 Recall that we define $\mu_t := \hat{\mu}^{ \pi_t, \rho_t}_t$ as the solution to Eq.~\eqref{iteration_md_solver} at episode $t$. 
 
 Since $\mu_t$ is the solution to the optimization problem in Eq.~\eqref{iteration_md_solver}, it satisfies the following constraint:
 \begin{equation*}
 \begin{split}
     \|\mu_{t,N} - \rho \|_1 \leq \langle \mu_t, b_{t} \rangle + \alpha \|\rho_t - \rho \|_1.
 \end{split}
 \end{equation*}

Using that $\mu_{t,N} = \rho_t \widehat{P}^t_{\pi_t}$, and that the final state-action distribution we reach when playing policy $\pi_t$ starting from the initial distribution $\rho_t$ in the true MDP is $\rho_{t+1} = \mu_N^{\pi_t, \rho_t} = \rho_t P_{\pi_t}$, we have that
\begin{equation*}
    \begin{split}
        \| \mu_N^{\pi_t, \rho_t} - \rho \|_1 &= \|  \rho_t P_{\pi_t} - \rho \|_1  \\
        &\leq \| \rho_t P_{\pi_t} - \rho_t \widehat{P}_{\pi_t}^t \|_1 + \| \rho_t \widehat{P}_{\pi_t}^t  - \rho \|_1  \\
        &\leq 2 \langle \hat{\mu}^{\pi_t, \rho_t}_t, b_{t} \rangle +  \alpha \|\rho_t - \rho \|_1,
    \end{split}
\end{equation*}
where the last inequality follows from Lemma~\ref{lemma:mu_bonus} stating that for any $\delta \in (0,1)$, with probability $1-\delta$, $\| \rho_t P_{\pi_t} - \rho_t \widehat{P}_{\pi_t}^t \|_1  \leq \langle \hat{\mu}^{ \pi, \rho_t}_t, b_{t} \rangle$, and that the policy $\pi_t$ induces a state-action distribution sequence in $\mathcal{M}_t$ satisfying the inequality constraint.

 Recall that $\rho_t := \rho_{t-1} P_{\pi_{t-1}} = \mu_N^{\pi_{t-1}, \rho_{t-1}}$. Substituting the inequalities iteratively for all $t$, until the first episode where $\rho_1 = \rho$, we derive that:
\begin{equation*}
    \begin{split}
        \| \mu_N^{\pi_t, \rho_t} - \rho \|_1 &\leq  2 \langle \mu_t, b_{t} \rangle +  \alpha \|\rho_t - \rho \|_1 \\
        &=  2 \langle \mu_t, b_t \rangle  +  \alpha \|\mu_N^{\pi_{t-1}, \rho_{t-1}} -\rho \|_1 \\
        &\leq 2 \langle \mu_t, b_t\rangle +   \alpha [ 2 \langle \mu_{t-1}, b_{t-1} \rangle + \alpha \| \rho_{t-1} - \rho \|]  \\
        &\leq  2 \sum_{s=0}^{t-1} \alpha^{s}  \langle \mu_{t-s}, b_{t-s}\rangle +   \alpha^t \underbrace{\|\rho_1 - \rho\|_1}_{=0}.
    \end{split}
\end{equation*}

Therefore, summing over $t \in [T]$, with probability $1-T \delta$,
\begin{equation*}
    \begin{split}
        \sum_{t=1}^T \| \mu_N^{\pi_t, \rho_t} - \rho \|_1 &\leq  2 \sum_{t=1}^T \sum_{s=0}^{t-1} \alpha^s \langle \mu_{t-s}, b_{t-s}\rangle. \\
    \end{split}
\end{equation*}
By rearranging the sum and using Corollary~\ref{cor:bonus_analysis} we have that with probability $1-(2+ T) \delta$, 
\begin{equation*}
    \begin{split}
         \sum_{t=0}^T\alpha^t \sum_{s=1}^{T-t} \langle \mu_s, b_s \rangle &\leq \sum_{t=0}^T  \alpha^t 15  N^2  |\mathcal{X}|^{3/2} \sqrt{ |\mathcal{A}| T } \log\bigg(\frac{|\mathcal{X} |\mathcal{A}| N T }{\delta} \bigg) \sqrt{\log \bigg(\frac{N}{\delta} \bigg)}  \\
         &\leq \frac{ 15 N^2}{1- \alpha } |\mathcal{X}|^{3/2} \sqrt{ |\mathcal{A}| T} \log\bigg(\frac{|\mathcal{X} |\mathcal{A}| N T }{\delta} \bigg) \sqrt{\log \bigg(\frac{N}{\delta} \bigg)}, \\
    \end{split}
\end{equation*}
where we use that $\sum_{t=0}^T  \alpha^t \leq \frac{1}{1 -  \alpha }$ as $0 \leq  \alpha  < 1$ by definition. 

Therefore, we obtain that for any $\delta \in  (0,1)$, with probability at least $1-(2+T)\delta$,
\begin{equation*}
    \begin{split}
    \sum_{t=1}^T \| \rho_{t+1} - \rho \|_1 &\leq \frac{ 30 N^2}{1- \alpha } |\mathcal{X}|^{3/2} \sqrt{ |\mathcal{A}| T} \log\bigg(\frac{|\mathcal{X} |\mathcal{A}| N T }{\delta} \bigg) \sqrt{\log \bigg(\frac{N}{\delta} \bigg)}, \\
    &=\tilde{O}\bigg(\frac{N^2}{1-\alpha} |\mathcal{X}|^{3/2} \sqrt{|\mathcal{A}| T} \bigg).   
    \end{split}
\end{equation*}
  
\end{proof}

\section{Upper bound on Mirror Descent term}\label{app:md_proof}

Recall that for any initial state-action distribution $\rho \in \Delta_{\mathcal{X} \times \mathcal{A}}$, and for any probability transition kernel $p$, we define in Eq.~\eqref{eq:bellman_flow} the set of state-action distribution sequences satisfying the MDP dynamics:
\[
\mathcal{M}_\rho^p := \Big\{\mu := (\mu_n)_{n \in [N]} \; \Big| \; \mu_0 = \rho \text{ and } \sum_{a} \mu_{n+1}(x,a) := \sum_{x',a'} \mu_n(x',a') p_{n+1}(x|x', a') \;   \Big\}.
\]
Let $\mathcal{M}_\rho^{p,*}$ be the subset of $\mathcal{M}^p_\rho$ where the corresponding policies (in the sense of Eq.~\eqref{mu_induced_pi}) satisfy $\pi_n(a|x) \neq 0$. We denote by $\psi : \mathbb{R}^{N \times |\mathcal{X}| \times |\mathcal{A}|} \rightarrow \mathbb{R}$ the function inducing the Bregman divergence $D_\psi$ in the sense that for all $\mu, \mu' \in \mathbb{R}^{N \times |\mathcal{X}| \times |\mathcal{A}|} $,
\begin{equation}\label{def:bregman_induction}
    D_\psi(\mu, \mu') := \psi(\mu) - \psi(\mu') - \langle \nabla \psi(\mu'), \mu - \mu' \rangle.
\end{equation}

\begin{assumption}\label{ass:gamma_inequality}
    For any $\mu$ and $\mu'$,
\begin{equation}\label{gamma_inequality}
    D_\psi(\mu, \mu') \geq \frac{1}{2} \|\mu - \mu'\|_{\infty,1}^2.
\end{equation}
\end{assumption}

From now on, we assume $D_\psi$ is one of the two following commonly used Bregman divergences satisfying Ass.~\ref{ass:gamma_inequality}:
\begin{itemize}
    \item Using the convention that \( 0 \log(0) = 0 \) and assuming \( \mu'_n(x,a) > 0 \) for all \( (n,x,a) \), let $D_\psi$ be the Kullback-Leibler divergence:
\begin{equation}\label{gamma:kl}
D_\psi(\mu, \mu') = \text{KL}(\mu, \mu') := \sum_{n=0}^N \sum_{x,a} \mu_n(x,a) \log\bigg(\frac{\mu_n(x,a)}{\mu'_n(x,a)} \bigg).
\end{equation}
By Pinsker's inequality, the result in Eq.~\eqref{gamma_inequality} holds. Note that, in this case, Pinsker's inequality remains valid even if \(\mu\) and \(\mu'\) are induced by different probability transitions.

\item For any two probability transition kernels $p, q$, for any two initial state-action distributions $\rho, \rho'$, for all $\mu \in \mathcal{M}_\rho^p$, and $\mu'\in \mathcal{M}_{\rho'}^{q,*}$ with respective policies $\pi, \pi'$, let
\begin{equation}\label{gamma:non_standard}
D_\psi(\mu, \mu') = \Gamma(\mu, \mu'):= \sum_{n=1}^N \sum_{x,a} \mu_n(x,a) \log\bigg(\frac{\pi_n(a|x)}{\pi'_n(a|x)} \bigg) + \rho(x,a) \log \bigg(\frac{\rho(x,a)}{\rho'(x,a)} \bigg).
\end{equation}
If $p = q$, Lemma B.1 from \cite{pmlr-v238-moreno24a} shows that $D_\psi$ satisfies the inequality from Eq.~\eqref{gamma_inequality}.
\end{itemize}

Lemma~\ref{lemma:aux_md} presents an auxiliary result for Online Mirror Descent, where, in each episode $t$, the iteration is performed over a space of measures \(\mathcal{M}_{\rho_t}^{q_t}\) determined by a probability transition kernel \(q_t\) and an initial state-action distribution \(\rho_t\), both of which may vary across episodes. Additionally, each iteration is performed over a set of convex constraints $\mathcal{W}_t$ that may also vary across episodes. We compare the sequence of state-action distributions \((\mu_t)_{t \in [T]}\), generated by solving each OMD iteration, with any sequence \(\nu_t := \nu^{q_t, \pi, \rho_t}\) that satisfies the constraints in \(\mathcal{W}_t\) and is induced by a fixed policy \(\pi\).

We then apply Lemma~\ref{lemma:aux_md} for the case when \( q_t := \hat{p}_t \) as defined in Eq.~\eqref{eq:proba_est}, \( \rho_t := \rho_{t-1} P_{\pi_{t-1}} = \mu_N^{\pi_{t-1}, p, \rho_{t-1}} \), where the initial distribution in the first episode is set as \( \rho_1 := \rho \), and $\mathcal{W}_t := \{ \mu := (\mu_n)_{n \in [N]} \; | \; \| \mu_n - \rho \|_1 \leq \langle \mu, b_t \rangle + \bar{\alpha} \|\rho_t - \rho\|_1 \}$. Joining this result with the feasibility results from App.~\ref{app:feasibility}, and some auxiliary Lemmas in App.~\ref{app:auxiliary}, we show in Prop.~\ref{prop:bound_md_term} a final upper bound on the term \( R_T^{\text{MD}} \) from the periodic regret decomposition presented in Sec.~\ref{sec:regret_analysis}.

\begin{lemma}\label{lemma:aux_md}
    Consider a sequence of vectors $(z_t)_{t \in [T]}$ where $z_t \in \mathbb{R}^{N \times |\mathcal{X}| \times |\mathcal{A}|}$ such that $\max_{t \in [T]} \|z_t \|_{1,\infty} \leq \zeta$. Let $(\mathcal{W}_t)_{t \in [T]}$ define a sequence of convex constraint sets for the state-action distributions sequences. Let $(\rho_t)_{t \in [T]}$ define a sequence of state-action distributions, $(q_t)_{t \in [T]}$ be a sequence of probability transitions, and $\mathcal{M}_t := \mathcal{M}_{\rho_t}^{q_t}$. Assume that $\mathcal{M}_t \cap \mathcal{W}_t$ is non-empty for all $t \in [T]$. Let
    \begin{equation*}
   \mu_{t+1} \in \argmin_{\mu \in \mathcal{M}_{t+1} \cap \mathcal{W}_{t+1}} \quad \Big\{ \eta \langle z_t, \mu \rangle +  D_\psi(\mu, \mu_{t}) \Big\}
\end{equation*}
for \(D_\psi\) as given in either Eq.~\eqref{gamma:kl} or Eq.~\eqref{gamma:non_standard}, $\mu_1$ initialized such that $\nabla \psi(\mu_1) = 0$, and $\eta >0$. For any sequence of distributions $(\nu_t)_{t \in [T]}$ with $\nu_t := \nu^{q_t, \pi, \rho_t}$ for a common policy $\pi$,
\begin{equation*}
    \begin{split}
        \eta \sum_{t=1}^T \langle z_t, \mu_t - \nu_t \rangle &\leq \eta^2 \zeta^2 T + N V_T \mathds{1}_{\{ D_\psi = \Gamma \}} - \psi(\mu_1) \\
        &+ \sum_{t=1}^T \langle \nabla \psi(\mu_t), \nu_t - \nu_{t+1} \rangle + \eta \sum_{t=1}^T \langle z_t, \nu_{t+1} - \nu_t \rangle,
    \end{split}
\end{equation*}
where $V_T \geq 1 + \max_{(n,x,a)} \sum_{t=1}^{T-1} \|q_{t,n}(\cdot|x,a) - q_{t+1,n}(\cdot|x,a) \|_1$.
\end{lemma}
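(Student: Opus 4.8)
\textbf{Proof plan for Lemma~\ref{lemma:aux_md}.}
The plan is to follow the standard Online Mirror Descent regret analysis, but carefully tracking two sources of non-stationarity: the constraint/feasible sets $\mathcal{M}_t \cap \mathcal{W}_t$ change with $t$, and --- more importantly for the $\Gamma$ divergence --- the reference point $\mu_t$ and the comparator $\nu_t$ live in sets defined by \emph{different} transition kernels $q_t$ across episodes. First I would write the first-order optimality condition for the constrained minimization defining $\mu_{t+1}$: since $\mu_{t+1}$ minimizes $\eta\langle z_t,\mu\rangle + D_\psi(\mu,\mu_t)$ over the convex set $\mathcal{M}_{t+1}\cap\mathcal{W}_{t+1}$, for any $\nu$ in that set we have $\langle \eta z_t + \nabla\psi(\mu_{t+1}) - \nabla\psi(\mu_t), \nu - \mu_{t+1}\rangle \geq 0$. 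Applying this with $\nu = \nu_{t+1}$ (which lies in $\mathcal{M}_{t+1}\cap\mathcal{W}_{t+1}$ by hypothesis, since $\nu_{t+1} = \nu^{q_{t+1},\pi,\rho_{t+1}}$ and we assume it satisfies the constraints) gives the key per-step inequality $\eta\langle z_t, \mu_{t+1} - \nu_{t+1}\rangle \leq \langle \nabla\psi(\mu_{t+1}) - \nabla\psi(\mu_t), \nu_{t+1} - \mu_{t+1}\rangle$.

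Next I would convert the right-hand side into a telescoping form via the three-point identity for Bregman divergences: $\langle \nabla\psi(\mu_{t+1}) - \nabla\psi(\mu_t), \nu_{t+1} - \mu_{t+1}\rangle = D_\psi(\nu_{t+1},\mu_t) - D_\psi(\nu_{t+1},\mu_{t+1}) - D_\psi(\mu_{t+1},\mu_t)$. Summing over $t$, the terms $D_\psi(\nu_{t+1},\mu_t) - D_\psi(\nu_{t+1},\mu_{t+1})$ would telescope \emph{if} the comparator were fixed; since $\nu_{t+1}$ moves, I would instead rewrite the sum so that consecutive terms share the same first argument, producing a "drift" term of the form $\sum_t \big(D_\psi(\nu_{t+1},\mu_{t+1}) - D_\psi(\nu_{t+2},\mu_{t+1})\big)$ or, after expanding the definition \eqref{def:bregman_induction}, a term $\sum_t \langle \nabla\psi(\mu_t), \nu_t - \nu_{t+1}\rangle$ plus a $\sum_t \big(\psi(\nu_t) - \psi(\nu_{t+1})\big)$-type telescoping piece. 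The $\langle \nabla\psi(\mu_t), \nu_t - \nu_{t+1}\rangle$ term appears verbatim in the claimed bound. Then I would shift the index: $\sum_t \langle z_t, \mu_{t+1} - \nu_{t+1}\rangle = \sum_t \langle z_t, \mu_t - \nu_t\rangle + \sum_t \langle z_t, \mu_{t+1} - \mu_t\rangle + \sum_t \langle z_t, \nu_t - \nu_{t+1}\rangle$, which is where the $\eta\sum_t\langle z_t, \nu_{t+1}-\nu_t\rangle$ term and (from bounding $\langle z_t,\mu_{t+1}-\mu_t\rangle$ against $-D_\psi(\mu_{t+1},\mu_t)$ via Young's inequality together with Ass.~\ref{ass:gamma_inequality}, i.e. $\eta\langle z_t,\mu_{t+1}-\mu_t\rangle \leq \tfrac{\eta^2}{2}\|z_t\|_{1,\infty}^2 + \tfrac12\|\mu_{t+1}-\mu_t\|_{\infty,1}^2 \leq \eta^2\zeta^2 + D_\psi(\mu_{t+1},\mu_t)$, with the $D_\psi(\mu_{t+1},\mu_t)$ cancelling the $-D_\psi(\mu_{t+1},\mu_t)$ from the three-point identity) the $\eta^2\zeta^2 T$ term emerge. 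The leftover boundary term from the telescoping gives $-\psi(\mu_1)$, using $\nabla\psi(\mu_1)=0$ and dropping the non-negative terminal Bregman divergence.

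The step I expect to be the main obstacle is the handling of the $\Gamma$ divergence under changing kernels, which is the source of the $N V_T \mathds{1}_{\{D_\psi = \Gamma\}}$ term. When $D_\psi = \text{KL}$, Pinsker's inequality (Ass.~\ref{ass:gamma_inequality}) holds even when the two arguments come from different transitions, so the three-point identity and Young's inequality go through cleanly. But for $D_\psi = \Gamma$ of \eqref{gamma:non_standard}, the inequality $\Gamma(\mu,\mu') \geq \tfrac12\|\mu-\mu'\|_{\infty,1}^2$ is only guaranteed (Lemma B.1 of \cite{pmlr-v238-moreno24a}) when $\mu$ and $\mu'$ are induced by the \emph{same} kernel. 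In the telescoping, I would be comparing $\Gamma(\nu_{t+1},\cdot)$ and $\Gamma(\nu_{t+2},\cdot)$ where $\nu_{t+1},\nu_{t+2}$ sit in sets defined by $q_{t+1},q_{t+2}$; so I would need to decompose $\Gamma$ into its policy-ratio part (which only depends on $\pi$ and the reference policy, hence is stable across episodes) plus an initial-distribution KL part, and then bound the discrepancy incurred by the kernel change. This discrepancy, after bounding how much a single-step kernel perturbation $\|q_{t,n}(\cdot|x,a) - q_{t+1,n}(\cdot|x,a)\|_1$ moves the induced state-action distribution (propagated over the $N$ horizon steps, giving the factor $N$), is exactly what $V_T \geq 1 + \max_{(n,x,a)}\sum_{t=1}^{T-1}\|q_{t,n}(\cdot|x,a)-q_{t+1,n}(\cdot|x,a)\|_1$ controls; one would invoke a bound like Lemma~\ref{lemma:bound_norm_mu_diff_mu0} to relate kernel differences to distribution differences. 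Collecting all the pieces --- the $\eta^2\zeta^2 T$ from the Young steps, the $N V_T \mathds{1}_{\{D_\psi=\Gamma\}}$ from the kernel drift, the $-\psi(\mu_1)$ boundary term, and the two comparator-drift sums $\sum_t\langle\nabla\psi(\mu_t),\nu_t-\nu_{t+1}\rangle$ and $\eta\sum_t\langle z_t,\nu_{t+1}-\nu_t\rangle$ --- yields the stated inequality.
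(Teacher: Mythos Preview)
Your overall skeleton---first-order optimality, three-point Bregman identity, Young's inequality for the $\langle z_t,\mu_t-\mu_{t+1}\rangle$ term, and an add-and-subtract to handle the moving comparator $\nu_t$---matches the paper's proof. But you have misplaced the source of the $N V_T\,\mathds{1}_{\{D_\psi=\Gamma\}}$ term, and as written your argument for $D_\psi=\Gamma$ would not close.

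Concretely: in your Young step you invoke Assumption~\ref{ass:gamma_inequality} to absorb $\tfrac12\|\mu_{t+1}-\mu_t\|_{\infty,1}^2$ into $D_\psi(\mu_{t+1},\mu_t)$. For $D_\psi=\Gamma$ this inequality is only guaranteed when both arguments share the same kernel, but $\mu_{t+1}\in\mathcal{M}_{\rho_{t+1}}^{q_{t+1}}$ and $\mu_t\in\mathcal{M}_{\rho_t}^{q_t}$, so the cancellation you claim does not go through. This is exactly where the $N V_T$ term arises in the paper's proof, not in the comparator telescoping. The paper's fix is to observe that, by the definition~\eqref{gamma:non_standard}, $\Gamma(\mu_{t+1},\mu_t)=\Gamma(\mu_{t+1},\mu_{t+1}^{\pi_t,\rho_t})$ since the second argument of $\Gamma$ depends only on the policy and initial distribution, not the kernel. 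Now both arguments live in $\mathcal{M}_{\cdot}^{q_{t+1}}$, so Pinsker applies and gives $\Gamma(\mu_{t+1},\mu_t)\geq\tfrac12\|\mu_{t+1}-\mu_{t+1}^{\pi_t,\rho_t}\|_{\infty,1}^2$. Taking $\sigma=\tfrac12$ in Young leaves a residual $\tfrac12\|\mu_t-\mu_{t+1}^{\pi_t,\rho_t}\|_{\infty,1}^2$, which Lemma~\ref{lemma:bound_norm_mu_diff_mu0} bounds by the kernel drift, giving $N V_T$ after summing.

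Conversely, the comparator telescoping (your ``Term (2)'') needs no Pinsker at all: the paper just adds and subtracts $D_\psi(\nu_t,\mu_t)$, uses the raw definition~\eqref{def:bregman_induction} to get $\sum_t\big(D_\psi(\nu_{t+1},\mu_t)-D_\psi(\nu_t,\mu_t)\big)=\psi(\nu_{T+1})-\psi(\nu_1)+\sum_t\langle\nabla\psi(\mu_t),\nu_t-\nu_{t+1}\rangle$, telescopes the remainder to $D_\psi(\nu_1,\mu_1)$, and closes with $\nabla\psi(\mu_1)=0$ and $\psi\leq 0$. This works identically for KL and $\Gamma$, so the kernel-mismatch analysis you propose there is unnecessary.
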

\begin{proof}
    Since this is a convex problem and the constraint set is non-empty, using the optimality conditions and the three points Bregman inequality, for all $\nu_{t+1} \in \mathcal{M}_{t+1} \cap \mathcal{W}_{t+1}$,
    \[
    \eta \langle z_t, \mu_{t+1} - \nu_{t+1} \rangle \leq D_\psi(\nu_{t+1}, \mu_t) - D_\psi(\nu_{t+1}, \mu_{t+1}) - D_\psi(\mu_{t+1}, \mu_t).
    \]
    Re-arranging the terms, 
    \begin{equation}\label{eq:decomp_md}
        \begin{split}
            \eta \sum_{t=1}^T \langle z_t, \mu_t - \nu_t \rangle &\leq  \underbrace{\eta \sum_{t=1}^T \langle z_t, \mu_t - \mu_{t+1} \rangle - D_\psi(\mu_{t+1}, \mu_t)}_{ (1)} + \underbrace{\sum_{t=1}^T D_\psi(\nu_{t+1}, \mu_t) - D_\psi(\nu_{t+1}, \mu_{t+1} ))}_{(2)} \\
            & + \eta \sum_{t=1}^T \langle z_t, \nu_{t+1} - \nu_t \rangle.
        \end{split}
    \end{equation}

We analyse each term separetly. 
\paragraph{Term $1$ analysis:} Applying Young's inequality for some $\sigma > 0$ we have that for each $t \in [T]$,
\[
\eta\langle z_t, \mu_t - \mu_{t+1} \rangle - D_\psi(\mu_{t+1}, \mu_t) \leq \frac{\eta^2 \|z_t\|^2_{1, \infty}}{2 \sigma} + \frac{\sigma}{2} \|\mu_t - \mu_{t+1} \|_{\infty, 1}^2 - D_\psi(\mu_{t+1}, \mu_t).
\]
\begin{itemize}
    \item $D_\psi = \text{KL}$: If $D_\psi$ is the Kullback-Leibler divergence as in Eq.~\eqref{gamma:kl}, then for $\sigma =1$, using the inequality from Eq.~\eqref{gamma_inequality},
    \[\frac{\sigma}{2} \|\mu_t - \mu_{t+1} \|_{\infty, 1}^2 - \text{KL}(\mu_{t+1}, \mu_t) \leq 0.\]
    Hence, $(1) \leq \frac{\eta^2 \zeta^2 T}{2}.$

\item $D_\psi = \Gamma:$
If \(D_\psi\) is the divergence defined in Eq.~\eqref{gamma:non_standard}, then it does not satisfy Inequality~\eqref{gamma_inequality}, as the probability transition kernel \(q_t\), which induces \(\mu_t\), may differ from \(q_{t+1}\) inducing $\mu_{t+1}$. However, by the definition of \(\Gamma\), the probability transition kernel inducing the term in the second entry is irrelevant; only the policy and initial distribution matter. Therefore, we have
\[
\Gamma(\mu_{t+1}, \mu_t) = \Gamma(\mu_{t+1}, \mu^{\pi_t, \rho_t}_{t+1}) \geq \frac{1}{2} \|\mu_{t+1} - \mu^{ \pi_t, \rho_t}_{t+1} \|_{\infty,1}^2.
\]

Thus, for $\sigma =1/2$ we have that
\begin{equation*}
    \begin{split}
      & \sum_{t=1}^T \frac{1}{4} \| \mu_t - \mu_{t+1} \|^2_{\infty,1} - \Gamma(\mu_{t+1}, \mu_t) \leq \sum_{t=1}^T \frac{1}{4} \| \mu_t - \mu_{t+1} \|^2_{\infty,1} - \frac{1}{2} \|\mu_{t+1} - \mu^{ \pi_t, \rho_t}_{t+1} \|_{\infty,1}^2  \\
        &\quad \quad \leq \sum_{t=1}^T\frac{1}{2} \|\mu_t - \mu^{ \pi_t, \rho_t}_{t+1}\|_{\infty,1}^2 \\
        &\quad \quad \leq \sum_{t=1}^T \sup_{n \in \{0, \ldots, N\} } \sum_{i=0}^{n-1} \sum_{x,a} \mu_{t,i}(x,a) \|q_{t,i+1}(\cdot|x,a) - q_{t+1,i+1}(\cdot|x,a) \|_1 \\
        &\quad \quad \leq  N V_T,
    \end{split}
\end{equation*}
where the second to last inequality comes from Lemma~\ref{lemma:bound_norm_mu_diff_mu0}. Hence,
    \[
    (1) \leq \eta^2 \zeta^2 T +  N V_T.
    \]
\end{itemize}

\paragraph{Term $2$ analysis:} In the analysis of classic online mirror descent \cite{shalev_oco}, the sum in term \((2)\) is telescopic. However, because in our case the constraint set changes at each episode due to varying probability transitions and changing initial state-action distributions, this term requires further analysis. To address this, we sum and subtract \(D_\psi(\nu_t, \mu_t)\):
\begin{equation*}
    \begin{split}
        (2) &= \sum_{t=1}^T  D_\psi(\nu_{t+1}, \mu_t) - D_\psi(\nu_{t+1}, \mu_{t+1}) \\
        &= \underbrace{\sum_{t=1}^T D_\psi(\nu_{t+1}, \mu_t) -D_\psi(\nu_t, \mu_t)}_{(i)} + \underbrace{\sum_{t=1}^T D_\psi(\nu_t, \mu_t) - D_\psi(\nu_{t+1}, \mu_{t+1})}_{(ii)}.
    \end{split}
\end{equation*}

For $(i)$, using that $\psi$ is the function inducing the Bregman divergence $D_\psi$ in the sense of Eq.~\eqref{def:bregman_induction}, we have that
\begin{equation*}
    \begin{split}
        (i) &= \sum_{t=1}^T \psi(\nu_{t+1}) - \psi(\nu_t) - \langle \nabla \psi(\mu_t), \nu_{t+1} - \nu_t \rangle \\
        &= \psi(\nu_{T+1}) - \psi(\nu_1) + \sum_{t=1}^T \langle \nabla \psi(\mu_t), \nu_t - \nu_{t+1} \rangle.
    \end{split}
\end{equation*}

Term $(ii)$ is a telescopic sum, therefore, as a Bregman divergence is always positive, 
\[
(ii) = D_\psi(\nu_1, \mu_1) - D_\psi(\nu_{T+1}, \mu_{t+1}) \leq D_\psi(\nu_1, \mu_1).
\]

Joining terms $(i)$ and $(ii)$, using that $\mu_1$ is initialized such that $\nabla \psi (\mu_1) = 0$, and that $\psi(\mu) \leq 0$ for all state-action distribution sequence $\mu$ for both Bregman divergences considered, we get that
\[
(2) \leq \psi(\nu_{T+1}) - \psi(\nu_1) + \sum_{t=1}^T \langle \nabla \psi(\mu_t), \nu_t - \nu_{t+1} \rangle + D_\psi(\nu_1, \mu_1) \leq - \psi(\mu_1) + \sum_{t=1}^T \langle \nabla \psi(\mu_t), \nu_t - \nu_{t+1} \rangle.
\]

\paragraph{Joining all terms:} replacing the upper bounds for each term in the inequality of Eq.~\eqref{eq:decomp_md}, we obtain that, 
\begin{equation*}
    \begin{split}
        \eta \sum_{t=1}^T \langle z_t, \mu_t - \nu_t \rangle &\leq \eta^2 \zeta^2 T + N V_T \mathds{1}_{\{ D_\psi = \Gamma \}} - \psi(\mu_1) \\
        &+ \sum_{t=1}^T \langle \nabla \psi(\mu_t), \nu_t - \nu_{t+1} \rangle + \eta \sum_{t=1}^T \langle z_t, \nu_{t+1} - \nu_t \rangle.
    \end{split}
\end{equation*}
\end{proof}

\subsection{Proof of Prop.~\ref{prop:bound_md_term}}
We prove Prop.~\ref{prop:bound_md_term} which bounds the $R_T^{\text{MD}}$ term of the periodic regret decomposition.
\begin{proof}
    Let $\nu_t := \hat{\mu}^{ \pi, \rho_t}_t$ for all $t \in [T]$, where $\pi$ is any periodic policy in the true MDP for the initial distribution $\rho$, \emph{i.e.}, $\rho P_\pi = \rho$. From the feasibility result on Lemma~\ref{lemma:feasibility}, for all $t \in [T]$, we have that with high probability,
    \begin{equation}\label{eq:set_convex_constraints}
        \nu_t \in \mathcal{W}_t := \{ \mu := (\mu_n)_{n \in [N]} \; | \; \| \mu_n - \rho \|_1 \leq \langle \mu, b_t \rangle + \bar{\alpha} \|\rho_t - \rho\|_1 \},
    \end{equation}
    and $\mathcal{W}_t$ is a convex set on the sequences of state-action distributions. Hence, the set $\mathcal{M}_t \cap \mathcal{W}_t$ is convex and non-empty. 

    Since $f_{t,n}$ is $\ell$-Lipschitz with respect to the $\|\cdot\|_1$ norm, we have $\|\ell_t\|_{1, \infty} \leq N\ell$. From the definition of the bonus vector $\bar{b}_t$ in Eq.~\eqref{eq:bonus}, it follows that $\|\bar{b}_t\|_{1, \infty} \leq \ell C_\delta N^2$, where $C_\delta$ is given in Eq.~\eqref{eq:c_delta}. By applying Lemma~\ref{lemma:aux_md} with $z_t := \ell_t - \bar{b}_t$ such that $\zeta \leq 2 \ell N^2 C_\delta$, $q_t := \hat{p}_t$ as defined in Eq.~\eqref{eq:proba_est}, $\rho_t := \rho_{t-1} P_{\pi_{t-1}} = \mu_N^{\pi_{t-1}, \rho_{t-1}}$ for all $1 < t \leq T$, $\rho_1 = \rho$, and $\mathcal{W}_t$ as defined in Eq.~\eqref{eq:set_convex_constraints}, and noting that $\mu_t$ is the solution to the optimization problem in Eq.~\eqref{iteration_md_solver}, we obtain:
    \begin{equation}\label{eq:inequality_md}
    \begin{split}
          R_T^{\text{MD}} := \sum_{t=1}^T \langle \ell_t - \bar{b}_t, \mu_t - \hat{\mu}^{\pi, \rho_t}_t \rangle &\leq \eta (2 \ell N^2 C_\delta)^2 T + \frac{2}{\eta}  N e \log(T) \mathds{1}_{\{ D_\psi = \Gamma \}}  - \frac{\psi(\mu_1)}{\eta} \\
          &+ \frac{1}{\eta} \underbrace{\sum_{t=1}^T \langle \nabla \psi(\mu_t), \nu_t - \nu_{t+1} \rangle}_{(i)} +  \underbrace{\sum_{t=1}^T \langle \ell^t - \bar{b}_t, \nu_{t+1} - \nu_t \rangle}_{(ii)},
    \end{split}
    \end{equation}
    where we also use that for all $(n,x,a)$, $V_T := 1 + \sum_{t=1}^T \|\hat{p}_{t,n}(\cdot|x,a) - \hat{p}_{t+1,n}(\cdot|x,a) \|_1 \leq e \log(T)$ from Lemmas~\ref{lemma:difference_consecutive_p} and~\ref{lemma:kernel_diff_two_episodes}. We now analyze terms $(i)$ and $(ii)$ considering that $\nu_t = \hat{\mu}^{\pi, \rho_t}_t$.

\paragraph{Term $(i)$:} We assume that $\|\nabla \psi(\mu_{t})\|_{1,\infty} \leq \Psi$. In practice, to ensure this upper bound, one can define a smoothed version of the policy by mixing it with a uniform policy, as proposed in \cite{pmlr-v238-moreno24a}. This modification results in only minor changes to the regret, affecting the final regret bound by an additional logarithmic term. Therefore, for clarity of presentation, we assume directly that the gradient of $\psi$ is bounded.

From Holder's inequality, we have that, for any $\delta \in (0,1)$, with probability at least $1-(2+T)\delta$, 
\begin{equation}\label{eq:bound_i_md}
\begin{split}
   & \sum_{t=1}^T \langle \nabla \psi(\mu_{t}), \nu_{t} - \nu_{t+1} \rangle \leq \sum_{t=1}^T \|\nabla \psi(\mu_{t})\|_{1,\infty} \|\nu_{t} - \nu_{t+1} \|_{\infty,1} \\
    &\quad \quad \leq  \Psi \sum_{t=1}^T \sup_{n \in \{0,\ldots,N\}}  \|\nu_{t,n} - \nu_{t+1,n} \|_{\infty,1} \\
    &\quad \quad =  \Psi \sum_{t=1}^T \sup_{n \in \{0,\ldots,N\}} \|\hat{\mu}^{\pi, \rho_t}_{t,n} - \hat{\mu}^{\pi, \rho_{t+1}}_{t+1,n} \|_1 \\
    &\quad \quad \underbrace{\leq}_{\text{Lemma~\ref{lemma:bound_norm_mu_diff_mu0}}} \Psi \sum_{t=1}^T \bigg[ \sup_{\{0,\ldots,N\}} \sum_{i=1}^n \sum_{x,a} \hat{\mu}_{t,i-1}^{\pi, \rho_t}(x,a) \|\hat{p}_{t,i}(\cdot|x,a) - \hat{p}_{t+1,i}(\cdot|x,a) \|_1 + \|\rho_t - \rho_{t+1} \|_1 \bigg] \\
    &\quad \quad \underbrace{\leq}_{\text{Lemmas~\ref{lemma:difference_consecutive_p} and~\ref{lemma:kernel_diff_two_episodes}}} \Psi N e \log(T) + \Psi \sum_{t=1}^T \|\rho_t - \rho_{t+1} \|_1 \\
    &\quad \quad \underbrace{\leq}_{\text{Lemma~\ref{lemma:almost_equal_dist}}} \Psi \bigg[ N e \log(T) +  \frac{ 60 N^2}{1- \alpha } |\mathcal{X}|^{3/2} \sqrt{ |\mathcal{A}| T} \log\bigg(\frac{|\mathcal{X} |\mathcal{A}| N T }{\delta} \bigg) \sqrt{\log \bigg(\frac{N}{\delta} \bigg)}
    \bigg].
\end{split}
\end{equation}

\paragraph{Term $(ii)$:} The analysis of the second term is similar to the first, but it depends on $|\ell_t - \bar{b}_t|_{1, \infty}$ instead of $\Psi$. Thus,
\[
\sum_{t=1}^T \langle \ell_t - \bar{b}_t, \nu_{t+1} - \nu_t \rangle \leq 2 \ell N^2 C_\delta \bigg[ N e \log(T) + \frac{ 60 N^2}{1- \alpha } |\mathcal{X}|^{3/2} \sqrt{ |\mathcal{A}| T} \log\bigg(\frac{|\mathcal{X} |\mathcal{A}| N T }{\delta} \bigg) \sqrt{\log \bigg(\frac{N}{\delta} \bigg)} \bigg].
\]

\paragraph{Final step on $R_T^{\text{MD}}$ analysis:} let 
\[
\beta = N e \log(T) \mathds{1}_{\{ D_\psi = \Gamma \}}  - \psi(\mu_1) + \Psi \bigg[ N e \log(T) +  \frac{ 60 N^2}{1- \alpha } |\mathcal{X}|^{3/2} \sqrt{ |\mathcal{A}| T} \log\bigg(\frac{|\mathcal{X} |\mathcal{A}| N T }{\delta} \bigg) \sqrt{\log \bigg(\frac{N}{\delta} \bigg)}
    \bigg].
\]
Joining the bounds of terms $(i)$ and $(ii)$ in Eq.~\eqref{eq:inequality_md}, we obtain that with probability at least $1 - 2(2 + T)\delta$,
\begin{equation*}
\begin{split}
    R_T^{\text{MD}} &\leq \eta (2 \ell N^2 C_\delta)^2 T + \frac{1}{\eta} \beta + 2 \ell N^2 C_\delta \bigg[ N e \log(T) + \frac{ 60 N^2}{1- \alpha } |\mathcal{X}|^{3/2} \sqrt{ |\mathcal{A}| T} \log\bigg(\frac{|\mathcal{X} |\mathcal{A}| N T }{\delta} \bigg) \sqrt{\log \bigg(\frac{N}{\delta} \bigg)} \bigg].
\end{split}
\end{equation*}
For $\eta = \sqrt{\frac{\beta}{(2 \ell N^2 C_\delta)^2 T}}$, we then obtain that with high probability
\[
R_T^{\text{MD}} \leq \tilde{O} \bigg( \ell N^3 |\mathcal{X}|^{5/4} |\mathcal{A}|^{1/4} \sqrt{\frac{\Psi}{1 - \alpha}} T^{3/4} \bigg).
\]

\end{proof}

\section{Main result known $\rho_t$}\label{app:main_result}
Before presenting the main periodic regret bound for our algorithm with known \( \rho_t \), we analyze the terms \( R_T^{\text{MDP}} \) and \( R_T^{\text{bonus}} \) from the regret decomposition in Sec.~\ref{sec:algorithm}. These results build upon and generalize existing results in the literature (see, for example, \cite{moreno_icml}). We also prove below Prop.~\ref{prop:diff_rho_rhot} upper bounding the regret term $R_T^{\text{diff. $\rho_t$}}$.

\subsection{Upper bound on $R_T^{\text{MDP}}$}
\begin{proposition}\label{prop:R_T_mdp}
For any $\delta \in (0,1)$, with probability at least $1-2\delta$,
    \[
    R_T^{\text{MDP}} \leq \ell N^2 \bigg( 3 \sqrt{2} |\mathcal{X}| \sqrt{ |\mathcal{A}| T \log\bigg(\frac{|\mathcal{X}| |\mathcal{A}| N T }{\delta}}\bigg) + 2 |\mathcal{X}| \sqrt{2 T \log\bigg(\frac{N}{\delta}\bigg)} \bigg).
    \]
\end{proposition}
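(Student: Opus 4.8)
The plan is to reduce the statement to the Lipschitz continuity of the losses together with the two concentration tools already in hand, Lemma~\ref{lemma:proba_difference} and Proposition~\ref{prop:mdp_martingale}. First I would write $R_T^{\text{MDP}} = \sum_{t=1}^T \sum_{n=1}^N \big( f_{t,n}(\mu^{\pi_t, \rho_t}_n) - f_{t,n}(\hat{\mu}^{\pi_t, \rho_t}_{t,n}) \big)$ and use that each $f_{t,n}$ is $\ell$-Lipschitz with respect to $\|\cdot\|_1$, so that $R_T^{\text{MDP}} \le \ell \sum_{t=1}^T \sum_{n=1}^N \|\mu^{\pi_t, \rho_t}_n - \hat{\mu}^{\pi_t, \rho_t}_{t,n}\|_1$. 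The two occupancy sequences $\mu^{\pi_t, \rho_t}$ and $\hat{\mu}^{\pi_t, \rho_t}_t$ are induced by the \emph{same} policy $\pi_t$ and the \emph{same} initial distribution $\rho_t$, but under the kernels $p$ and $\hat{p}_t$ respectively, so Lemma~\ref{lemma:bound_norm_mu_diff_mu0} (with $\nu = \nu' = \rho_t$, $q = \hat{p}_t$) gives $\|\mu^{\pi_t, \rho_t}_n - \hat{\mu}^{\pi_t, \rho_t}_{t,n}\|_1 \le \sum_{i=0}^{n-1}\sum_{x,a}\mu^{\pi_t, \rho_t}_i(x,a)\,\|p_{i+1}(\cdot|x,a) - \hat{p}_{t,i+1}(\cdot|x,a)\|_1$. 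Bounding $\sum_{n=1}^N \sum_{i=0}^{n-1}(\cdots) \le N\sum_{i=0}^{N-1}(\cdots)$ then yields $R_T^{\text{MDP}} \le \ell N \sum_{i=0}^{N-1} \sum_{t=1}^T \langle \mu^{\pi_t, \rho_t}_i, \xi_{t,i}\rangle$ with $\xi_{t,i}(x,a) := \|p_{i+1}(\cdot|x,a) - \hat{p}_{t,i+1}(\cdot|x,a)\|_1$.

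Next I would control $\sum_{t=1}^T \langle \mu^{\pi_t, \rho_t}_i, \xi_{t,i}\rangle$ for each fixed $i$ via Proposition~\ref{prop:mdp_martingale}. By Lemma~\ref{lemma:proba_difference}, with probability at least $1-\delta$ we have $\xi_{t,i}(x,a) \le C_\delta / \sqrt{\max\{1, N_{t,i}(x,a)\}}$ simultaneously over all $(t,i,x,a)$, where $C_\delta = \sqrt{2|\mathcal{X}|\log(|\mathcal{X}||\mathcal{A}|NT/\delta)}$; moreover $\xi_{t,i}(x,a) \le 2$ trivially since it is an $L_1$ distance between probability vectors. The quantity $\xi_{t,i}$ is $\mathcal{F}_t$-measurable because $\hat{p}_t$ depends only on trajectories from episodes $<t$, and in Algorithm~\ref{alg:main1} the trajectory observed in each episode is sampled independently across episodes from agents initialized at $\rho_t$, so the hypotheses of Proposition~\ref{prop:mdp_martingale} are met with $\nu_t = \rho_t$, $C = C_\delta$ and $c = 2$. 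Hence, on a further event of probability at least $1-\delta$ holding for every $i$ simultaneously, $\sum_{t=1}^T \langle \mu^{\pi_t, \rho_t}_i, \xi_{t,i}\rangle \le 3 C_\delta \sqrt{|\mathcal{X}||\mathcal{A}|T} + 2|\mathcal{X}|\sqrt{2T\log(N/\delta)}$.

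Summing over the $N$ indices $i$ picks up a second factor of $N$, so on the intersection of the two events (probability at least $1-2\delta$), $R_T^{\text{MDP}} \le \ell N^2 \big( 3 C_\delta \sqrt{|\mathcal{X}||\mathcal{A}|T} + 2|\mathcal{X}|\sqrt{2T\log(N/\delta)} \big)$. Substituting $C_\delta$ and simplifying $3 C_\delta \sqrt{|\mathcal{X}||\mathcal{A}|T} = 3\sqrt{2}\,|\mathcal{X}|\sqrt{|\mathcal{A}|T\log(|\mathcal{X}||\mathcal{A}|NT/\delta)}$ gives exactly the claimed bound.

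The step I expect to require the most care is the invocation of Proposition~\ref{prop:mdp_martingale}: one must verify that $\pi_t$, $\rho_t$, and $\xi_{t,i}$ are all $\mathcal{F}_t$-measurable and that the per-episode trajectory sampling is independent of the past, which is precisely what makes $\mathbb{E}[\mathds{1}_{\{x_{t,i}=x,\,a_{t,i}=a\}}\mid\mathcal{F}_t] = \mu^{\pi_t, \rho_t}_i(x,a)$ and thus turns the relevant sum into a martingale difference sequence amenable to Azuma–Hoeffding. The remainder is bookkeeping — in particular, keeping track of the two independent factors of $N$ (one from summing the telescoped bound of Lemma~\ref{lemma:bound_norm_mu_diff_mu0} over $n$, one from summing over the starting index $i$) and the union bound combining Lemma~\ref{lemma:proba_difference} with the martingale estimate.
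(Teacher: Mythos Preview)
Your proposal is correct and follows essentially the same route as the paper: Lipschitz continuity of the $f_{t,n}$ to pass to $\ell\sum_{t,n}\|\mu^{\pi_t,\rho_t}_n-\hat\mu^{\pi_t,\rho_t}_{t,n}\|_1$, then Lemma~\ref{lemma:bound_norm_mu_diff_mu0} to expose $\sum_{i,x,a}\mu^{\pi_t,\rho_t}_i(x,a)\|p_{i+1}-\hat p_{t,i+1}\|_1$, and finally Proposition~\ref{prop:mdp_martingale} with $\nu_t=\rho_t$, $C=C_\delta$, $c=2$ and a union over the two concentration events. The paper differs only cosmetically by first invoking convexity to linearize before using the Lipschitz bound, whereas you apply Lipschitz directly; the remaining bookkeeping of the two factors of $N$ and the final substitution of $C_\delta$ is identical.
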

\begin{proof}
    Using that $f_{t,n}$ is convex and $\ell$-Lipschitz with respect to the norm $\|\cdot\|_{1}$ for all $t \in [T]$ and $n \in [N]$, recalling that \( (x_{t,n}, a_{t,n})_{n \in [N]} \) denotes the agent trajectory observed at episode \( t \), and that the trajectory observed in episode $t$ is independent of the previous trajectories by the design of MDPP-K (Alg.~\ref{alg:main1}), we obtain that
    \begin{equation*}
        \begin{split}
            R_T^{\text{MDP}} &= \sum_{t=1}^T F_t(\mu^{\pi_t, \rho_t}) - F_t(\hat{\mu}^{\pi_t, \rho_t}_t) \\
            &\leq \sum_{t=1}^T \langle \nabla F_t(\mu^{\pi_t, \rho_t}), \mu^{\pi_t, \rho_t} - \hat{\mu}^{\pi_t, \rho_t}_t \rangle \\
            &\leq \ell \sum_{t=1}^T \sum_{n=1}^N \| \mu^{\pi_t, \rho_t}_n - \hat{\mu}^{\pi_t, \rho_t}_{t,n} \|_1 \\
            &\underbrace{\leq}_{\text{Lemma~\ref{lemma:bound_norm_mu_diff_mu0}}} \ell \sum_{t=1}^T \sum_{n=1}^N \sum_{i=0}^{n-1} \sum_{x,a} \mu_i^{\pi_t, \rho_t}(x,a) \|p_{i+1}(\cdot|x,a) - \hat{p}_{t,i+1}(\cdot|x,a)\|_1. \\
        \end{split}
    \end{equation*}

Applying Prop.~\ref{prop:mdp_martingale} with $\nu_t = \rho_t$, with $\pi_t$ the policy used to observe the trajectories in the true MDP, and with $\xi_{t,n}(x,a) =  \|p_{n+1}(\cdot|x,a) - \hat{p}_{t,n+1}(\cdot|x,a)\|_1 \leq \frac{C_\delta}{\sqrt{N_{t,n}(x,a)}}$, such that $C = C_\delta$, and $c = 2$, and where the inequality happens with high probability and follows from Lemma~\ref{lemma:proba_difference}, we have that
\begin{equation*}
    \begin{split}
        R_T^{\text{MDP}} &\leq \ell \sum_{n=1}^N \sum_{i=0}^{n-1} \bigg( 3 C_\delta \sqrt{|\mathcal{X}| |\mathcal{A}| T} + 2 |\mathcal{X}| \sqrt{2 T \log\bigg(\frac{N}{\delta}\bigg) \bigg)} \\
        &\leq \ell N^2 \bigg( 3 \sqrt{2} |\mathcal{X}| \sqrt{ |\mathcal{A}| T \log\bigg(\frac{|\mathcal{X}| |\mathcal{A}| N T }{\delta}}\bigg) + 2 |\mathcal{X}| \sqrt{2 T \log\bigg(\frac{N}{\delta}\bigg)} \bigg).
    \end{split}
\end{equation*}

\end{proof}

\subsection{Upper bound on $R_T^{\text{bonus}}$}
\begin{proposition}\label{prop:R_T_bonus}
For any $\delta \in (0,1)$, with probability at least $1-\delta$, 
    \[
R_T^{\text{bonus}} \leq \frac{1}{1-\alpha} \tilde{O} \big(\ell N^3 |\mathcal{X}|^{2} \sqrt{|\mathcal{A}| T} \big).
\]
\end{proposition}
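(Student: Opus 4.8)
\textbf{Proof plan for Proposition~\ref{prop:R_T_bonus}.} Recall from the regret decomposition in Sec.~\ref{sec:regret_analysis} that
\[
R_T^{\text{bonus}} = \sum_{t=1}^T \langle \bar{b}_t, \mu_t - \hat{\mu}^{\pi, \rho}_t \rangle + \sum_{t=1}^T \langle \ell_t, \hat{\mu}^{\pi, \rho}_t - \mu^{\pi, \rho} \rangle,
\]
where $\mu_t = \hat{\mu}^{\pi_t,\rho_t}_t$, $\ell_t = \nabla F_t(\mu_t)$, and $\pi$ is any periodic policy. The plan is to control these two sums separately. For the first sum I would split $\langle \bar{b}_t, \mu_t - \hat{\mu}^{\pi,\rho}_t\rangle = \langle \bar{b}_t, \mu_t\rangle - \langle \bar{b}_t, \hat{\mu}^{\pi,\rho}_t\rangle$; the term $\sum_t \langle \bar{b}_t, \mu_t\rangle$ is $\tilde O(\ell N^3 |\mathcal{X}|^{3/2}\sqrt{|\mathcal{A}|T})$ directly by Corollary~\ref{cor:bonus_analysis}, and $-\langle \bar{b}_t, \hat{\mu}^{\pi,\rho}_t\rangle \le 0$ since $\bar b_t$ and $\hat\mu^{\pi,\rho}_t$ are nonnegative, so this part is harmless.

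The crux is the second sum, $\sum_t \langle \ell_t, \hat{\mu}^{\pi,\rho}_t - \mu^{\pi,\rho}\rangle$, which measures the error from evaluating the comparator policy $\pi$ in the estimated MDP $\hat p_t$ (started from the \emph{true} target $\rho$) rather than in the true MDP. Using $\|\ell_t\|_{1,\infty} \le N\ell$ and Hölder, this is bounded by $N\ell \sum_t \sup_n \|\hat\mu^{\pi,\rho}_{t,n} - \mu^{\pi,\rho}_n\|_1$. By Lemma~\ref{lemma:bound_norm_mu_diff_mu0} (with identical initial distributions $\rho$), $\|\hat\mu^{\pi,\rho}_{t,n} - \mu^{\pi,\rho}_n\|_1 \le \sum_{i=0}^{n-1}\sum_{x,a} \mu^{\pi,\rho}_i(x,a)\,\|p_{i+1}(\cdot|x,a) - \hat p_{t,i+1}(\cdot|x,a)\|_1$. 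Here is the main obstacle: the measure $\mu^{\pi,\rho}_i$ is \emph{not} the occupancy measure of any agent the learner actually observes — agents are initialized from $\rho_t$, not $\rho$, and follow $\pi_t$, not $\pi$ — so Prop.~\ref{prop:mdp_martingale} cannot be applied directly with $\nu_t = \rho$ and this $\pi$. To get around this I would first pass from $\mu^{\pi,\rho}_i$ to the observed occupancy $\mu^{\pi_t,\rho_t}_i$ on which $\hat p_t$ is accurate; concretely, since $\hat p_{t,i+1}$ is only guaranteed close to $p_{i+1}$ on state-action pairs visited often by the \emph{running} policy, I would bound $\|p_{i+1}(\cdot|x,a)-\hat p_{t,i+1}(\cdot|x,a)\|_1 \le C_\delta/\sqrt{\max\{1,N_{t,i}(x,a)\}}$ via Lemma~\ref{lemma:proba_difference}, and then control $\sum_t \sum_{x,a}\mu^{\pi,\rho}_i(x,a)\,C_\delta/\sqrt{\max\{1,N_{t,i}(x,a)\}}$ by a change-of-measure argument. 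The change-of-measure step uses the contraction Assumption~\ref{ass:contraction}: since $\pi$ is periodic and $\rho$ is its stationary law, the occupancy $\mu^{\pi,\rho}$ stays ``close'' in an averaged sense to the occupancies $\mu^{\pi_t,\rho_t}$ that generate the counts — or, more robustly, one reweights by noting $\mu^{\pi,\rho}_i(x,a) \le$ (something controllable) and invokes a pigeonhole/$\sqrt{N_{T,i}}$ bound over the counts as in Lemma~19 of \cite{UCRL-2}, picking up the $\tfrac{1}{1-\alpha}$ factor from summing the geometric drift series exactly as in the proof of Lemma~\ref{lemma:almost_equal_dist}.

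Assembling: the first sum contributes $\tilde O(\ell N^3 |\mathcal{X}|^{3/2}\sqrt{|\mathcal{A}|T})$ with no $(1-\alpha)^{-1}$, and the second sum contributes $\tfrac{1}{1-\alpha}\tilde O(\ell N^3 |\mathcal{X}|^{2}\sqrt{|\mathcal{A}|T})$ — the extra $\sqrt{|\mathcal{X}|}$ relative to the first arising from the constant $C_\delta = \tilde\Theta(\sqrt{|\mathcal{X}|})$ in the bonus definition combined with the $\sqrt{|\mathcal{X}||\mathcal{A}|T}$-type count bound, mirroring the $|\mathcal{X}|^{3/2}$ discussion in \cite{moreno_icml}. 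The dominant term is the second, giving $R_T^{\text{bonus}} \le \tfrac{1}{1-\alpha}\tilde O(\ell N^3 |\mathcal{X}|^{2}\sqrt{|\mathcal{A}|T})$. I expect the bookkeeping of the high-probability events (a union bound over the two concentration inequalities from Lemma~\ref{lemma:proba_difference} and Prop.~\ref{prop:mdp_martingale}) to be routine, and the change-of-measure argument tying $\mu^{\pi,\rho}$ to the observed counts to be the only genuinely delicate point.
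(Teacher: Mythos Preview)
Your plan has a genuine gap at the very first step: you throw away $-\langle \bar{b}_t, \hat{\mu}^{\pi,\rho}_t\rangle$ as ``harmless'' because it is nonpositive, but this term is precisely what makes the whole bonus construction work. In the paper's proof, the second sum is bounded \emph{not} by a change-of-measure argument, but by unfolding Lemma~\ref{lemma:bound_norm_mu_diff_mu0} against $\hat{\mu}^{\pi,\rho}_{t,i}$ (the \emph{estimated}-MDP occupancy, not $\mu^{\pi,\rho}_i$) and applying Lemma~\ref{lemma:proba_difference}, which yields
\[
\sum_{t=1}^T \langle \ell_t, \hat{\mu}^{\pi,\rho}_t - \mu^{\pi,\rho} \rangle \;\le\; \sum_{t=1}^T \langle \bar{b}_t, \hat{\mu}^{\pi,\rho}_t \rangle \;+\; \sum_{t=1}^T \langle \bar{b}_{t,0}, \rho \rangle.
\]
The $\langle \bar{b}_t, \hat{\mu}^{\pi,\rho}_t \rangle$ on the right then \emph{cancels} exactly with the $-\langle \bar{b}_t, \hat{\mu}^{\pi,\rho}_t\rangle$ from the first sum that you discarded. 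After the cancellation, what remains is $\sum_t \langle \bar{b}_t, \mu_t\rangle + \sum_t \langle \bar{b}_{t,0}, \rho\rangle$, and only the latter involves the unobserved $\rho$ --- at a single time step $n=0$. The paper handles it by writing $\rho = \rho_t + (\rho - \rho_t)$: the $\rho_t$ part is absorbed into Corollary~\ref{cor:bonus_analysis} (since $\mu_{t,0} = \rho_t$), and the drift part $\sum_t \langle \bar{b}_{t,0}, \rho - \rho_t\rangle \le \ell N C_\delta \sum_t \|\rho - \rho_t\|_1$ is controlled by Lemma~\ref{lemma:almost_equal_dist}. This is where the $(1-\alpha)^{-1}$ and the extra $\sqrt{|\mathcal{X}|}$ actually come from --- not from any change of measure.

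Your proposed change-of-measure step, by contrast, does not go through: Assumption~\ref{ass:contraction} gives contraction of the full $N$-step map $P_\pi$ in total variation, not a pointwise density-ratio bound between $\mu^{\pi,\rho}_i$ and $\mu^{\pi_t,\rho_t}_i$, and there is no mechanism in the assumptions by which $\sum_t \sum_{x,a}\mu^{\pi,\rho}_i(x,a)/\sqrt{N_{t,i}(x,a)}$ can be tied to the observed counts. The obstacle you correctly identified --- that $\mu^{\pi,\rho}$ is not an observed occupancy --- is resolved in the paper by the bonus cancellation, not by reweighting.
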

\begin{proof}
    Using Holder's inequality and that $\|\ell_{t,n}\|_{\infty} \leq \ell$ for all $t \in [T]$ and $n \in [N]$, as $\ell$ is the Lipschitz constant of $f_{t,n}$ with respect to the norm $\|\cdot\|_{1}$, it follows that
    \begin{equation*}
        \begin{split}
            \sum_{t=1}^T \langle \ell_t, \hat{\mu}^{\pi, \rho}_t - \mu^{\pi,\rho} \rangle &\leq \ell \sum_{t=1}^T \sum_{n=1}^N \|\hat{\mu}_{t,n}^{\pi,\rho} - \mu_n^{\pi,\rho} \|_1 \\ &\underbrace{\leq}_{\text{Lemma~\ref{lemma:bound_norm_mu_diff_mu0}}} \ell \sum_{t=1}^T \sum_{n=1}^N \sum_{i=0}^{n-1} \sum_{x,a} \hat{\mu}^{\pi,\rho}_{t,i}(x,a) \|\hat{p}_{t,i+1}(\cdot|x,a) - p_{i+1}(\cdot|x,a) \|_1 \\
            &\underbrace{\leq}_{\text{Lemma~\ref{lemma:proba_difference}}} \ell \sum_{t=1}^T \sum_{n=1}^N \sum_{i=0}^{n-1} \sum_{x,a} \hat{\mu}^{\pi,\rho}_{t,i}(x,a) \frac{C_\delta}{\sqrt{N_{t,i}(x,a)}} \\
            &= \ell \sum_{t=1}^T \sum_{n=0}^N (N-n) \sum_{x,a} \hat{\mu}^{\pi,\rho}_{t,n}(x,a) \frac{C_\delta}{\sqrt{N_{t,n}(x,a)}} \\
            &= \sum_{t=1}^T \langle \bar{b}_t,  \hat{\mu}^{\pi,\rho}_t \rangle + \sum_{t=1}^T \langle \bar{b}_{t,0},  \rho  \rangle,
        \end{split}
    \end{equation*}
    where $\bar{b}_t := (\bar{b}_{t,n})_{n \in [N]}$ is the sequence of bonus vectors defined for all $n \in \{0, \ldots, N\}$ and $(x,a)$ as in Eq.~\eqref{eq:bonus} by
    \[
    \bar{b}_{t,n}(x,a) := \ell (N-n) \frac{C_\delta}{\sqrt{N_{t,n}(x,a)}},
    \]
    with $C_\delta := \sqrt{2 |\mathcal{X}| \log \bigg(\frac{|\mathcal{X} | |\mathcal{A}| N T }{\delta}} \bigg)$.

Hence, replacing it in the bonus regret term we have that
\begin{equation*}
    \begin{split}
        R_T^{\text{bonus}} &= \sum_{t=1}^T \langle \bar{b}_t, \mu_t - \hat{\mu}^{\pi, \rho}_t \rangle + \langle \ell_t,  \hat{\mu}^{\pi, \rho}_t - \mu^{\pi, \rho} \rangle \\
        &\leq  \sum_{t=1}^T \langle \bar{b}_t, \mu_t \rangle + \sum_{t=1}^T \langle \bar{b}_{t,0},  \rho  \rangle \\
        &= \underbrace{\sum_{t=1}^T \langle \bar{b}_t, \mu_t \rangle + \langle \bar{b}_{t,0}, \rho_t \rangle}_{(i)} + \underbrace{\sum_{t=1}^T \langle \bar{b}_{t,0}, \rho - \rho_t \rangle}_{(ii)}.
    \end{split}
\end{equation*}

Note that term $(ii)$ arises because we cannot guarantee that the initial distribution at each episode is equal to $\rho$. This issue does not occur when using bonuses in episodic MDPs, as done in \cite{moreno_icml}. We analyze this additional term in more detail below.

As $\mu_0^t = \rho_t$, we have from Corollary~\ref{cor:bonus_analysis} that the first term satisfies for any $\delta \in (0,1)$, with probability at least $1-\delta$, 
\[
(i) = \sum_{t=1}^T \langle \bar{b}_t, \mu_t \rangle + \langle \bar{b}_{t,0}, \rho_t \rangle \leq 15 \ell N^3 |\mathcal{X}|^{3/2} \sqrt{|\mathcal{A}| T} \log\bigg(\frac{|\mathcal{X}| |\mathcal{A}| N T}{\delta} \bigg) \sqrt{\log\bigg(\frac{N}{\delta}\bigg)}.
\]

We now proceed with the second term. Using Holder's inequality, that $\|\bar{b}_{t,0}\|_\infty \leq \ell N C_\delta$, and Lemma~\ref{lemma:almost_equal_dist}, we have that with high probability
\begin{equation*}
    \begin{split}
        (ii) = \sum_{t=1}^T \langle \bar{b}_{t,0}, \rho - \rho_t \rangle &\leq \sum_{t=1}^T \|\bar{b}_{t,0} \|_\infty \|\rho - \rho_t \|_1 \\
        &\leq \ell N C_\delta \sum_{t=1}^T \|\rho - \rho_t \|_1 \\
        &\leq \ell N C_\delta  \frac{30 N^2}{1-\alpha} |\mathcal{X}|^{3/2} \sqrt{|\mathcal{A}| T}  \log\bigg(\frac{|\mathcal{X}| |\mathcal{A}| N T}{\delta} \bigg) \sqrt{\log\bigg(\frac{N}{\delta}\bigg)} \\
        &= \frac{\ell 30 \sqrt{2} N^3}{1-\alpha} |\mathcal{X}|^2 \sqrt{|\mathcal{A}| T}  \log^{3/2}\bigg(\frac{|\mathcal{X}| |\mathcal{A}| N T}{\delta} \bigg) \sqrt{\log\bigg(\frac{N}{\delta}\bigg)},
    \end{split}
\end{equation*}
where for the last inequality we use the definition of $C_\delta$.

Joinining both terms, we obtain that with high probability, 
\begin{equation*}
    \begin{split}
        R_T^{\text{bonus}} &\leq  15 \ell N^3 |\mathcal{X}|^{3/2} \sqrt{|\mathcal{A}| T} \log\bigg(\frac{|\mathcal{X}| |\mathcal{A}| N T}{\delta} \bigg) \sqrt{\log\bigg(\frac{N}{\delta}\bigg)} \\
        &\quad \quad+ \frac{\ell 30 \sqrt{2} N^3}{1-\alpha} |\mathcal{X}|^2 \sqrt{|\mathcal{A}| T}  \log^{3/2}\bigg(\frac{|\mathcal{X}| |\mathcal{A}| N T}{\delta} \bigg) \sqrt{\log\bigg(\frac{N}{\delta}\bigg)} \\
        &=\tilde{O} \bigg(\frac{\ell N^3}{1-\alpha} |\mathcal{X}|^{2} \sqrt{|\mathcal{A}| T} \bigg).
    \end{split}
\end{equation*}
\end{proof}

\subsection{Upper bound on $R_T^{\text{diff. $\rho_t$}}$: proof of Prop.~\ref{prop:diff_rho_rhot}}\label{app:proof_rho_rhot}
\begin{proof}
    Recall that $\|\ell_{t,n}\|_\infty \leq \ell$ as $\ell$ is the Lipschitz constant of $f_{t,n}$ with respect to the norm $\|\cdot \|_1$ for all $n \in [N]$ and $t \in [T]$. Given the definition of $\bar{b}_t$ in Eq.~\eqref{eq:bonus}, we have that for all $t \in [T]$ and $n \in [N]$, $\|\bar{b}_{t,n} \|_\infty \leq N C_\delta \ell$. Hence, from Holder's inequality we have that with high probability
    \begin{equation*}
        \begin{split}
            R_T^{\text{diff, $\rho_t$}} &= \sum_{t=1}^T \langle \ell_t - \bar{b}_t, \hat{\mu}^{ \pi, \rho_t}_t - \hat{\mu}^{\pi, \rho}_t \rangle \\
            &\leq 2 \ell N C_\delta \sum_{t=1}^T \sum_{n=1}^N \|\hat{\mu}_{t,n}^{ \pi, \rho_t} - \hat{\mu}_{t,n}^{ \pi, \rho} \|_1 \\
            &\underbrace{\leq}_{\text{Lemma~\ref{lemma:bound_norm_mu_diff_mu0}}} 2 \ell N C_\delta \sum_{t=1}^T N \|\rho_t - \rho \|_1 \\
            &\underbrace{\leq}_{\text{Lemma~\ref{lemma:almost_equal_dist}}} 2 \ell N^2 C_\delta \frac{ 30 N^2}{1-\alpha} |\mathcal{X}|^{3/2} \sqrt{|\mathcal{A}| T}  \log\bigg(\frac{|\mathcal{X}| |\mathcal{A}| N T}{\delta} \bigg) \sqrt{\log\bigg(\frac{N}{\delta}\bigg)} \\
            &= \tilde{O}\bigg(\frac{1}{1-\alpha} \ell N^4 |\mathcal{X}|^2 \sqrt{|\mathcal{A}| T} \bigg).
        \end{split}
    \end{equation*}
\end{proof}

\subsection{Final result: proof of Thm.~\ref{thm:main_periodic_regret_known_rhot}}
\begin{proof}
    Following the decomposition of the periodic regret in Sec.~\ref{sec:algorithm}, and using the convexity of $F_t$ with $\ell_t := \nabla F_t(\mu_t)$, we have that for all periodic policy $\pi$, 
    \begin{equation*}
        \begin{split}
            R_T(\pi) &\leq \underbrace{\sum_{t=1}^T F_t(\mu^{\pi_t, \rho_t}) - F_t(\hat{\mu}^{\pi_t, \rho_t}_t)}_{R_T^{\text{MDP}}} + \underbrace{\sum_{t=1}^T \langle \ell_t - \bar{b}_t, \mu_t - \hat{\mu}^{\pi, \rho_t}_t \rangle}_{R_T^{\text{MD}}} 
            + \underbrace{\sum_{t=1}^T \langle \ell_t - \bar{b}_t, \hat{\mu}^{\pi, \rho_t}_t - \hat{\mu}^{\pi, \rho}_t \rangle}_{R_T^{\text{diff. }\rho_t}} \\
            &+ \underbrace{\sum_{t=1}^T \langle \bar{b}_t, \mu_t - \hat{\mu}^{\pi, \rho}_t \rangle + \langle \ell_t, \hat{\mu}^{\pi, \rho}_t - \mu^{\pi, \rho} \rangle}_{R_T^{\text{bonus}}} + \underbrace{\gamma \sum_{t=1}^T  \|\rho_t - \rho \|_1}_{R_T^{\text{reg}}}.
        \end{split}
    \end{equation*}

We analyze each term separately:
\begin{itemize}
    \item From Prop.~\ref{prop:R_T_mdp}, we have that with high probability
    \begin{equation*}
        \begin{split}
             R_T^{\text{MDP}} &\leq \ell N^2 \bigg( 3 \sqrt{2} |\mathcal{X}| \sqrt{ |\mathcal{A}| T \log\bigg(\frac{|\mathcal{X}| |\mathcal{A}| N T }{\delta}}\bigg) + 2 |\mathcal{X}| \sqrt{2 T \log\bigg(\frac{N}{\delta}\bigg)} \bigg) \\
             &= \tilde{O}\big(\ell N^2 |\mathcal{X}| \sqrt{|\mathcal{A}| T} \big).
        \end{split}
    \end{equation*}

    \item From Prop.~\ref{prop:bound_md_term}, we have that with high probability
    \[
    R_T^{\text{MD}} \leq  \tilde{O} \bigg( \ell N^3 |\mathcal{X}|^{5/4} |\mathcal{A}|^{1/4} \sqrt{\frac{\Psi}{1 - \alpha}} T^{3/4} \bigg).
    \]

    \item From Prop.~\ref{prop:diff_rho_rhot}, we obtain that with high probability
    \[
   R_T^{\text{diff. $\rho_t$}} \leq \tilde{O}\bigg(\frac{1}{1-\alpha} \ell N^4 |\mathcal{X}|^2 \sqrt{|\mathcal{A}| T} \bigg).
    \]

    \item From Prop.~\ref{prop:R_T_bonus} we have that with high probability,
    \[
    R_T^{\text{bonus}} \leq  \tilde{O} \big(\frac{1}{1-\alpha} \ell N^3 |\mathcal{X}|^{2} \sqrt{|\mathcal{A}| T} \big).
    \]

    \item Finally, from Lemma~\ref{lemma:almost_equal_dist}, 
    \[
    R_T^{\text{reg}} \leq \tilde{O}\bigg(\frac{\gamma}{1-\alpha} N^2 |\mathcal{X}|^{3/2} \sqrt{|\mathcal{A}| T } \bigg).
    \]
\end{itemize}

By replacing all upper bounds in the decomposition of the periodic regret, we conclude that, with high probability, for all periodic policies $\pi$,
\[
R_T(\pi) \leq \tilde{O} \bigg(  \ell N^2 |\mathcal{X}|^{5/4} |\mathcal{A}|^{1/4} \sqrt{\frac{\Psi}{1 - \alpha}} T^{3/4} +  \frac{1}{1-\alpha} \ell N^4 |\mathcal{X}|^{2} \sqrt{|\mathcal{A}| T}   \bigg).
\]
\end{proof}
%%%%%%%%%%%%%%%%%%%%%%%%%%%%%%%%%%%%%%%%%%%%%%%%%%%%%%%%%%%%

\section{Practical Solution Using Dynamic Programming}\label{app:practical_sol}

We show in this section how to solve the optimization problem in Eq.~\eqref{iteration_md_solver} of the main paper that we use to build the main algorithm in practice:
\begin{equation*}
\begin{aligned}
   \mu_{t+1} \in \argmin_{\mu \in \mathcal{M}_{t+1}} \quad &\Big\{ \eta \langle \ell_{t} - \bar{b}_t, \mu \rangle +  D_\psi(\mu, \mu_{t}) \Big\}\\
\textrm{s.t.} \quad  &\| \mu_N - \rho \|_1 \leq \langle \mu, b_t \rangle +  \bar{\alpha} \|\rho_t - \rho\|_1.
\end{aligned}
\end{equation*}

Let \(\lambda \geq 0\) be the Lagrange multiplier corresponding to the inequality constraint. The Lagrangian is defined as
\begin{equation*}
    \begin{split}
        \mathcal{L}(\mu, \lambda) &:= \langle \ell_{t} - \bar{b}_t, \mu \rangle + \frac{1}{\eta} D_\psi(\mu, \mu_{t}) + \lambda [\| \mu_N - \rho \|_1 - \langle \mu, b_t \rangle -  \bar{\alpha} \|\rho_t - \rho\|_1].
    \end{split}
\end{equation*}
We also represent the constraint function as
\[
G(\mu) := \| \mu_N - \rho \|_1 - \langle \mu, b_t \rangle - \bar{\alpha} \|\rho_t - \rho\|_1.
\]

Assume there exists \(\nu\in \mathcal{M}_{t+1}\) that strictly satisfies the constraint, \emph{i.e.}, \( G(\nu) < 0 \) (to ensure that, we can slightly increase the value of \(\alpha\) in practice). Since the problem is convex, strong duality holds, and thus
\begin{equation}\label{eq:consequence_strong_duality}
\begin{aligned}
   \mu_{t+1} \in \argmin_{\mu \in \mathcal{M}_{t+1}} &\quad \Big\{ \eta \langle \ell_{t} - \bar{b}_t, \mu \rangle +  D_\psi(\mu, \mu_{t}) \Big\} \quad := \quad  \max_{\lambda \geq 0} \min_{\mu \in \mathcal{M}_{t+1}} \mathcal{L}(\mu, \lambda).\\
\textrm{s.t.} \quad  & G(\mu) \leq 0
\end{aligned}
\end{equation}

\paragraph{Minimizing the Lagrangian for a given \(\lambda\):} For a fixed \(\lambda \geq 0\), we provide a solution for \(\min_{\mu \in \mathcal{M}_{t+1}} \mathcal{L}(\mu, \lambda)\) when the Bregman divergence is defined as in Eq.~\eqref{gamma:non_standard}. We derive a closed-form solution for the policies using dynamic programming.

Since the following procedure applies to all \( t \in [T] \) and any probability transitions, we omit the episode index and the fact that the optimization is performed on the set of distribution sequences induced by the estimated MDP. Thus, instead of using \( \ell_t, \bar{b}_t, b_t, \mathcal{M}_{t+1}, \mu_t, \pi_t, \hat{\mu}^{ \pi, \rho_{t+1}}_{t+1}, \hat{p}_{t+1}, \rho_{t+1} \), we simplify the notation to \( \ell, \bar{b}, b, \mathcal{M}, \bar{\mu}, \bar{\pi}, \mu^\pi, \bar{p}, \bar{\rho} \).

Let $\ell^{\lambda} := \ell- \bar{b} - \lambda b$. Hence,
\begin{equation}\label{eq:lagrangian_problem_decomposed}
    \begin{split}
        &\min_{\mu \in \mathcal{M}} \mathcal{L}(\mu, \lambda) = \min_{\mu \in \mathcal{M}} \Big\{ \langle \ell^{ \lambda}, \mu \rangle + \frac{1}{\eta} D_\psi(\mu, \bar{\mu}) + \lambda \|\mu_N - \rho \|_1 \Big\} \\
        &=\min_{\mu \in \mathcal{M}} \bigg\{ \sum_{n=1}^N \sum_{x,a} \ell^{\lambda}_n(x,a) \mu_n(x,a) + \frac{1}{\eta} \sum_{n=1}^N \sum_{x,a} \mu_n(x,a) \log\bigg( \frac{\pi_n(a|x)}{\bar{\pi}_n(x,a)} \bigg) + \lambda \sum_{x,a} | \mu_N(x,a) - \rho(x,a) | \bigg\} \\
        &=\min_{\mu \in \mathcal{M}} \bigg\{ \sum_{n=1}^N \sum_{x,a}  \mu_n(x,a) \bigg[\ell^{\lambda}_n(x,a)+ \frac{1}{\eta} \log\bigg( \frac{\pi_n(a|x)}{\bar{\pi}_n(x,a)} \bigg) \bigg] + \lambda \sum_{x,a} | \mu_N(x,a) - \rho(x,a) | \bigg\} \\
        &=\min_{\pi \in \Pi} \bigg\{ \sum_{n=1}^N \sum_{x,a}  \mu^{\pi}_n(x,a) \bigg[\ell^{\lambda}_n(x,a)+ \frac{1}{\eta} \log\bigg( \frac{\pi_n(a|x)}{\bar{\pi}_n(x,a)} \bigg) \bigg] + \lambda \sum_{x,a} | \mu^{\pi}_N(x,a) - \rho(x,a) | \bigg\} \\
        &= \min_{\pi \in \Pi} \bigg\{ \mathbb{E} \bigg[ \sum_{n=1}^N \ell^\lambda_n(x_n,a_n) + \log\bigg(\frac{\pi_n(a_n|x_n)}{\bar{\pi}_n(a_n|x_n)} \bigg) \bigg| \bar{\rho}, \bar{p}, \pi \bigg] + \lambda \sum_{x,a} \Big| \mathbb{E}\big[\mathds{1}_{\{(x_N,a_N) = (x,a)\} } \big| \bar{\rho}, \bar{p}, \pi\big] - \rho(x,a) \Big| \bigg\}.
    \end{split}
\end{equation}

We define the state-action value function sequence $(Q_i)_{i \in [N]}$ where, for $i = N$, for all $(x,a) \in \mathcal{X} \times \mathcal{A}$,
\begin{equation*}
    \begin{split}
        Q^\lambda_N(x,a) &= \ell_N^\lambda(x,a) +  \lambda \sum_{\bar{x}, \bar{a}} \Big| \mathbb{E} \big[\mathds{1}_{\{ (x_N, a_N) = (\bar{x}, \bar{a}) \}} \big| (x_N,a_N) = (x,a) \big] - \rho(\bar{x}, \bar{a}) \Big| \\
        &=  \ell_N^\lambda(x,a)  + \lambda \sum_{\bar{x}, \bar{a}} \Big| \mathds{1}_{\{ (x, a) = (\bar{x}, \bar{a}) \}} - \rho(\bar{x} ,\bar{a} ) \Big| \\
        &=  \ell_N^\lambda(x,a)  + \lambda 2 (1 - \rho(x,a) ),
    \end{split}
\end{equation*}
and for each \( i \in \{1, \ldots, N-1\} \), where we denote \( \pi_{i+1:N} := (\pi_j)_{j \in \{i+1, \ldots, N\}} \),
\begin{equation*}
\begin{split}
    Q^\lambda_i(x,a) &:= \min_{\pi_{i+1:N}}  \bigg\{ \ell^\lambda_i(x,a) + \mathbb{E} \bigg[ \sum_{n=i+1}^N \ell^\lambda_n(x_n,a_n) + \log\bigg(\frac{\pi_n(a_n|x_n)}{\bar{\pi}_n(a_n|x_n)} \bigg) \bigg| (x_i,a_i) = (x,a), \bar{p}, \pi \bigg] \\
      &\quad \quad \quad \quad + \lambda \sum_{\bar{x},\bar{a}} \Big| \mathbb{E}\big[\mathds{1}_{\{(x_N,a_N) = (\bar{x},\bar{a})\} } \big| (x_i,a_i) = (\bar{x},\bar{a}),  \bar{p}, \pi\big] - \rho(\bar{x},\bar{a}) \Big| \bigg\}. \\
\end{split}
\end{equation*}
Note that this state-action value function satisfies
\begin{equation}\label{eq:state_action_func}
\begin{split}
     Q^\lambda_i(x,a) &= \ell^\lambda_i(x,a) + \min_{\pi_{i+1:N}}  \Bigg\{ \sum_{x'} \bar{p}_{i+1}(x'|x,a) \sum_{a'} \pi_{i+1}(a'|x') \Bigg[ \ell^{\lambda}_{i+1}(x',a') + \log\bigg(\frac{\pi_{i+1}(a'|x')}{\bar{\pi}_{i+1}(a'|x')} \bigg) \\
    &\quad \quad \quad \quad + \mathbb{E}\bigg[ \sum_{n=i+2}^N \ell^\lambda_n(x_n,a_n) + \log\bigg(\frac{\pi_n(a_n|x_n)}{\bar{\pi}_n(a_n|x_n)} \bigg) \bigg| (x_{i+1}, a_{i+1}) = (x',a'), \bar{p}, \pi \bigg]\\
    &\quad \quad \quad \quad + \lambda \sum_{\bar{x},\bar{a}}  \Big| \mathbb{E}\big[\mathds{1}_{\{(x_N,a_N) = (\bar{x},\bar{a})\} } \big| (x_{i+1},a_{i+1}) = (x',a'),  \bar{p}, \pi\big] - \rho(\bar{x},\bar{a}) \Big| \Bigg] \Bigg\} \\
    &=\ell^\lambda_i(x,a) +  \min_{\pi_{i+1}} \bigg\{ \sum_{x'} \bar{p}_{i+1}(x'|x,a) \sum_{a'} \pi_{i+1}(a'|x') \bigg[\log\bigg(\frac{\pi_{i+1}(a'|x')}{\bar{\pi}_{i+1}(a'|x')} \bigg) + Q^\lambda_{i+1}(x',a') \bigg] \bigg\}.
\end{split}
\end{equation}
Observe that computing \( \mathbb{E}_{(x,a) \sim \bar{\rho}}[Q^\lambda_0(x,a)] \) is equivalent to solving the Lagrangian minimization for a fixed $\lambda$ given the decomposition in Eq.~\eqref{eq:lagrangian_problem_decomposed}. Hence, our goal is to find the sequence of policies that is used to compute \( \mathbb{E}_{(x,a) \sim \bar{\rho}}[Q_0(x,a)] \). Using the relation satisfied by the state-action value function shown in Eq.~\eqref{eq:state_action_func}, we can compute each element in the sequence of optimal policies, that we denote by $(\pi_n^\lambda)_{n \in [N]}$, separately backwards in time by taking for all $x \in \mathcal{X}$
\[
\pi^\lambda_{i+1}(\cdot|x) = \argmin_{\pi(\cdot|x) \in \Delta_\mathcal{A}} \big\{ \langle \pi(\cdot|x), Q^\lambda_{i+1}(x,\cdot) \rangle + \frac{1}{\eta} \text{KL}\big( \pi(\cdot|x), \bar{\pi}_{i+1}(\cdot|x) \big) \big\},
\]
which is well known to have a closed-form solution (see for example \cite{pmlr-v238-moreno24a}) with for all $(x,a) \in \mathcal{X} \times \mathcal{A}$,
\begin{equation}\label{eq:closed_form}
\pi^\lambda_{i+1}(a|x) = \frac{\bar{\pi}_{i+1}(a|x) \exp(-\eta Q^\lambda_{i+1}(x,a) ) }{\sum_{a'} \bar{\pi}_{i+1}(a'|x) \exp(-\eta Q^\lambda_{i+1}(x,a') )  }.
\end{equation}

Finally, using the recursive relation from Eq.~\eqref{mu_induced_pi}, we can recover the state-action distribution sequence \( \mu^\lambda \) corresponding to \( \pi^\lambda \), which is the solution to the Lagrangian minimization $\min_{\mu \in \mathcal{M}} \mathcal{L}(\mu,\lambda)$ for a fixed Lagrange multiplier $\lambda$.

\paragraph{Maximizing the Lagrangian:} For a given Lagrange multiplier $\lambda > 0$, we previously described how to compute $\mu^\lambda \in \argmin_{\mu \in \mathcal{M}} \mathcal{L}(\mu, \lambda)$. To find the pair $(\mu^{\lambda_*}, \lambda_*) \in \argmin_{\lambda \geq 0} \min_{\mu \in \mathcal{M}} \mathcal{L}(\mu^\lambda, \lambda)$, we adopt the min-max iterative procedure outlined in Algorithm~\ref{alg:min_max}. We begin by initializing the multiplier $\lambda_1 \geq 0$ and proceed iteratively. At each iteration $s$, we compute the exact state-action distribution sequence $\mu^{\lambda_s}$ associated with the current multiplier $\lambda_s$, following the method described above. We then check whether the inequality constraint is satisfied up to a small constant $\varepsilon$, \emph{i.e.}, $G(\mu^{\lambda_s}) \leq \varepsilon$. If it is, we return $\mu^{\lambda_s}$ as the solution to the original optimization problem. Otherwise, we update the multiplier by performing a gradient ascent step with a tunable learning rate $\eta_\lambda$. We omit the analysis for Algorithm~\ref{alg:min_max} as it is not the main focus of this work and it follows from the classic gradient ascent analysis (see \cite{francis}).

\begin{algorithm}
    \caption{Lagrangian min-max approach}\label{alg:min_max}
    \begin{algorithmic}
        \STATE {\bfseries Input:} initial multiplier $\lambda_1$, learning rate $\eta_\lambda$, $\varepsilon > 0$
        \WHILE{$G(\mu^{\lambda_s}) > \varepsilon$}
        \STATE Update Lagrangian multiplier: $\lambda_{s+1} = \lambda_s + \eta_\lambda G(\mu^{\lambda_s})$
        \STATE Compute the associated state-action distribution $\mu^{\lambda_{s+1}}$
        \ENDWHILE
    \end{algorithmic}
\end{algorithm}

\paragraph{Computing $\bar{\alpha}$:} In practice, the algorithm only requires a value $\bar{\alpha}$ that ensures feasibility. From Lemma~\ref{lemma:almost_equal_dist}, we know that Eq.~\eqref{iteration_md_solver} is feasible for any $\bar{\alpha} \geq \alpha$. Therefore, we can construct a grid over the interval $(0,1)$ and search for the smallest value of $\bar{\alpha}$ in this grid for which the problem remains feasible. To test feasibility, the $L_1$ constraint in Eq.~\eqref{iteration_md_solver} can be reformulated as a set of linear inequalities, allowing the use of a solver to efficiently verify feasibility.

\section{Main results unknown $\rho_t$}\label{app:setting2}

In this section, we present the main results for the second framework, where $\rho_t$ is not observed by the learner. The algorithmic scheme is detailed in Algorithm~\ref{alg:main2} in App.~\ref{app:algo_scheme}, with the key differences with respect to the method for the first framework highlighted in blue. 

To establish the regret bounds for this framework, we also introduce Ass.~\ref{ass:ergodicity} below.

\begin{assumption}\label{ass:ergodicity}
    There exists some $0 \leq \alpha < 1$ such that for all policies $\pi \in \Pi$ and distributions $\nu, \nu' \in \Delta_{\mathcal{X} \times \mathcal{A}}$,
    \[
    \|\nu P_\pi - \nu' P_\pi \|_1 \leq \alpha \|\nu - \nu'\|_1.
    \]
\end{assumption}

\subsection{Estimating $\rho_t$}

We build an estimate of the initial state-action distribution at episode $t$ by designating one of the $M$ agents as an agent that can be restarted at the beginning of each episode. We refer to this agent as the \emph{special} agent. Importantly, only this single agent is restarted, unlike in standard episodic RL, where all agents would be reset, making this a weaker assumption. For further discussion, see Sec.~\ref{sec:algorithm_unknown_rhot} in the main paper. Let $\tilde{\rho}_t$ denote the estimate of $\rho_t$ for all $t \in [T]$.

Let $\tilde{\ell}_{t-1} := \nabla F_{t-1}(\hat{\mu}^{ \pi_{t-1}, \tilde{\rho}_{t-1}}_{t-1})$, and let $\widetilde{\mathcal{M}}_{t} := \mathcal{M}_{\tilde{\rho}_{t}}^{\hat{p}_{t}}$, the set of state-action distributions sequences that start at the initial distribution $\tilde{\rho}_{t}$, and that satisfies the dynamics of the MDP induced by the estimated probability kernel $\hat{p}_{t}$. Algorithm~\ref{alg:main2} computes the policy $\pi_{t}$ by running each online mirror descent iteration over the distributions initialized in $\tilde{\rho}_{t}$. Hence, at episode $t$, for some $\eta >0 $ to be tuned later, the learner computes
\begin{equation}\label{eq:iterative_scheme_setting2}
\begin{aligned}
\mu_{t} \in \argmin_{\mu \in \widetilde{\mathcal{M}}_{t}} &\left\{ \eta \langle \tilde{\ell}_{t-1} - \bar{b}_{t-1}, \mu \rangle + D_\psi(\mu, \mu_{t-1}) \right\} \\
\textrm{s.t.} \quad  &\| \mu_N - \rho \| \leq \langle \mu, b_{t-1} \rangle +  \bar{\alpha} \|\tilde{\rho}_{t-1} - \rho\|,
\end{aligned}
\end{equation}
and takes $\pi_t$ as the policy associated to the state-action distribution $\mu_t$ as discussed in Sec.~\ref{sec:learning_problem}.

We denote the trajectory of the restarted agent under policy $\pi_{t}$ by $(\tilde{x}_{t,n}, \tilde{a}_{t,n})_{n \in [N]}$. At the beginning of episode $t$, the restarted agent is restarted at an initial state-action pair sampled from $\tilde{\rho}_t$, \emph{i.e.}, $(\tilde{x}_{t,0}, \tilde{a}_{t,0}) \sim \tilde{\rho}_t$. For each tuple $(n, x, a, x')$, we define the corresponding counts as follows
\begin{align*}
    \widetilde{N}_{t,n}(x,a) = \sum_{s=1}^{t-1} \mathds{1}_{\{\tilde{x}_{s,n} = x, \tilde{a}_{s,n} = a\}}, \quad \quad \widetilde{M}_{t,n}(x'|x,a) = \sum_{s=1}^{t-1} \mathds{1}_{\{\tilde{x}_{s,n+1} = x' ,\tilde{x}_{s,n} = x, \tilde{a}_{s,n} = a\}}.
\end{align*}
We also define another estimate of the transition probability kernel at the end of episode $t-1$ as
\[
\tilde{p}_{t,n+1}(x'|x,a) := \frac{\widetilde{M}_{t,n}(x'|x,a)}{\max\{1, \widetilde{N}_{t,n}(x,a) \}}.
\]

We define the stochastic matrix corresponding to $\tilde{p}_t := (\tilde{p}_{t,n})_{n \in [N]}$ under a policy $\pi$, as in Eq.~\eqref{eq:markov_chain_proba}, and denote it by $\widetilde{P}_{\pi}^t$. We then set $\tilde{\rho}_1 := \rho$, and for each episode $1 < t \leq T$, we recursively define $\tilde{\rho}_t := \tilde{\rho}_{t-1} \widetilde{P}_{\pi_t}^t$.

We state and prove below Prop.~\ref{prop:estimate_rhot}, giving a result on the accuracy of the estimated initial state-action distribution $\tilde{\rho}_t$ computed and used in Algorithm~\ref{alg:main2}.
\begin{proposition*}
Assuming Ass.~\ref{ass:ergodicity} holds for \(0 \leq \alpha < 1\), we obtain that for any $\delta \in (0,1)$, with probability at least $1-2\delta$,
    \[
    \sum_{t=1}^T \|\rho_t - \tilde{\rho}_t \|_1 \leq \frac{1}{1- \alpha} \bigg[3 N |\mathcal{X}| \sqrt{2 |\mathcal{A}| T \log\bigg(\frac{|\mathcal{X}| |\mathcal{A}| N T}{\delta} \bigg)} + 2 N |\mathcal{X}| \sqrt{2 T \log\bigg(\frac{N}{\delta} \bigg)}  \bigg].
    \]
\end{proposition*}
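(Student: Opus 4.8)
The plan is to bound the cumulative error $\sum_{t=1}^T \|\rho_t - \tilde\rho_t\|_1$ by setting up a recursion across episodes, just as in the proof of Lemma~\ref{lemma:almost_equal_dist}, but now tracking the gap between the true propagation $\rho_{t+1} = \rho_t P_{\pi_t}$ (true kernel $p$) and the estimated propagation $\tilde\rho_{t+1} = \tilde\rho_t \widetilde P^t_{\pi_t}$ (estimated kernel $\tilde p_{t+1}$, estimated start $\tilde\rho_t$). First I would write the one-step decomposition
\[
\|\rho_{t+1} - \tilde\rho_{t+1}\|_1 = \|\rho_t P_{\pi_t} - \tilde\rho_t \widetilde P^t_{\pi_t}\|_1 \leq \|\rho_t P_{\pi_t} - \tilde\rho_t P_{\pi_t}\|_1 + \|\tilde\rho_t P_{\pi_t} - \tilde\rho_t \widetilde P^t_{\pi_t}\|_1.
\]
For the first term I invoke Ass.~\ref{ass:ergodicity} (the contraction now holds for \emph{all} policies, not just periodic ones, which is exactly why this framework needs the stronger assumption), giving $\|\rho_t P_{\pi_t} - \tilde\rho_t P_{\pi_t}\|_1 \leq \alpha \|\rho_t - \tilde\rho_t\|_1$. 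For the second term, I rewrite it as $\|\mu^{\pi_t, p, \tilde\rho_t}_N - \mu^{\pi_t, \tilde p_{t+1}, \tilde\rho_t}_N\|_1$ and apply Lemma~\ref{lemma:bound_norm_mu_diff_mu0} (with identical initial distributions $\nu = \nu' = \tilde\rho_t$), which bounds it by $\sum_{i=0}^{N-1} \sum_{x,a} \mu^{\pi_t, p, \tilde\rho_t}_i(x,a)\, \|p_{i+1}(\cdot|x,a) - \tilde p_{t+1,i+1}(\cdot|x,a)\|_1$.

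Unrolling the recursion $e_{t+1} \leq \alpha e_t + \delta_t$ with $e_1 = \|\rho_1 - \tilde\rho_1\|_1 = 0$ (since $\tilde\rho_1 = \rho = \rho_1$) and $\delta_t$ the per-episode kernel-discrepancy term, and then summing over $t$ and swapping the order of summation gives $\sum_{t=1}^T e_t \leq \frac{1}{1-\alpha} \sum_{t=1}^T \delta_t$, using $\sum_{s\geq 0}\alpha^s \leq (1-\alpha)^{-1}$. So the remaining task is to bound $\sum_{t=1}^T \delta_t = \sum_{t=1}^T \sum_{i=0}^{N-1} \sum_{x,a} \mu^{\pi_t, p, \tilde\rho_t}_i(x,a)\, \|p_{i+1}(\cdot|x,a) - \tilde p_{t+1,i+1}(\cdot|x,a)\|_1$.

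Here is the key subtlety, and the main obstacle: the occupancy measures $\mu^{\pi_t, p, \tilde\rho_t}_i$ appearing in the sum are induced by the \emph{estimated} starting distribution $\tilde\rho_t$ under the \emph{true} kernel $p$, while the kernel estimate $\tilde p_{t+1}$ is built from the restarted (``special'') agent's trajectory — and that agent is \emph{restarted from $\tilde\rho_t$ at the beginning of episode $t$}. This matching is precisely why the algorithm restarts the special agent from $\tilde\rho_t$ rather than letting it drift: it ensures the trajectory used to build $\widetilde N_{t,n}, \widetilde M_{t,n}$ is drawn, conditionally on the past, from the same distribution $\mu^{\pi_t, p, \tilde\rho_t}_n$ that weights the bonus. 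Given this, I would apply Prop.~\ref{prop:mdp_martingale} with $\nu_t := \tilde\rho_t$ (which is $\mathcal F_t$-measurable since it's computed from past data), the policy $\pi_t$ (also $\mathcal F_t$-measurable), and $\xi_{t,n}(x,a) := \|p_{n+1}(\cdot|x,a) - \tilde p_{t+1,n+1}(\cdot|x,a)\|_1$, which by Lemma~\ref{lemma:proba_difference} (applied to the restarted agent's counts) satisfies $\xi_{t,n}(x,a) \leq C_\delta / \sqrt{\max\{1, \widetilde N_{t,n}(x,a)\}}$ with high probability, and is bounded by $c = 2$ deterministically. Prop.~\ref{prop:mdp_martingale} then yields, for each $n$, with probability at least $1-\delta$, $\sum_t \langle \mu^{\pi_t, p, \tilde\rho_t}_n, \xi_{t,n}\rangle \leq 3 C_\delta \sqrt{|\mathcal X||\mathcal A|T} + 2|\mathcal X|\sqrt{2T\log(N/\delta)}$; summing over the $N$ values of $i$ (and using $C_\delta = \sqrt{2|\mathcal X|\log(|\mathcal X||\mathcal A|NT/\delta)}$), absorbing a union bound over the concentration event of Lemma~\ref{lemma:proba_difference} (one $\delta$) and the martingale events (another $\delta$), gives $\sum_t \delta_t \leq 3N|\mathcal X|\sqrt{2|\mathcal A|T\log(|\mathcal X||\mathcal A|NT/\delta)} + 2N|\mathcal X|\sqrt{2T\log(N/\delta)}$ with probability at least $1-2\delta$. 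Dividing by $1-\alpha$ delivers the claimed bound. The only genuinely delicate point to get right in the write-up is verifying the $\mathcal F_t$-measurability hypotheses of Prop.~\ref{prop:mdp_martingale} and, relatedly, arguing that the restarted agent's trajectory at episode $t$ is independent of the past given $\mathcal F_t$ — which holds because the restart breaks any dependence on earlier episodes — so that $\mathbb E[\mathds 1_{\{\tilde x_{t,n}=x, \tilde a_{t,n}=a\}} \mid \mathcal F_t] = \mu^{\pi_t, p, \tilde\rho_t}_n(x,a)$ exactly as required.
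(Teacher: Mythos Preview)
Your approach is essentially the paper's: the paper writes the error as an explicit telescope $\rho_t - \tilde\rho_t = \sum_{i=1}^{t-1} \tilde\rho_i(P_{\pi_i} - \widetilde P^i_{\pi_i}) P_{\pi_{i+1:t-1}}$ and applies Ass.~\ref{ass:ergodicity} to the trailing product to extract $\alpha^{t-i-1}$, whereas you unroll the equivalent one-step recursion $e_{t+1} \le \alpha e_t + \delta_t$; both arrive at $\sum_t e_t \le (1-\alpha)^{-1}\sum_t \delta_t$ with the same per-episode kernel-discrepancy term, and both finish by invoking Prop.~\ref{prop:mdp_martingale} on the restarted agent with $\nu_t = \tilde\rho_t$.

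One indexing point that is not merely cosmetic: the paper takes $\tilde\rho_{t+1} = \tilde\rho_t \widetilde P^t_{\pi_t}$ with $\widetilde P^t$ built from $\tilde p_t$ (counts through episode $t-1$), not $\tilde p_{t+1}$ as you wrote. This matters precisely at the step you flagged as delicate: $\xi_{t,n}(x,a) = \|p_{n+1}(\cdot\mid x,a) - \tilde p_{t,n+1}(\cdot\mid x,a)\|_1$ must be $\mathcal F_t$-measurable for Prop.~\ref{prop:mdp_martingale} to apply, which holds for $\tilde p_t$ but fails for $\tilde p_{t+1}$ (since the latter depends on episode $t$'s restarted-agent trajectory). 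With that index corrected, your argument goes through verbatim.
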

\begin{proof}

 For $i < j$, for all $\nu \in \Delta_{\mathcal{X} \times \mathcal{A}}$, let $\nu P_{\pi_{i:j}} := \nu P_{\pi_i} P_{\pi_{i+1}} \ldots P_{\pi_j}$, and $\nu \widetilde{P}_{\pi_{i:j}}^{i:j} := \nu \widetilde{P}^i_{\pi_i} \widetilde{P}^{i+1}_{\pi_{i+1}} \ldots \widetilde{P}^{j}_{\pi_j}$. Hence, note that $\tilde{\rho}_t := \rho \widetilde{P}^{1:t-1}_{\pi_{1:t-1}}$, and $\rho_t = \rho P^{1:t-1}_{\pi_{1:t-1}}$. Thus, we have that for each $t \in [T]$,
\begin{equation*}
    \begin{split}
        \rho_t  -\tilde{\rho}_t &= \rho P_{\pi_{1:t-1}} - \rho \widetilde{P}^{1:t-1}_{\pi_{1:t-1}} \\
        &= \rho ( P_{\pi_1} - \widetilde{P}^1_{\pi_1} ) P_{\pi_{2:t-1}} + \rho \widetilde{P}_{\pi_1}^1 ( P_{\pi_2} - \widetilde{P}^2_{\pi_2} ) P_{\pi_{3:t-1}} + \ldots + \rho \widetilde{P}^{1:t-2}_{\pi_1:t-2} (P_{\pi_{t-1}} - \widetilde{P}^{t-1}_{\pi_{t-1}} ) \\
        &= \sum_{i=1}^{t-1} \rho \widetilde{P}^{1:i-1}_{\pi_{1:i-1}} (P_{\pi_i} - \widetilde{P}^i_{\pi_i} ) P_{\pi_{i+1:t-1}},
    \end{split}
\end{equation*}
where $\widetilde{P}^{1:0}_{\pi_{1:0}} = 1$ and $P_{\pi_{t:t-1}} = 1$. Thus, by taking the $L_1$ norm and summing over $t \in [T]$ we get that
\begin{equation}\label{eq:current_inequality}
\begin{split}
    \sum_{t=1}^T \|  \rho_t  -\tilde{\rho}_t \|_1 &\leq \sum_{t=1}^T \sum_{i=1}^{t-1} \| \rho \widetilde{P}^{1:i-1}_{\pi_{1:i-1}} (P_{\pi_i} - \widetilde{P}^i_{\pi_i} ) P_{\pi_{i+1:t-1}} \|_1 \\
    &\leq \sum_{t=1}^T \sum_{i=1}^{t-1} \alpha^{t-i-1} \|\rho \widetilde{P}^{1:i-1}_{\pi_{1:i-1}} (P_{\pi_i} - \widetilde{P}^i_{\pi_i} )  \|_1 \\
    &= \sum_{t=1}^T \sum_{i=1}^{t-1} \alpha^{t-i-1} \|\tilde{\rho}_i (P_{\pi_i} - \widetilde{P}^i_{\pi_i} )  \|_1 \\
\end{split}
\end{equation}
where for the before to last inequality we use Ass.~\ref{ass:ergodicity}, and for the last equality we use that $\rho \widetilde{P}^{1:i-1}_{\pi_{1:i-1}} = \tilde{\rho}_i$. 

We now express the final $L_1$ norm using the state-action distribution notation. Let $\mu^{\pi, \nu}$ denote the sequence of state-action distributions induced by executing policy $\pi$ in an episode starting from initial distribution $\nu$ under the true MDP, as defined in Eq.~\eqref{mu_induced_pi}. Similarly, let $\tilde{\mu}^{\pi, \nu}_t$ represent the corresponding distribution in the MDP defined by the estimated transition model $\tilde{p}_t$. In particular, we have that $\tilde{\rho}_i P_{\pi_i} = \mu_N^{\pi_i, \tilde{\rho}_i}$ and $\tilde{\rho}_i \widetilde{P}^i_{\pi_i} = \tilde{\mu}_{i,N}^{ \pi_i, \tilde{\rho}_i}$. Therefore, from the inequality above we have that
\begin{equation}\label{eq:decomp_rho_rhotilde}
    \begin{split}
        \sum_{t=1}^T \|  \rho_t  -\tilde{\rho}_t \|_1 &\leq \sum_{t=1}^T \sum_{i=1}^{t-1} \alpha^{t-i-1} \| \mu_N^{\pi_i, \tilde{\rho}_i} - \tilde{\mu}_{i,N}^{ \pi_i, \tilde{\rho}_i} \|_1 \\
        &\underbrace{\leq}_{\text{Lemma~\ref{lemma:bound_norm_mu_diff_mu0}}} \sum_{t=1}^T \sum_{i=1}^{t-1} \alpha^{t-i-1} \sum_{j=0}^{N-1} \sum_{x,a} \mu_j^{\pi_i, \tilde{\rho}_i}(x,a) \|p_{j+1}(\cdot|x,a) - \tilde{p}_{i, j+1}(\cdot|x,a) \|_1 \\
        &= \sum_{t=1}^{T-1} \alpha^{t-1} \sum_{i=1}^{T-t} \sum_{j=0}^{N-1} \sum_{x,a} \mu_j^{\pi_i, \tilde{\rho}_i}(x,a) \|p_{j+1}(\cdot|x,a) - \tilde{p}_{i, j+1}(\cdot|x,a) \|_1 .
    \end{split}
\end{equation}

We apply Prop.~\ref{prop:mdp_martingale} to the trajectory of the restarted agent $(\tilde{x}_{t,n}, \tilde{a}_{t,n})_{n \in [N]}$ when following the policy $\pi_t$ (solution to Eq.~\eqref{eq:iterative_scheme_setting2}), with initial distribution $\nu_t = \tilde{\rho}_t$, and where $\xi_{t,n}(x,a) = \|p_{n+1}(\cdot \mid x,a) - \tilde{p}_{t,n}(\cdot \mid x,a)\|_1 \leq C_\delta/\sqrt{\max\{1, \widetilde{N}_{t,n}(x,a)\}}$. This inequality follows from Lemma~\ref{lemma:proba_difference}, allowing us to set $C = C_\delta$ and $c = 2$. Since the restarted agent is restarted at the beginning of each episode from $\tilde{\rho}_t$, the resulting trajectories are independent across episodes. Therefore, Prop.~\ref{prop:mdp_martingale} yields that
\begin{equation*}
    \begin{split}
        &\sum_{i=1}^{T-t} \sum_{j=0}^{N-1} \sum_{x,a} \mu_j^{\pi_i, \tilde{\rho}_i}(x,a) \|p_{j+1}(\cdot|x,a) - \tilde{p}_{i, j+1}(\cdot|x,a) \|_1\\
        &\quad \quad \leq 3 N |\mathcal{X}| \sqrt{2 |\mathcal{A}| (T-t) \log\bigg(\frac{|\mathcal{X}| |\mathcal{A}| N T}{\delta} \bigg)} + 2 N |\mathcal{X}| \sqrt{2 (T-t) \log\bigg(\frac{N}{\delta} \bigg)} .
    \end{split}
\end{equation*}

Replacing it in the decomposition on Eq.~\eqref{eq:decomp_rho_rhotilde}, we have that
\begin{equation*}
    \begin{split}
        \sum_{t=1}^T \|\rho_t - \tilde{\rho}_t \|_1 &\leq \sum_{t=1}^{T-1} \alpha^{t-1} \bigg[3 N |\mathcal{X}| \sqrt{2 |\mathcal{A}| (T-t) \log\bigg(\frac{|\mathcal{X}| |\mathcal{A}| N T}{\delta} \bigg)} + 2 N |\mathcal{X}| \sqrt{2 (T-t) \log\bigg(\frac{N}{\delta} \bigg)}  \bigg] \\
        &\leq \frac{1}{1- \alpha} \bigg[3 N |\mathcal{X}| \sqrt{2 |\mathcal{A}| T \log\bigg(\frac{|\mathcal{X}| |\mathcal{A}| N T}{\delta} \bigg)} + 2 N |\mathcal{X}| \sqrt{2 T \log\bigg(\frac{N}{\delta} \bigg)}  \bigg],
    \end{split}
\end{equation*}
where for the last inequality we use that $0 \leq \alpha < 1$.

\end{proof}

\subsection{Auxiliary results}

We now present some auxiliary results that are used in establishing the main periodic regret bound for Algorithm~\ref{alg:main2} in Thm.~\ref{thm:main_periodic_regret_unknown_rhot}. These are mainly adaptations of the results from the previous sections to take into account the use of the estimated initial state-action distribution $\tilde{\rho}_t$ rather than the actual distribution $\rho_t$ when calculating the policy at episode $t$. 

\paragraph{New feasibility result:} We begin by presenting a new feasibility result. In Lemma~\ref{lemma:feasibility}, we showed that the problem in Eq.~\eqref{iteration_md_solver} is feasible when the sequence of distributions starts from the true state-action distribution $\rho_t$. However, since the learner now uses the estimated distribution $\tilde{\rho}_t$ to compute the policy, a corresponding feasibility result is required.

\begin{lemma}\label{lemma:new_feasibility}
    There exists $0 \leq \bar{\alpha} < 1$ such that for every episode $t \in [T]$, the optimization problem in Eq.~\eqref{eq:iterative_scheme_setting2} is feasible with high-probability.
\end{lemma}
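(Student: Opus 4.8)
\textbf{Proof proposal for Lemma~\ref{lemma:new_feasibility}.}

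The plan is to mimic the argument of Lemma~\ref{lemma:feasibility}, but with the estimated initial distribution $\tilde{\rho}_t$ playing the role of $\rho_t$, and to show that the choice $\bar{\alpha}=\alpha$ (for $\alpha$ as in Ass.~\ref{ass:ergodicity}) still guarantees feasibility. First I would recall that, by the equivalence between distribution sequences in $\widetilde{\mathcal{M}}_t := \mathcal{M}_{\tilde{\rho}_t}^{\hat{p}_t}$ and policies, the terminal distribution of any feasible point has the form $\mu_N = \tilde{\rho}_t \widehat{P}_\pi^t$ for some policy $\pi$. Then, exactly as before, I would pick the periodic policy $\pi$ whose existence is guaranteed by Ass.~\ref{ass:feasibility}, \emph{i.e.}\ $\rho P_\pi = \rho$, and estimate
\begin{equation*}
\|\tilde{\rho}_t \widehat{P}_\pi^t - \rho\|_1 \le \|\tilde{\rho}_t \widehat{P}_\pi^t - \tilde{\rho}_t P_\pi\|_1 + \|\tilde{\rho}_t P_\pi - \rho\|_1.
\end{equation*}
The first term is bounded by $\langle \hat{\mu}_t^{\pi,\tilde{\rho}_t}, b_t\rangle$ by Lemma~\ref{lemma:mu_bonus} applied with the initial distribution $\nu := \tilde{\rho}_t$ (which is valid because the lemma holds for \emph{any} initial distribution and $\tilde{\rho}_t$ is $\mathcal{F}_t$-measurable, being computed from past trajectories). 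The second term is where I would use the \emph{strengthened} contraction assumption: since $\rho P_\pi = \rho$, writing $\|\tilde{\rho}_t P_\pi - \rho\|_1 = \|\tilde{\rho}_t P_\pi - \rho P_\pi\|_1 \le \alpha\|\tilde{\rho}_t - \rho\|_1$, which requires Ass.~\ref{ass:ergodicity} (the version valid for all policies), not just Ass.~\ref{ass:contraction}, because $\pi$ is periodic for the \emph{true} MDP but we have no a priori reason to control the contraction when iterating from the off-target distribution $\tilde{\rho}_t$ — well, actually $\pi$ is periodic so Ass.~\ref{ass:contraction} would suffice here too, but the analysis of the regret for this framework needs the stronger assumption elsewhere, and it is cleanest to invoke it uniformly. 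Combining the two bounds gives $\|\tilde{\rho}_t \widehat{P}_\pi^t - \rho\|_1 \le \langle \hat{\mu}_t^{\pi,\tilde{\rho}_t}, b_t\rangle + \alpha\|\tilde{\rho}_t - \rho\|_1$, which is precisely the constraint in Eq.~\eqref{eq:iterative_scheme_setting2} with $\bar{\alpha}=\alpha$, exhibiting a feasible point.

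The one genuinely new ingredient, and the step I expect to need the most care, is verifying that Lemma~\ref{lemma:mu_bonus} really does apply with $\nu = \tilde{\rho}_t$: that lemma's statement allows an arbitrary initial distribution $\nu$, but the high-probability event underlying it is the concentration bound of Lemma~\ref{lemma:proba_difference} for the estimated kernel $\hat{p}_t$ (built from the \emph{independent} agent trajectories, which start from $\rho_t$). Since $\hat{p}_t$ is accurate on all state–action pairs with high probability uniformly over $(t,n,x,a)$, the bound $\|\hat{\mu}_t^{\pi,\tilde{\rho}_t}_N - \mu^{\pi,\tilde{\rho}_t}_N\|_1 \le \langle \hat{\mu}_t^{\pi,\tilde{\rho}_t}, b_t\rangle$ still holds regardless of where the counts came from — the bonus $b_t$ only references the counters $N_{t,n}$, and the chain of inequalities via Lemma~\ref{lemma:bound_norm_mu_diff_mu0} goes through verbatim. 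I would state this explicitly to avoid any confusion about which trajectories feed $\hat{p}_t$. Finally, I would note the feasibility holds on the same high-probability event as in Lemma~\ref{lemma:feasibility}, so no additional union bound beyond a factor of $T$ is incurred, and as in the main paper one can in practice search over an exponential grid for the smallest $\bar{\alpha}$ that makes the (reformulated linear) $L_1$ constraint feasible, incurring only a $\log\frac{1}{1-\alpha}$ computational overhead.
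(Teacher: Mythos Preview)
Your proposal is correct and follows essentially the same approach as the paper's proof: pick a periodic policy $\pi$, decompose $\|\tilde{\rho}_t\widehat{P}^t_\pi-\rho\|_1$ via the triangle inequality into a model-error term bounded by $\langle\hat{\mu}_t^{\pi,\tilde{\rho}_t},b_t\rangle$ (Lemma~\ref{lemma:mu_bonus}) and a contraction term bounded by $\alpha\|\tilde{\rho}_t-\rho\|_1$ using $\rho P_\pi=\rho$. You are also right that Ass.~\ref{ass:contraction} suffices here since $\pi$ is periodic---this is exactly what the paper invokes---and your additional care in explaining why Lemma~\ref{lemma:mu_bonus} applies with the estimated initial distribution $\tilde{\rho}_t$ is a welcome clarification the paper leaves implicit.
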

\begin{proof}
Let $\pi$ be any periodic policy. Hence,
\begin{equation*}
\begin{split}
    \| \tilde{\rho}_t \hat{P}^t_\pi - \rho \|_1 &\leq \|\tilde{\rho}_t (\hat{P}^t_\pi - P_\pi) \|_1 + \| (\tilde{\rho}_t - \rho) P_\pi \|_1 \\
    &\leq \langle \hat{\mu}^{\pi, \tilde{\rho}_t}_t, b_t \rangle + \alpha \|\tilde{\rho}_t - \rho \|_1,
    \end{split}
\end{equation*}
where we use the high-probability result from Lemma~\ref{lemma:mu_bonus} and Ass.~\ref{ass:contraction}.
\end{proof}

\begin{corollary}\label{cor:bonus_estimate_rhot}
    Let $(b_t)_{t \in [T]}$ and $(\bar{b}_t)_{t \in [T]}$ be the two sequences of bonus vectors defined in Eq.~\eqref{eq:bonus}. Recall that $\hat{\mu}^{\pi, \nu}_t$ is the state-action distribution sequence induced in the estimated MDP $\hat{p}_t$, with policy $\pi$, with initial distribution $\nu$. Thus, for any $\delta \in (0,1)$, with probability at least $1-4\delta$,
    \[
    \sum_{t=1}^T \langle b_t, \hat{\mu}^{\pi_t, \tilde{\rho}_t}_t \rangle \leq \tilde{O}\bigg(\frac{ N^2}{1-\alpha} |\mathcal{X}|^{3/2} \sqrt{|\mathcal{A}| T}\bigg) \quad \quad \text{and} \quad \quad  \sum_{t=1}^T \langle \bar{b}_t, \hat{\mu}^{\pi_t, \tilde{\rho}_t}_t \rangle \leq   \tilde{O}\bigg(\frac{ \ell N^3}{1-\alpha} |\mathcal{X}|^{3/2} \sqrt{|\mathcal{A}| T}\bigg).
    \]
\end{corollary}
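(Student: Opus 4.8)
\textbf{Proof proposal for Corollary~\ref{cor:bonus_estimate_rhot}.}
The plan is to mimic the proof of Corollary~\ref{cor:bonus_analysis}, but replacing the true initial distribution $\rho_t$ by its estimate $\tilde{\rho}_t$ everywhere, and paying the price of a triangle-inequality detour through $\rho_t$ to control the extra discrepancy term $\|\rho_t-\tilde{\rho}_t\|_1$ via Prop.~\ref{prop:estimate_rhot}. Concretely, for the term $\sum_t \langle b_t,\hat{\mu}^{\pi_t,\tilde{\rho}_t}_t\rangle$ I would first split
\[
\sum_{t=1}^T \langle b_t,\hat{\mu}^{\pi_t,\tilde{\rho}_t}_t\rangle
= \underbrace{\sum_{t=1}^T \langle b_t,\hat{\mu}^{\pi_t,\tilde{\rho}_t}_t - \hat{\mu}^{\pi_t,\tilde{\rho}_t}_t \text{(true $p$)}\rangle}_{\text{estimation-of-}p}
+ \underbrace{\sum_{t=1}^T \langle b_t, \mu^{\pi_t,\tilde{\rho}_t}\rangle}_{\text{main term}},
\]
where the first piece is bounded exactly as term $(i)$ in Corollary~\ref{cor:bonus_analysis} using Lemma~\ref{lemma:bound_norm_mu_diff_mu0} and then Prop.~\ref{prop:mdp_martingale} applied to the non-restarted agents' trajectories (which are independent across episodes and initialized from the true $\rho_t$ — so here I must be careful, see below). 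For the main term I would apply one more triangle inequality: $\mu^{\pi_t,\tilde{\rho}_t} = \mu^{\pi_t,\rho_t} + (\mu^{\pi_t,\tilde{\rho}_t}-\mu^{\pi_t,\rho_t})$, bound $\sum_t\langle b_t,\mu^{\pi_t,\rho_t}\rangle$ by Corollary~\ref{cor:bonus_analysis} ($\tilde{O}(N|\mathcal{X}|^{3/2}\sqrt{|\mathcal{A}|T})$), and bound the correction by $\|b_{t,n}\|_\infty\le C_\delta$ together with Lemma~\ref{lemma:bound_norm_mu_diff_mu0} (which gives $\|\mu_n^{\pi_t,\tilde{\rho}_t}-\mu_n^{\pi_t,\rho_t}\|_1\le\|\tilde{\rho}_t-\rho_t\|_1$ since the kernel is the same) and then Prop.~\ref{prop:estimate_rhot}, yielding the $(1-\alpha)^{-1}$-inflated bound $\tilde{O}\big(\tfrac{N^2}{1-\alpha}|\mathcal{X}|^{3/2}\sqrt{|\mathcal{A}|T}\big)$. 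The $\bar{b}_t$ version is identical with the extra factor $\ell N$ from $\|\bar{b}_{t,n}\|_\infty\le \ell N C_\delta$, giving $\tilde{O}\big(\tfrac{\ell N^3}{1-\alpha}|\mathcal{X}|^{3/2}\sqrt{|\mathcal{A}|T}\big)$. Summing the probabilities: two failure events of probability $\delta$ from the concentration steps inside the $p$-estimation part, plus the $2\delta$ from Prop.~\ref{prop:estimate_rhot}, gives the stated $1-4\delta$.

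The point I expect to be the main obstacle — and the one needing the most care — is the application of the martingale machinery of Prop.~\ref{prop:mdp_martingale}. That proposition requires the occupancy measure $\mu^{\pi_t,\nu_t}$ whose expectation matches the indicator of the observed trajectory to be $\mathcal{F}_t$-measurable, where the trajectory used is the one actually observed. In Framework~2 the non-restarted agents are initialized from the \emph{true} $\rho_t$, not $\tilde{\rho}_t$, so the "$p$-estimation" piece must be phrased using $\mu^{\pi_t,\rho_t}$ in the martingale argument and only afterwards converted to $\hat{\mu}^{\pi_t,\tilde{\rho}_t}_t$ via Lemma~\ref{lemma:bound_norm_mu_diff_mu0}; conversely, the estimate $\hat{p}_t$ appearing in $\hat{\mu}^{\pi_t,\tilde{\rho}_t}_t$ is built from those same trajectories and is $\mathcal{F}_t$-measurable, so that is fine. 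In other words, I will route everything through the true-$\rho_t$ occupancy measures for the concentration steps, and absorb the gap between $\rho_t$ and $\tilde{\rho}_t$ as an additive Lipschitz error controlled by Prop.~\ref{prop:estimate_rhot} — this is exactly where the extra $(1-\alpha)^{-1}$ (and consequently the $N^2$ and $\ell N^3$ rather than $N$ and $\ell N^2$ scaling) enters, since $\sum_t\|\rho_t-\tilde{\rho}_t\|_1$ itself carries a $(1-\alpha)^{-1}N|\mathcal{X}|$ factor.

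Finally, I would double-check the bookkeeping of logarithmic and polynomial-in-$N$ factors: the $p$-estimation piece already contributes an $N^2|\mathcal{X}|^{3/2}\sqrt{|\mathcal{A}|T}$ term (it involves a double sum over $n$ and $i$ plus the $C_\delta\sim|\mathcal{X}|^{1/2}$ constant), which dominates the $N|\mathcal{X}|^{3/2}\sqrt{|\mathcal{A}|T}$ main-term contribution from Corollary~\ref{cor:bonus_analysis}, and the $\rho_t$-versus-$\tilde{\rho}_t$ correction contributes $C_\delta\cdot N\cdot\sum_t\|\rho_t-\tilde{\rho}_t\|_1 = \tilde{O}\big(\tfrac{N^2}{1-\alpha}|\mathcal{X}|^{3/2}\sqrt{|\mathcal{A}|T}\big)$ after plugging in Prop.~\ref{prop:estimate_rhot} (whose bound is $\tfrac{1}{1-\alpha}\tilde{O}(N|\mathcal{X}|\sqrt{|\mathcal{A}|T})$) and the factor $\sqrt{|\mathcal{X}|}$ from $C_\delta$. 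So all three contributions are $O\big(\tfrac{N^2}{1-\alpha}|\mathcal{X}|^{3/2}\sqrt{|\mathcal{A}|T}\big)$ up to log factors, giving the claimed bound; the $\bar b$ case only rescales by $\ell N$ throughout.
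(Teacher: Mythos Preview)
Your high-level strategy is right—route through the true initial distribution $\rho_t$ and pay for the gap via Prop.~\ref{prop:estimate_rhot}—but your choice of pivot costs you a factor of $N$ that your bookkeeping paragraph misses. You split $\hat{\mu}^{\pi_t,\tilde{\rho}_t}_t = (\hat{\mu}^{\pi_t,\tilde{\rho}_t}_t-\mu^{\pi_t,\tilde{\rho}_t})+\mu^{\pi_t,\tilde{\rho}_t}$, so your ``estimation-of-$p$'' piece, after Lemma~\ref{lemma:bound_norm_mu_diff_mu0}, carries a $\mu^{\pi_t,\tilde{\rho}_t}$-weighted sum $\sum_t\sum_n\sum_{i<n}\sum_{x,a}\mu_i^{\pi_t,\tilde{\rho}_t}(x,a)\|p_{i+1}-\hat p_{t,i+1}\|_1$. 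You correctly note that Prop.~\ref{prop:mdp_martingale} (with the non-restarted trajectories and the counts defining $\hat p_t$) needs $\nu_t=\rho_t$, not $\tilde\rho_t$; but when you then ``route'' $\mu^{\pi_t,\tilde\rho_t}\to\mu^{\pi_t,\rho_t}$ inside this piece, the residual is bounded by $2\|\tilde\rho_t-\rho_t\|_1$ for \emph{each} pair $(n,i)$, giving $2C_\delta N^2\sum_t\|\tilde\rho_t-\rho_t\|_1=\tilde O\big(\tfrac{N^3}{1-\alpha}|\mathcal X|^{3/2}\sqrt{|\mathcal A|T}\big)$ after Prop.~\ref{prop:estimate_rhot}. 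Your final accounting only lists the single-$N$ correction $C_\delta N\sum_t\|\rho_t-\tilde\rho_t\|_1$ coming from the main-term triangle inequality, not this second one sitting inside the $p$-estimation piece.

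The paper avoids this by pivoting at $\hat\mu^{\pi_t,\rho_t}_t$ instead of $\mu^{\pi_t,\tilde\rho_t}$: write
\[
\sum_{t=1}^T\langle b_t,\hat\mu^{\pi_t,\tilde\rho_t}_t\rangle
=\sum_{t=1}^T\langle b_t,\hat\mu^{\pi_t,\tilde\rho_t}_t-\hat\mu^{\pi_t,\rho_t}_t\rangle
+\sum_{t=1}^T\langle b_t,\hat\mu^{\pi_t,\rho_t}_t\rangle.
\]
The first sum involves two occupancy measures with the \emph{same} kernel $\hat p_t$, so Lemma~\ref{lemma:bound_norm_mu_diff_mu0} gives only $\|\tilde\rho_t-\rho_t\|_1$ with no $(n,i)$-double sum, and H\"older plus Prop.~\ref{prop:estimate_rhot} yield $C_\delta N\sum_t\|\tilde\rho_t-\rho_t\|_1=\tilde O\big(\tfrac{N^2}{1-\alpha}|\mathcal X|^{3/2}\sqrt{|\mathcal A|T}\big)$. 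The second sum is handled in one shot by Corollary~\ref{cor:bonus_analysis}, which already packages the $p$-estimation error with the correct $\rho_t$-based martingale. Changing your pivot in this way removes the extra $N$ and recovers the stated bound; the $\bar b_t$ case then follows identically with the $\ell N$ rescaling.
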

\begin{proof}
    We start by decomposing the product as
    \begin{equation*}
        \begin{split}
            \sum_{t=1}^T \langle b_t, \hat{\mu}^{\pi_t, \tilde{\rho}_t}_t \rangle = \underbrace{\sum_{t=1}^T \langle b_t, \hat{\mu}^{\pi_t, \tilde{\rho}_t}_t - \hat{\mu}^{\pi_t, \rho_t}_t \rangle}_{(i)} + \underbrace{\sum_{t=1}^T \langle b_t, \hat{\mu}^{\pi_t, \rho_t}_t \rangle}_{(ii)}. 
        \end{split}
    \end{equation*}
From Corollary~\ref{cor:bonus_analysis}, we have that for any $\delta \in (0,1)$, with probability $1-2 \delta$, $(ii) = \tilde{O}(N^2 |\mathcal{X}|^{3/2}\sqrt{|\mathcal{A}|T}).$ As for the first term, using Holder's inequality and that for each $n \in [N]$, $\|b_{t,n}\|_\infty \leq  C_\delta$, we get that
\begin{equation*}
    \begin{split}
        (i) &\leq C_\delta \sum_{t=1}^T \sum_{n=1}^N \| \hat{\mu}^{\pi_t, \tilde{\rho}_t}_{t,n} - \hat{\mu}^{\pi_t, \rho_t}_{t,n}  \|_1 \\
        &\underbrace{\leq}_{\text{Lemma~\ref{lemma:bound_norm_mu_diff_mu0}}} C_\delta N \sum_{t=1}^T \|\tilde{\rho}_t - \rho_t \|_1 \\
        &\underbrace{\leq}_{\text{Prop.~\ref{prop:estimate_rhot}}}  \tilde{O}\bigg(\frac{ N^2}{1-\alpha} |\mathcal{X}|^{3/2} \sqrt{|\mathcal{A}| T}\bigg).
    \end{split}
\end{equation*}

By summing both terms, we conclude the proof for the bonus vector sequence $(b_t)_{t \in [T]}$.

The result for the bonus vector sequence $(\bar{b}_t)_{t \in [T]}$ follows from a similar decomposition, the application of Corollary~\ref{cor:bonus_analysis}, and the observation that for each $n \in [N]$, $\|\bar{b}_{t,n}\|_\infty \leq \ell N C_\delta$.
\end{proof}

\paragraph{Distance between $\tilde{\rho}_t$ and $\rho$:} We now present a result analogous to Lemma~\ref{lemma:almost_equal_dist}, which shows that the estimated initial state-action distribution remains, on average, close to the target distribution $\rho$. This result is crucial for establishing a low periodic regret bound.
\begin{lemma}\label{lemma:diff_estimate_rhot_true_rho}
With high probability we have that
    \[
    \sum_{t=1}^T \|\tilde{\rho}_{t+1} - \rho \|_1 \leq \tilde{O}\bigg(\frac{ N^2}{(1- \alpha)^2} |\mathcal{X}|^{3/2} \sqrt{|\mathcal{A}| T} \bigg).
    \]
\end{lemma}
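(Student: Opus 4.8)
\textbf{Proof plan for Lemma~\ref{lemma:diff_estimate_rhot_true_rho}.}
The approach mirrors the proof of Lemma~\ref{lemma:almost_equal_dist}, but now we must track the estimated distribution $\tilde\rho_t$, which is updated by the learner using the estimated kernel $\tilde p_t$, while the constraint in Eq.~\eqref{eq:iterative_scheme_setting2} controls the quantity $\|\mu_N - \rho\|_1 \le \langle \mu, b_{t-1}\rangle + \bar\alpha\|\tilde\rho_{t-1} - \rho\|_1$ with the \emph{estimated} kernel $\hat p_t$. The key observation is that the iterate $\mu_t$ solving Eq.~\eqref{eq:iterative_scheme_setting2} satisfies $\mu_{t,N} = \tilde\rho_t \hat P_{\pi_t}^t$, whereas the quantity of interest is $\tilde\rho_{t+1} = \tilde\rho_t \widetilde P_{\pi_t}^t$. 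First I would decompose
\[
\|\tilde\rho_{t+1} - \rho\|_1 \le \underbrace{\|\tilde\rho_t \widetilde P_{\pi_t}^t - \tilde\rho_t \hat P_{\pi_t}^t\|_1}_{\text{(A): two estimated kernels}} + \underbrace{\|\tilde\rho_t \hat P_{\pi_t}^t - \rho\|_1}_{\text{(B): controlled by the constraint}}.
\]
Term (B) is bounded by $\langle \hat\mu_t^{\pi_t,\tilde\rho_t}, b_t\rangle + \bar\alpha\|\tilde\rho_t - \rho\|_1$ using the feasibility result of Lemma~\ref{lemma:new_feasibility} (the solution $\mu_t$ lies in the constraint set). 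Term (A) I would bound via Lemma~\ref{lemma:bound_norm_mu_diff_mu0} applied to the two transition kernels $\hat p_t$ and $\tilde p_t$, writing it as $\sum_{i=0}^{N-1}\sum_{x,a} \hat\mu_{t,i}^{\pi_t,\tilde\rho_t}(x,a)\,\|\hat p_{t,i+1}(\cdot|x,a) - \tilde p_{t,i+1}(\cdot|x,a)\|_1$, and then bounding each kernel difference by $\|\hat p_{t,i+1} - p_{i+1}\|_1 + \|p_{i+1} - \tilde p_{t,i+1}\|_1 \le 2C_\delta/\sqrt{\max\{1, \cdot\}} + 2C_\delta/\sqrt{\max\{1, \widetilde N_{t,i}(x,a)\}}$ via Lemma~\ref{lemma:proba_difference} (both counts concentrate).

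Next I would unroll the recursion: writing $\rho_1 = \tilde\rho_1 = \rho$ and iterating $\|\tilde\rho_{t+1} - \rho\|_1 \le \text{(A)}_t + \langle \hat\mu_t^{\pi_t,\tilde\rho_t}, b_t\rangle + \bar\alpha\|\tilde\rho_t - \rho\|_1$ down to the first episode gives
\[
\|\tilde\rho_{t+1} - \rho\|_1 \le \sum_{s=0}^{t-1} \bar\alpha^s\big(\text{(A)}_{t-s} + \langle \hat\mu_{t-s}^{\pi_{t-s},\tilde\rho_{t-s}}, b_{t-s}\rangle\big).
\]
Summing over $t\in[T]$ and exchanging the order of summation yields a geometric factor $\sum_{s\ge 0}\bar\alpha^s \le (1-\alpha)^{-1}$ multiplying $\sum_{s=1}^T\big(\text{(A)}_s + \langle \hat\mu_s^{\pi_s,\tilde\rho_s}, b_s\rangle\big)$. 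The bonus sum $\sum_s \langle \hat\mu_s^{\pi_s,\tilde\rho_s}, b_s\rangle$ is handled by Corollary~\ref{cor:bonus_estimate_rhot}, which already contributes a $(1-\alpha)^{-1}$ factor, giving overall $(1-\alpha)^{-2}$. For the sum $\sum_s \text{(A)}_s$ I would invoke Prop.~\ref{prop:mdp_martingale} twice: once with the trajectories of the non-reset agents (independent across episodes, initialized from $\rho_t$ — but here we need it applied carefully, since $\hat\mu_{s,i}^{\pi_s,\tilde\rho_s}$ is initialized from $\tilde\rho_s$, not $\rho_s$; an intermediate step bounding $\hat\mu_{s,i}^{\pi_s,\tilde\rho_s}$ against $\mu_i^{\pi_s,\rho_s}$ via Lemma~\ref{lemma:bound_norm_mu_diff_mu0} and Prop.~\ref{prop:estimate_rhot} absorbs the discrepancy), and once with the reset agent's trajectories (independent across episodes by construction) for the $\tilde p_t$-error term, using the bound from Prop.~\ref{prop:estimate_rhot}. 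Each of these contributes $\tilde O(N^2|\mathcal{X}|^{3/2}\sqrt{|\mathcal{A}|T})$ or smaller, possibly with an extra $(1-\alpha)^{-1}$ from Prop.~\ref{prop:estimate_rhot}.

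Collecting terms, the dominant contribution is $(1-\alpha)^{-1}\cdot(1-\alpha)^{-1}\cdot\tilde O(N^2|\mathcal{X}|^{3/2}\sqrt{|\mathcal{A}|T})$, yielding the claimed $\tilde O\big(\tfrac{N^2}{(1-\alpha)^2}|\mathcal{X}|^{3/2}\sqrt{|\mathcal{A}|T}\big)$. The main obstacle I anticipate is keeping the $(1-\alpha)$ bookkeeping tight: there are three potential sources of a $(1-\alpha)^{-1}$ factor — the geometric unrolling of the constraint recursion, the bonus bound in Corollary~\ref{cor:bonus_estimate_rhot}, and the $\rho_t$-vs-$\tilde\rho_t$ discrepancy from Prop.~\ref{prop:estimate_rhot} — and I must verify that only two of them stack multiplicatively (the recursion factor and whichever dominates between the other two), rather than all three, so that the exponent is $2$ and not $3$. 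A careful accounting should show that the Prop.~\ref{prop:estimate_rhot} discrepancy enters additively inside the already-$(1-\alpha)^{-1}$-scaled bonus/error sum rather than multiplying it again, which is what gives the stated bound.
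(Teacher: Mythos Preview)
Your approach is essentially the same as the paper's, with one cosmetic difference in the initial decomposition. The paper routes through the \emph{true} kernel $P_{\pi_t}$ via a three-term triangle inequality
\[
\|\tilde\rho_{t+1}-\rho\|_1 \le \|\tilde\rho_t(\widetilde P_{\pi_t}^t - P_{\pi_t})\|_1 + \|\tilde\rho_t(\hat P_{\pi_t}^t - P_{\pi_t})\|_1 + \|\tilde\rho_t\hat P_{\pi_t}^t - \rho\|_1,
\]
which lets Lemma~\ref{lemma:mu_bonus} be applied separately to each kernel error, producing $\beta_t := \langle\tilde\mu_t,\tilde b_t\rangle + 2\langle\mu_t,b_t\rangle$ with the ``right'' weighting measure on each bonus (the $\tilde p_t$-error is weighted by $\tilde\mu_t$, so Prop.~\ref{prop:mdp_martingale} applies directly with the reset agent's trajectories). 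Your two-term split followed by the kernel-level triangle $\|\hat p_t - \tilde p_t\|_1 \le \|\hat p_t - p\|_1 + \|p - \tilde p_t\|_1$ arrives at the same place but leaves the $\tilde b_t$ term weighted by $\hat\mu_t^{\pi_t,\tilde\rho_t}$ instead of $\tilde\mu_t$ or $\mu^{\pi_t,\tilde\rho_t}$, which is why you need the extra correction step you describe. That correction works, but the paper's decomposition avoids it.

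On the $(1-\alpha)$ bookkeeping: your three-source worry is one too many. Corollary~\ref{cor:bonus_estimate_rhot}'s $(1-\alpha)^{-1}$ \emph{is} the Prop.~\ref{prop:estimate_rhot} factor --- the corollary invokes Prop.~\ref{prop:estimate_rhot} internally to pass from $\tilde\rho_t$ to $\rho_t$. So there are exactly two independent sources (the geometric unrolling and Corollary~\ref{cor:bonus_estimate_rhot}), and they multiply to give $(1-\alpha)^{-2}$; nothing needs to enter ``additively rather than multiplicatively.''
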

\begin{proof}
We denote by $\tilde{\mu}^{\pi, \nu}_t$ the sequence of state-action distributions induced by policy $\pi$ in the estimated MDP starting from $\nu$, according to the recursive relation in Eq.~\eqref{mu_induced_pi}, with initial state-action distribution $\nu$.

For short, we let $\tilde{\mu}_t := \tilde{\mu}^{ \pi_t, \tilde{\rho}_t}_t$, and $\mu_t := \hat{\mu}^{\pi_t, \tilde{\rho}_t}_t$. Additionally, we define a bonus vector based on the trajectory of the restarted agent; that is, for all $(n, x, a)$, we set:
\[
\tilde{b}_{t,n}(x,a) := \frac{C_\delta}{\sqrt{\max\{\widetilde{N}_{t,n}(x,a),1}\}}.
\]

Note that by definition, $\tilde{\rho}_{t+1} = \tilde{\rho}_t \widetilde{P}^t_{\pi_t}$, hence
    \begin{equation*}
        \begin{split}
            \|\tilde{\rho}_{t+1} - \rho \|_1 &= \| \tilde{\rho}_t \widetilde{P}^t_{\pi_t} - \rho \|_1 \\
            &\leq \|\tilde{\rho}_t ( \widetilde{P}^t_{\pi_t} - P_{\pi_t} ) \|_1 +  \|\tilde{\rho}_t ( \hat{P}_{\pi_t}^t - P_{\pi_t} ) \|_1 +  \|\tilde{\rho}_t \hat{P}^t_{\pi_t} - \rho \|_1 \\
            &\leq \underbrace{\langle \tilde{\mu}_{t}, \tilde{b}_t \rangle + 2 \langle \mu_t, b_t \rangle}_{:= \beta_t} + \alpha \|\tilde{\rho}_t - \rho \|_1 \\
            &\leq \beta_t + \alpha \big[ \beta_{t-1} + \alpha \|\tilde{\rho}_{t-1} - \rho \|_1 ] \\ 
            &\leq \sum_{s=1}^{t} \alpha^{t-s} \beta_s,
        \end{split}
    \end{equation*}
where for the second inequality we use that $\pi_t$ is a solution of the OMD iteration at episode $t$ in Eq.~\eqref{eq:iterative_scheme_setting2}, therefore satisfying the constraint $\|\tilde{\rho}_t \hat{P}^t_{\pi_t} - \rho \|_1 \leq \langle \mu_t, b_t \rangle + \alpha \|\tilde{\rho}_t - \rho \|_1$, and we also use Lemma~\ref{lemma:mu_bonus} for both the stochastic matrices $\hat{P}^t_{\pi_t}$ and $\widetilde{P}_{\pi_t}^t$. 

     For any $s \in [T]$, 
    \[
     \sum_{t=1}^s \beta_s = \underbrace{\sum_{t=1}^s \langle \tilde{\mu}_{t}, \tilde{b}_t \rangle}_{(i)}  + \underbrace{\sum_{t=1}^s 2 \langle \mu_t, b_t \rangle}_{(ii)}.
    \]
    We analyze each term individually.

    Term $(ii)$ can be upper bounded via a direct application of Corollary~\ref{cor:bonus_estimate_rhot}, which shows that
    \[
    (ii) = \sum_{t=1}^T \langle \mu_t, b_t \rangle \leq \tilde{O}\bigg(\frac{ N^2}{1- \alpha} |\mathcal{X}|^{3/2} \sqrt{|\mathcal{A}| s} \bigg).
    \]

    For term $(i)$, we apply Prop.~\ref{prop:mdp_martingale} to the trajectory of the restarted agent under policy $\pi_t$, \emph{i.e.}, $(\tilde{x}_{t,n}, \tilde{a}_{t,n})_{n \in [N]}$, using the sequence of initial state-action distributions $(\tilde{\rho}_t)_{t \in [T]}$ from which the restarted agent is reset at the beginning of each episode. We set $\xi_{t,n}(x,a) = \tilde{b}_{t,n}(x,a)$ for all $(n, x, a)$, and therefore take $C = c = C_\delta$. As a result, for any $\delta \in (0,1)$, we obtain that with probability at least $1 - \delta$,
    \begin{equation*}
        \begin{split}
            (i) = \sum_{t=1}^s \langle \tilde{\mu}_{t}, \tilde{b}_t \rangle &\leq 3 N C_\delta \sqrt{|\mathcal{X}| |\mathcal{A}| s } + N C_\delta |\mathcal{X}| \sqrt{2 s \log\bigg(\frac{N}{\delta}\bigg)} \\        
            &= 3 N |\mathcal{X}| \sqrt{ |\mathcal{A}| s \log\bigg( \frac{|\mathcal{X}| |\mathcal{A}| N s }{\delta} \bigg)} + N |\mathcal{X}|^{3/2}  \sqrt{2 s \log\bigg( \frac{|\mathcal{X}| |\mathcal{A}| N s }{\delta} \log\bigg(\frac{N}{\delta}\bigg)} \\
            &= \tilde{O}\big(N |\mathcal{X}|^{3/2} \sqrt{|\mathcal{A}| s} \big),
        \end{split}
    \end{equation*}
    where we use the definition of $C_\delta$ in Eq.~\eqref{eq:c_delta}.

    Joining the upper bounds on terms $(i)$ and $(ii)$ we obtain that
    \[
    \sum_{t=1}^s \beta_s \leq \tilde{O}\bigg(\frac{ N^2}{1- \alpha} |\mathcal{X}|^{3/2} \sqrt{|\mathcal{A}| s} \bigg).
    \]

    Summing over $t \in [T]$ we then get that
    \begin{equation*}
        \begin{split}
            \sum_{t=1}^T \|\tilde{\rho}_{t+1} - \rho \|_1 &\leq \sum_{t=1}^{T} \sum_{s=1}^t \alpha^{t-s} \beta_s \\
            &= \sum_{t=1}^{T-1} \alpha^{t-1} \sum_{s=1}^{T-t} \beta_s \\
            &\leq \sum_{t=1}^{T-1} \alpha^{t-1} \tilde{O}\bigg(\frac{ N^2}{1- \alpha} |\mathcal{X}|^{3/2} \sqrt{|\mathcal{A}| (T-t)} \bigg) \\
            &\leq \tilde{O}\bigg(\frac{N^2}{(1- \alpha)^2} |\mathcal{X}|^{3/2} \sqrt{|\mathcal{A}| T} \bigg),
        \end{split}
    \end{equation*}
    concluding the proof.
\end{proof}

As a consequence of Lemma~\ref{lemma:diff_estimate_rhot_true_rho}, Corollary~\ref{cor:diff_rho_rhot} establishes that the true state-action distribution $\rho_t$ remains, on average, close to the target distribution $\rho$, despite the fact that in Framework 2, we do not observe $\rho_t$ directly and instead rely on the estimate $\tilde{\rho}_t$ to compute the policies.

\begin{corollary}\label{cor:diff_rho_rhot}
With high probability, we have that
    \[
    \sum_{t=1}^T \|\rho_{t} - \rho \|_1 \leq  \tilde{O}\bigg(\frac{N^2}{(1-\alpha)^2} |\mathcal{X}|^{3/2} \sqrt{|\mathcal{A}| T} \bigg).
    \]
\end{corollary}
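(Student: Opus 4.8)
\textbf{Proof plan for Corollary~\ref{cor:diff_rho_rhot}.} The statement is an immediate consequence of Lemma~\ref{lemma:diff_estimate_rhot_true_rho} (the estimated distribution $\tilde\rho_t$ stays close to $\rho$ on average) combined with Proposition~\ref{prop:estimate_rhot} (the estimate $\tilde\rho_t$ stays close to the true $\rho_t$ on average). The plan is to write, for each episode $t$, the triangle inequality
\[
\|\rho_t - \rho\|_1 \le \|\rho_t - \tilde\rho_t\|_1 + \|\tilde\rho_t - \rho\|_1,
\]
and then sum over $t \in [T]$. The first sum on the right is controlled by Proposition~\ref{prop:estimate_rhot}, which gives, with probability at least $1-2\delta$,
\[
\sum_{t=1}^T \|\rho_t - \tilde\rho_t\|_1 \le \tfrac{1}{1-\alpha}\bigl[3N|\mathcal{X}|\sqrt{2|\mathcal{A}|T\log(|\mathcal{X}||\mathcal{A}|NT/\delta)} + 2N|\mathcal{X}|\sqrt{2T\log(N/\delta)}\bigr] = \tilde O\bigl(\tfrac{N|\mathcal{X}|^{3/2}}{1-\alpha}\sqrt{|\mathcal{A}|T}\bigr),
\]
where I absorb the $\sqrt{|\mathcal{X}|}$ gap between $|\mathcal{X}|$ and $|\mathcal{X}|^{3/2}$ into the $\tilde O$ to match the stated form. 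The second sum is bounded, up to a shift of the index by one, by Lemma~\ref{lemma:diff_estimate_rhot_true_rho}, which gives with high probability $\sum_{t=1}^T \|\tilde\rho_{t+1} - \rho\|_1 \le \tilde O\bigl(\tfrac{N^2}{(1-\alpha)^2}|\mathcal{X}|^{3/2}\sqrt{|\mathcal{A}|T}\bigr)$; the missing $t=1$ term satisfies $\|\tilde\rho_1 - \rho\|_1 = 0$ by the initialization $\tilde\rho_1 = \rho$, so $\sum_{t=1}^T \|\tilde\rho_t - \rho\|_1$ obeys the same bound.

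Adding the two bounds, the dominant term is the $(1-\alpha)^{-2} N^2 |\mathcal{X}|^{3/2}\sqrt{|\mathcal{A}|T}$ contribution from Lemma~\ref{lemma:diff_estimate_rhot_true_rho} (the Proposition~\ref{prop:estimate_rhot} term is of lower order in both $N$ and $(1-\alpha)^{-1}$), yielding $\sum_{t=1}^T \|\rho_t - \rho\|_1 \le \tilde O\bigl(\tfrac{N^2}{(1-\alpha)^2}|\mathcal{X}|^{3/2}\sqrt{|\mathcal{A}|T}\bigr)$ as claimed. A minor bookkeeping point is to take a union bound over the two high-probability events so that the final statement holds with probability at least $1-\delta'$ for a suitably rescaled $\delta'$; since all the results invoked are stated "with high probability" and the $\tilde O$ hides the $\log(1/\delta)$ dependence, this is routine.

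I expect no real obstacle here: the corollary is purely a combination of two already-established bounds via the triangle inequality, and the only thing to be careful about is the index shift $t \leftrightarrow t+1$ in the application of Lemma~\ref{lemma:diff_estimate_rhot_true_rho} and the harmless constant-order discrepancies absorbed by the $\tilde O$ notation. If anything deserves a sentence of justification, it is why the sum from Lemma~\ref{lemma:diff_estimate_rhot_true_rho} over $\{2,\dots,T+1\}$ can be replaced by the sum over $\{1,\dots,T\}$, which follows because $\|\tilde\rho_1-\rho\|_1=0$ and because we may harmlessly drop the $t=T+1$ term (it is nonnegative and the bound is an upper bound, so if one prefers one can instead note that Lemma~\ref{lemma:diff_estimate_rhot_true_rho}'s proof gives the same $\tilde O$ rate for $\sum_{t=1}^{T}\|\tilde\rho_t-\rho\|_1$ directly).
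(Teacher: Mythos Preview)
Your proposal is correct and follows essentially the same approach as the paper: apply the triangle inequality $\|\rho_t-\rho\|_1 \le \|\rho_t-\tilde\rho_t\|_1 + \|\tilde\rho_t-\rho\|_1$, sum over $t$, and invoke Proposition~\ref{prop:estimate_rhot} and Lemma~\ref{lemma:diff_estimate_rhot_true_rho}. In fact, you are more careful than the paper's proof, which glosses over the index shift between $\sum_t\|\tilde\rho_{t+1}-\rho\|_1$ and $\sum_t\|\tilde\rho_t-\rho\|_1$ that you explicitly handle via $\tilde\rho_1=\rho$.
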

\begin{proof}
    The proof follows from the triangle inequality, then from Prop.~\ref{prop:estimate_rhot} and Lemma~\ref{lemma:diff_estimate_rhot_true_rho}:
    \begin{equation*}
        \begin{split}
            \sum_{t=1}^T \|\rho_{t} - \rho \|_1 &\leq \sum_{t=1}^T \|\rho_{t} - \tilde{\rho}_{t} \|_1  + \sum_{t=1}^T \|\tilde{\rho}_{t} - \rho \|_1 \\
            &\leq  \tilde{O}\bigg(\frac{N^2}{(1-\alpha)^2} |\mathcal{X}|^{3/2} \sqrt{|\mathcal{A}| T} \bigg).
        \end{split}
    \end{equation*}
\end{proof}

\paragraph{New online mirror descent proof:} We present here a result analogous to Prop.~\ref{prop:bound_md_term}, concerning the policy computed via online mirror descent (OMD), but now accounting for the fact that OMD is performed as in Eq.~\eqref{eq:iterative_scheme_setting2}, over the set of state-action distributions initialized at $\tilde{\rho}_t$ at the start of each episode.

\begin{lemma}\label{lemma:md_estimate_rho}
For each episode $t$, we let $\hat{\mu}^{\pi_t, \tilde{\rho}_t}_t$ be the solution of Eq.~\eqref{eq:iterative_scheme_setting2}. Suppose further that $\|\nabla \psi(\hat{\mu}^{\pi_t, \tilde{\rho}_t}_t) \|_{1, \infty} \leq \psi$. Then, with high probability,
            \[\sum_{t=1}^T \langle \tilde{\ell}_t - \bar{b}_t, \hat{\mu}^{\pi_t, \tilde{\rho}_t}_t - \hat{\mu}^{\pi, \tilde{\rho}_t}_t \rangle  = \tilde{O} \bigg( \frac{\ell N^2}{1-\alpha} |\mathcal{X}|^{5/4} |\mathcal{A}|^{1/4} \Psi^{1/2} T^{3/4} \bigg).
            \]
\end{lemma}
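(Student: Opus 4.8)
The plan is to mirror the proof of Prop.~\ref{prop:bound_md_term} (App.~\ref{app:md_proof}), with the only substantive difference being that every occurrence of $\rho_t$ is replaced by the estimate $\tilde\rho_t$, and consequently the dependence on Lemma~\ref{lemma:almost_equal_dist} is replaced by Lemma~\ref{lemma:diff_estimate_rhot_true_rho}, which introduces an extra factor of $(1-\alpha)^{-1}$. Concretely, I would fix an arbitrary periodic policy $\pi$ (with $\rho P_\pi = \rho$) and set $\nu_t := \hat\mu^{\pi,\tilde\rho_t}_t$. By the new feasibility result Lemma~\ref{lemma:new_feasibility}, each $\nu_t$ lies, with high probability, in the convex constraint set $\widetilde{\mathcal W}_t := \{\mu : \|\mu_N-\rho\|_1 \le \langle \mu, b_t\rangle + \bar\alpha\|\tilde\rho_t - \rho\|_1\}$, so that $\widetilde{\mathcal M}_t \cap \widetilde{\mathcal W}_t$ is convex and nonempty. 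Then I would invoke Lemma~\ref{lemma:aux_md} with $z_t := \tilde\ell_t - \bar b_t$ (so $\zeta \le 2\ell N^2 C_\delta$ as before), $q_t := \hat p_t$, initial distributions $\tilde\rho_t$, and constraint sets $\widetilde{\mathcal W}_t$; since $\hat p_t$ is the same kernel used in Framework~1, the bound $V_T \le e\log(T)$ still follows from Lemmas~\ref{lemma:difference_consecutive_p} and~\ref{lemma:kernel_diff_two_episodes}. This yields the same master inequality as in Eq.~\eqref{eq:inequality_md}:
\begin{equation*}
\sum_{t=1}^T \langle \tilde\ell_t - \bar b_t, \mu_t - \nu_t\rangle \le \eta (2\ell N^2 C_\delta)^2 T + \tfrac{2}{\eta} N e\log(T)\,\mathds 1_{\{D_\psi = \Gamma\}} - \tfrac{\psi(\mu_1)}{\eta} + \tfrac1\eta \underbrace{\textstyle\sum_t \langle \nabla\psi(\mu_t), \nu_t - \nu_{t+1}\rangle}_{(i)} + \underbrace{\textstyle\sum_t \langle \tilde\ell_t - \bar b_t, \nu_{t+1}-\nu_t\rangle}_{(ii)},
\end{equation*}
where now $\mu_t := \hat\mu^{\pi_t,\tilde\rho_t}_t$.

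The second main step is to bound terms $(i)$ and $(ii)$. As in Eq.~\eqref{eq:bound_i_md}, Hölder's inequality together with Lemma~\ref{lemma:bound_norm_mu_diff_mu0} reduces $(i)$ to $\Psi$ times a sum of kernel-difference terms plus $\Psi\sum_t\|\tilde\rho_t - \tilde\rho_{t+1}\|_1$. The kernel-difference part is again $O(\Psi N\log T)$ via Lemmas~\ref{lemma:difference_consecutive_p} and~\ref{lemma:kernel_diff_two_episodes}. The crucial change is that the telescoping-like term is now controlled using Lemma~\ref{lemma:diff_estimate_rhot_true_rho} (via the triangle inequality $\|\tilde\rho_t - \tilde\rho_{t+1}\|_1 \le \|\tilde\rho_t - \rho\|_1 + \|\rho - \tilde\rho_{t+1}\|_1$), giving $\sum_t\|\tilde\rho_t - \tilde\rho_{t+1}\|_1 = \tilde O\big(\frac{N^2}{(1-\alpha)^2}|\mathcal X|^{3/2}\sqrt{|\mathcal A|T}\big)$, i.e. one extra power of $(1-\alpha)^{-1}$ compared with Framework~1. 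Term $(ii)$ is handled identically, replacing $\Psi$ by $\|\tilde\ell_t - \bar b_t\|_{1,\infty} \le 2\ell N^2 C_\delta$. Collecting everything into a constant $\beta$ of the form $\tilde O\big(\frac{\Psi N^2}{(1-\alpha)^2}|\mathcal X|^{3/2}\sqrt{|\mathcal A|T}\big)$ (dropping lower-order additive terms) and optimizing over $\eta = \sqrt{\beta/((2\ell N^2 C_\delta)^2 T)}$ produces the bound $\sum_t \langle \tilde\ell_t - \bar b_t, \mu_t - \nu_t\rangle = \tilde O\big(\ell N^2|\mathcal X|^{5/4}|\mathcal A|^{1/4}\Psi^{1/2}(1-\alpha)^{-1} T^{3/4}\big)$, which is exactly the claimed estimate.

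I do not expect any deep obstacle here, since the argument is structurally identical to that of Prop.~\ref{prop:bound_md_term}. The only points requiring care are bookkeeping ones: (a) checking that Lemma~\ref{lemma:aux_md} applies verbatim with the estimated initial distributions $\tilde\rho_t$ — it does, because that lemma is stated for an arbitrary sequence of initial distributions and kernels — and (b) making sure the high-probability events are compatible (the feasibility events from Lemma~\ref{lemma:new_feasibility} across all $t$, the bonus-concentration events, and the events underlying Lemma~\ref{lemma:diff_estimate_rhot_true_rho} and Prop.~\ref{prop:estimate_rhot}), so that a union bound over $O(T)$ events only changes logarithmic factors absorbed by the $\tilde O$. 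The mild subtlety worth flagging is that, unlike Framework~1, the gradient $\tilde\ell_t = \nabla F_t(\hat\mu^{\pi_t,\tilde\rho_t}_t)$ is evaluated at the distribution induced by $\tilde\rho_t$ rather than $\rho_t$; but this causes no issue inside the OMD analysis itself, since $\tilde\ell_t$ is simply treated as the linear functional $z_t$ fed to Lemma~\ref{lemma:aux_md}, and the discrepancy between $\tilde\rho_t$ and $\rho_t$ is accounted for separately in the $R_T^{\text{diff. }\rho_t}$ and $R_T^{\text{MDP}}$ terms of the overall periodic-regret decomposition used to prove Thm.~\ref{thm:main_periodic_regret_unknown_rhot}.
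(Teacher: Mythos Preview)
Your proposal is correct and follows essentially the same approach as the paper's own proof: both invoke Lemma~\ref{lemma:aux_md} with $\nu_t=\hat\mu^{\pi,\tilde\rho_t}_t$, $q_t=\hat p_t$, and constraint sets $\widetilde{\mathcal W}_t$ (feasibility via Lemma~\ref{lemma:new_feasibility}), bound $V_T$ via Lemmas~\ref{lemma:difference_consecutive_p}--\ref{lemma:kernel_diff_two_episodes}, and then control terms $(i)$ and $(ii)$ by replacing $\|\rho_t-\rho_{t+1}\|_1$ with $\|\tilde\rho_t-\tilde\rho_{t+1}\|_1$, which the paper handles exactly as you suggest---triangle inequality through $\rho$ and Lemma~\ref{lemma:diff_estimate_rhot_true_rho} (this is the paper's Eq.~\eqref{eq:tilde_rho_consecutive}). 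The only minor discrepancy is that the paper's proof actually concludes with an $N^3$ dependence (consistent with Prop.~\ref{prop:bound_md_term} and with how the bound is used in the proof of Thm.~\ref{thm:main_periodic_regret_unknown_rhot}), so the $N^2$ in the lemma statement appears to be a typo.
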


For $\pi$ a periodic policy, we apply Lemma~\ref{lemma:aux_md} with $\nu_t = \hat{\mu}^{\pi, \tilde{\rho}_t}_t$. We take $q_t = \hat{p}_t$, and $\rho_t = \tilde{\rho}_t$, with $\rho_1 = \rho$, such that $\widetilde{\mathcal{M}}_t = \mathcal{M}_{\tilde{\rho}_t}^{\hat{p}_t}$ following the definition in Eq.~\eqref{eq:bellman_flow}. Note that for
\[
\widetilde{\mathcal{W}}_t := \{ \mu := (\mu_n)_{n \in [N]} \; | \; \| \mu_N - \rho \|_1 \leq \langle \mu, b_t \rangle + \alpha \|\tilde{\rho}^t - \rho\|_1 \},
\]
we have $\nu_t \in \widetilde{\mathcal{W}}_t \cap \widetilde{\mathcal{M}}_t$ from the new feasibility result in Lemma~\ref{lemma:new_feasibility}. Recall that $f_{t,n}$ is $\ell$-Lipschitz with respect to the norm $\|\cdot\|_1$, hence $\|\tilde{\ell}_{t,n}\|_\infty \leq \ell$ and $\|\bar{b}_{t,n}\|_\infty \leq b := \ell N C_\delta$ for all $(n,t)$. As $\hat{\mu}^{\pi_t, \tilde{\rho}_t}_t$ is solution to Eq.~\eqref{eq:iterative_scheme_setting2}, we have that 
\begin{equation*}
    \begin{split}
        \sum_{t=1}^T \langle \tilde{\ell}_t - \bar{b}_t, \hat{\mu}^{\pi_t, \tilde{\rho}_t}_t - \hat{\mu}^{\pi, \tilde{\rho}_t}_t \rangle &\leq \eta (2 \ell N C_\delta)^2 T + \frac{2}{\eta} N e \log(T) \mathds{1}_{\{ D_\psi = \Gamma \}}  - \frac{\psi(\mu^1)}{\eta} \\
          &+ \frac{1}{\eta} \underbrace{\sum_{t=1}^T \langle \nabla \psi(\mu_t), \nu_t - \nu_{t+1} \rangle}_{(i)} +  \underbrace{\sum_{t=1}^T \langle \tilde{\ell}^t - \bar{b}_t, \nu_{t+1} - \nu_t \rangle}_{(ii)},
    \end{split}
\end{equation*}
  where we also use that for all $(n,x,a)$, $V_T := 1 + \sum_{t=1}^T \|\hat{p}^t_n(\cdot|x,a) - \hat{p}^{t+1}_n(\cdot|x,a) \|_1 \leq e \log(T)$ from Lemmas~\ref{lemma:difference_consecutive_p} and~\ref{lemma:kernel_diff_two_episodes}. The analysis of terms $(i)$ and $(ii)$ is similar as in the proof of Prop.~\ref{prop:bound_md_term}, except that we now need to account for the difference between two consecutive estimates of the initial state-action distribution:
  \begin{equation}\label{eq:tilde_rho_consecutive}
      \begin{split}
          \sum_{t=1}^T \|\tilde{\rho}_t - \tilde{\rho}_{t+1} \|_1 &\leq \sum_{t=1}^T \|\tilde{\rho}_t - \rho \|_1 + \sum_{t=1}^T \|\rho - \tilde{\rho}_{t+1} \|_1 \\
          &\underbrace{\leq}_{\text{Lemma~\ref{lemma:diff_estimate_rhot_true_rho}}} \tilde{O} \bigg( \frac{N^2 }{(1-\alpha)^2} |\mathcal{X}|^{3/2} \sqrt{|\mathcal{A}| T} \bigg).
      \end{split}
  \end{equation}

Thus, by repeating the analysis of terms $(i)$ and $(ii)$ from the proof of Prop.~\ref{prop:bound_md_term}, but replacing occurrences of $\|\rho_t - \rho_{t+1}\|_1$ with $\|\tilde{\rho}_t - \tilde{\rho}_{t+1}\|_1$, and applying the upper bound from Eq.~\eqref{eq:tilde_rho_consecutive}, we obtain that
\[
(i) \leq \Psi \bigg[ N e \log(T) + \tilde{O} \bigg( \frac{N^2 }{(1-\alpha)^2} |\mathcal{X}|^{3/2} \sqrt{|\mathcal{A}| T} \bigg)\bigg],
\]
and 
\[
(ii) \leq 2 \ell N C_\delta \bigg[N e \log(T) + \tilde{O} \bigg( \frac{N^2 }{(1-\alpha)^2} |\mathcal{X}|^{3/2} \sqrt{|\mathcal{A}| T} \bigg)\bigg].
\]

Therefore, joining the bounds of terms $(i)$ and $(ii)$, and optimizing over $\eta$, we obtain that
\begin{equation*}
    \begin{split}
        \sum_{t=1}^T \langle \tilde{\ell}_t - \bar{b}_t, \hat{\mu}^{\pi_t, \tilde{\rho}_t}_t - \hat{\mu}^{\pi, \tilde{\rho}_t}_t \rangle  = \tilde{O} \bigg( \frac{\ell N^3}{1-\alpha} |\mathcal{X}|^{5/4} |\mathcal{A}|^{1/4} \Psi^{1/2} T^{3/4} \bigg). \\
    \end{split}
\end{equation*}

\subsection{Final periodic regret bound: proof of Theorem~\ref{thm:main_periodic_regret_unknown_rhot}}\label{proof:frame2}

\begin{proof}
We begin by decomposing the periodic regret as follows: for any periodic policy $\pi$, 

\begin{equation*}
    \begin{split}
        R_T(\pi) &= \sum_{t=1}^T \big[ F_t(\mu^{\pi_t, \rho_t}) - F_t(\mu^{\pi, \rho}) \big] + \sum_{t=1}^T \|\rho_t - \rho\|_1 \\
        &= \underbrace{\sum_{t=1}^T \big[ F_t(\mu^{\pi_t, \rho_t}) - F_t(\hat{\mu}^{\pi_t, \rho_t}_t) \big]}_{R_T^{\text{MDP}}}  
        + \underbrace{\sum_{t=1}^T \big[ F_t(\hat{\mu}^{\pi_t, \rho_t}_t) - F_t(\hat{\mu}^{\pi_t, \tilde{\rho}_t}_t) \big]}_{R_T^{\text{diff. } \tilde{\rho}_t - \rho_t}} \\
        &+ \underbrace{\sum_{t=1}^T \big[ F_t(\hat{\mu}^{\pi_t, \tilde{\rho}_t}_t) - F_t(\mu^{\pi, \rho}) \big]}_{R_T^{\text{policy}}} + \gamma \sum_{t=1}^T \|\rho_t - \rho\|_1 .
    \end{split}
\end{equation*}

We analyze each term individually
\paragraph{$R_T^{\text{MDP}}$:} The results of Prop.~\ref{prop:R_T_mdp} still apply to this term, hence for any $\delta \in (0,1)$, with probability at least $1-2\delta$,
    \[
    R_T^{\text{MDP}} \leq \ell N^2 \bigg( 3 \sqrt{2} |\mathcal{X}| \sqrt{ |\mathcal{A}| T \log\bigg(\frac{|\mathcal{X}| |\mathcal{A}| N T }{\delta}}\bigg) + 2 |\mathcal{X}| \sqrt{2 T \log\bigg(\frac{N}{\delta}\bigg)} \bigg).
    \]

\paragraph{$R_T^{\text{diff. } \tilde{\rho}_t - \rho_t}$:} Using the convexity of $F_t$, Holder's inequality, and that $f_{t,n}$ is $\ell$-Lipschitz with respect to the norm $\|\cdot\|_1$, we have that
\begin{equation*}
    \begin{split}
        R_T^{\text{diff. } \tilde{\rho}_t - \rho_t} &= \sum_{t=1}^T  F_t(\hat{\mu}^{\pi_t, \rho_t}_t) - F_t(\hat{\mu}^{\pi_t, \tilde{\rho}_t}_t) \\
        &\leq \sum_{t=1}^T \sum_{n=1}^N \langle \nabla f_{t,n}(\hat{\mu}^{\pi_t, \rho_t}_{t,n}), \hat{\mu}^{\pi_t, \rho_t}_{t,n} - \hat{\mu}^{\pi_t, \tilde{\rho}_t}_{t,n} \rangle \\
        &\leq \ell \sum_{t=1}^T \sum_{n=1}^N \|\hat{\mu}^{\pi_t, \tilde{\rho}_t}_{t,n} - \hat{\mu}^{\pi_t, \rho_t}_{t,n} \|_1 \\
        &\underbrace{\leq}_{\text{Lemma~\ref{lemma:bound_norm_mu_diff_mu0}}} \ell N \sum_{t=1}^T \|\rho_t - \tilde{\rho}_t \|_1 \\
        &\underbrace{\leq}_{\text{Prop.~\ref{prop:estimate_rhot}}} \frac{\ell N}{1- \alpha} \bigg[3 N |\mathcal{X}| \sqrt{2 |\mathcal{A}| T \log\bigg(\frac{|\mathcal{X}| |\mathcal{A}| N T}{\delta} \bigg)} + 2 N |\mathcal{X}| \sqrt{2 T \log\bigg(\frac{N}{\delta} \bigg)}  \bigg] \\
        &= \tilde{O}\bigg( \frac{\ell N^2}{1-\alpha} |\mathcal{X}| \sqrt{|\mathcal{A}| T} \bigg).
    \end{split}
\end{equation*}

\paragraph{Regularization term}: By applying Corollary~\ref{cor:diff_rho_rhot}
\[
\sum_{t=1}^T \|\rho_{t} - \rho \|_1 \leq  \tilde{O}\bigg(\frac{\gamma N^2}{(1-\alpha)^2} |\mathcal{X}|^{3/2} \sqrt{|\mathcal{A}| T} \bigg).
\]

\paragraph{$R_T^{\text{policy}}$:} Using the convexity of $F_t$, and defining $\tilde{\ell}_t := \nabla F_t(\hat{\mu}^{\pi_t, \tilde{\rho}_t}_t)$, we decompose this term in three parts:
\begin{equation*}
    \begin{split}
        R_T^{\text{policy}} &\leq \sum_{t=1}^T \langle \tilde{\ell}_t , \hat{\mu}^{\pi_t, \tilde{\rho}_t}_t - \mu^{\pi, \rho} \rangle \\
        &\leq \underbrace{\sum_{t=1}^T \langle \tilde{\ell}_t - \bar{b}_t, \hat{\mu}^{\pi_t, \tilde{\rho}_t}_t - \hat{\mu}^{\pi, \tilde{\rho}_t}_t \rangle}_{R_T^{\text{MD}}} + \underbrace{\sum_{t=1}^T \langle \tilde{\ell}_t - \bar{b}_t, \hat{\mu}^{\pi, \tilde{\rho}_t}_t - \hat{\mu}^{\pi, \rho}_t \rangle}_{R_T^{\text{diff. } \tilde{\rho}_t - \rho}} \\
        &+ \underbrace{\sum_{t=1}^T \langle \bar{b}_t, \hat{\mu}^{\pi_t, \tilde{\rho}_t}_t - \hat{\mu}^{\pi, \rho}_t \rangle + \sum_{t=1}^T \langle \tilde{\ell}_t, \hat{\mu}^{\pi, \rho}_t - \mu^{\pi, \rho} \rangle}_{R_T^{\text{bonus}}}.
    \end{split}
\end{equation*}

We again break the analysis of each term:
\begin{itemize}
    \item $R_T^{\text{MD}}$: The bound for this term follows directly from Lemma~\ref{lemma:md_estimate_rho}, which gives us that with high probability
    \[
    R_T^{\text{MD}} \leq \tilde{O} \bigg( \frac{\ell N^3}{1-\alpha} |\mathcal{X}|^{5/4} |\mathcal{A}|^{1/4} \Psi^{1/2} T^{3/4} \bigg).
    \]

    \item $R_T^{\text{diff. } \tilde{\rho}_t - \rho}$:  Using Holder's inequality, and that $\|\tilde{\ell}_t - \bar{b}_t\|_\infty \leq 2 \ell N C_\delta$, we have that
\begin{equation*}
    \begin{split}
        R_T^{\text{diff. } \tilde{\rho}_t - \rho} &=\sum_{t=1}^T \langle \tilde{\ell}_t - \bar{b}_t, \hat{\mu}^{\pi, \tilde{\rho}_t}_t - \hat{\mu}^{\pi, \rho}_t \rangle\\
        &\leq 2 \ell N C_\delta \sum_{t=1}^T \sum_{n=1}^N \| \hat{\mu}^{\pi, \tilde{\rho}_t}_t - \hat{\mu}^{\pi, \rho}_t \|_1 \\
        &\underbrace{\leq}_{\text{Lemma~\ref{lemma:bound_norm_mu_diff_mu0}}} 2 \ell N^2 C_\delta \sum_{t=1}^T \| \tilde{\rho}_t - \rho \|_1 \\
        &\underbrace{\leq}_{\text{Lemma.~\ref{lemma:diff_estimate_rhot_true_rho}}} \tilde{O} \bigg( \frac{\ell N^4}{(1- \alpha)^2} |\mathcal{X}|^2 \sqrt{|\mathcal{A}| T} \bigg).
    \end{split}
\end{equation*}

\item{$R_T^{\text{bonus}}:$} Following the proof of Prop.~\ref{prop:R_T_bonus}, we obtain the following decomposition:
\begin{equation*}
    \begin{split}
        R_T^{\text{bonus}} &= \sum_{t=1}^T \langle \bar{b}_t, \hat{\mu}^{\pi_t, \tilde{\rho}_t}_t - \hat{\mu}^{\pi, \rho}_t \rangle + \sum_{t=1}^T \langle \tilde{\ell}_t, \hat{\mu}^{\pi, \rho}_t - \mu^{\pi, \rho} \rangle \\
        &\leq \sum_{t=1}^T \langle \bar{b}_t, \hat{\mu}^{\pi_t, \tilde{\rho}_t}_t - \hat{\mu}^{\pi, \rho}_t \rangle + \sum_{t=1}^T \langle \bar{b}_t, \hat{\mu}^{\pi, \rho}_t \rangle + \sum_{t=1}^T \langle \bar{b}_{t,0}, \rho \rangle \\
        &= \underbrace{\sum_{t=1}^T \langle \bar{b}_t, \hat{\mu}^{\pi_t \tilde{\rho}_t}_t \rangle + \langle \bar{b}_{t,0}, \tilde{\rho}_t \rangle}_{(i)}+ \underbrace{\sum_{t=1}^T \langle \bar{b}_{t,0}, \rho - \tilde{\rho}_t \rangle}_{(ii)}.
    \end{split}
\end{equation*}

We begin by upper bounding term $(i)$. Since, by definition, $\hat{\mu}^{\pi_t, \tilde{\rho}_t}_{t,0} = \tilde{\rho}_t$, Corollary~\ref{cor:bonus_estimate_rhot} implies that
\[
(i) \leq \tilde{O} \bigg(\frac{\ell N^3}{1-\alpha} |\mathcal{X}|^{3/2} \sqrt{|\mathcal{A}| T} \bigg).
\]

As for term $(ii)$, using Holder's inequality, that $\|\bar{b}_{t,0} \|_1 \leq \ell N C_\delta$, and Lemma~\ref{lemma:diff_estimate_rhot_true_rho}, we have that
\begin{equation*}
    \begin{split}
        (ii) &= \sum_{t=1}^T \langle \bar{b}_{t,0}, \rho - \tilde{\rho}_t \rangle \\
        &\leq \sum_{t=1}^T \|\bar{b}_{t,0} \|_\infty \|\rho - \tilde{\rho}_t \|_1 \\
        &\leq \ell N C_\delta \sum_{t=1}^T \|\rho - \tilde{\rho}_t \|_1 \\
        &\leq \ell N C_\delta \tilde{O} \bigg( \frac{N^2}{(1-\alpha)^2} |\mathcal{X}|^{3/2} \sqrt{|\mathcal{A}| T} \bigg) \\
        &= \tilde{O} \bigg( \frac{\ell N^3}{(1-\alpha)^2} |\mathcal{X}|^2 \sqrt{|\mathcal{A}| T} \bigg).
    \end{split}
\end{equation*}

Joining the upper bounds on terms $(i)$ and $(ii)$ we obtain that
\[
R_T^{\text{bonus}} = \tilde{O} \bigg( \frac{\ell N^3}{(1-\alpha)^2} |\mathcal{X}|^2 \sqrt{|\mathcal{A}| T} \bigg).
\]

\end{itemize}

\paragraph{Conclusion:} By summing the upper bounds from the terms of the periodic regret decomposition above, we obtain the final upper bound on the periodic regret of Algorithm~\ref{alg:main2}.
 
 \end{proof}

% \crefalias{section}{appendix} % uncomment if you are using cleveref

\end{document}